\DeclareMathOperator{\res}{Res}
\DeclareMathOperator{\ind}{Ind}
\DeclareMathOperator{\Soc}{Soc}
\DeclareMathOperator{\Rad}{Rad}
\DeclareMathOperator{\End}{End}
\DeclareMathOperator{\im}{im}
\DeclareMathOperator{\Mat}{Mat}
\begin{document}
\newtheorem{defi}{Definition}[section]
\newtheorem{rem}[defi]{Remark}
\newtheorem{prop}[defi]{Proposition}
\newtheorem{ques}[defi]{Question}
\newtheorem{lemma}[defi]{Lemma}
\newtheorem{cor}[defi]{Corollary}
\newtheorem{thm}[defi]{Theorem}
\newtheorem{expl}[defi]{Example}
\renewcommand{\proofname}{\textsl{\textbf{Proof}}}

\begin{center}
{\bf\Large The vertices and sources of the natural simple module \\
\smallskip for the alternating group in even characteristic}

\bigskip
\textrm{Susanne Danz and J\"urgen M\"uller}

\medskip
\begin{abstract}
\noindent
For $n\geq 5$ the natural permutation module for the alternating
group $\mathfrak{A}_n$ has a unique non-trivial composition factor,
being called its natural simple module. We determine the vertices 
and sources of the natural simple $\mathfrak{A}_n$-module
over fields of characteristic $2$. 

\smallskip\noindent
Mathematics Subject Classification: 20C20, 20C30
\end{abstract}
\end{center}


\section{Introduction and result}
\normalfont

One of the leading themes in the representation
theory of finite groups is the question how far the
$p$-modular representation theory of a given group is determined
by local data, that is by its $p$-subgroups and their normalisers.
From this viewpoint, it is immediate to ask for the vertices 
and sources of the building blocks of all modules, namely the simple ones. 
Although not too much seems to be known in general, in recent years 
the picture has changed considerably for the symmetric group 
$\mathfrak{S}_n$ on $n$ letters, see for example the survey \cite{DK}.
Now it is natural to ask how the vertices and sources of the simple 
$\mathfrak{S}_n$-modules and those of the constituents of
their restrictions to the alternating group $\mathfrak{A}_n$ 
are related, and here in particular the natural $\mathfrak{S}_n$-module 
immediately springs to mind. The vertices and sources of
the natural $\mathfrak{S}_n$-module have been determined in \cite{MuZ},
and indeed the latter paper is the starting point
of the present work. As it turns out, the case of even
characteristic is by far the most interesting one.

\medskip
Hence, let $F$ be an algebraically closed field of characteristic $2$, 
let $n\geq 3$, and let $D:=D^{(n-1,1)}$ be the natural simple 
$F\mathfrak{S}_n$-module, that is, $D$ is the unique non-trivial 
composition factor of the 
natural permutation $F\mathfrak{S}_n$-module 
$M:=M^{(n-1,1)}:=\ind_{\mathfrak{S}_{n-1}}^{\mathfrak{S}_n}(F)$.
Now $\res_{\mathfrak{A}_n}^{\mathfrak{S}_n}(D)$ 
splits into two non-isomorphic simple modules 
$E^{(n-1,1)}_{\pm}$ if $n<5$, and otherwise 
$\res_{\mathfrak{A}_n}^{\mathfrak{S}_n}(D)=:E^{(n-1,1)}_0$ 
remains simple (cf. \cite{B}). 
In consequence of Mackey's Decomposition Theorem, 
we deduce that $E^{(n-1,1)}_0$ is the unique non-trivial composition 
factor of the natural permutation $F\mathfrak{A}_n$-module
$\ind_{\mathfrak{A}_{n-1}}^{\mathfrak{A}_n}(F)\cong
\res_{\mathfrak{A}_n}^{\mathfrak{S}_n}(M)$ if $n\geq 5$,
and we call $E^{(n-1,1)}_0$ the natural simple $F\mathfrak{A}_n$-module.
If $n<5$ then $\ind_{\mathfrak{A}_{n-1}}^{\mathfrak{A}_n}(F)$ has 
two non-trivial composition factors, $E^{(2,1)}_{\pm}$ and
$E^{(3,1)}_{\pm}$, respectively, also
called the natural simple $F\mathfrak{A}_3$-
and $F\mathfrak{A}_4$-modules, respectively.

\medskip
Our main aim now is to prove the following theorem.
Part of the assertions, listed for the sake of completeness,
are fairly immediate, and already covered in the subsequent remark.
There we also comment on the odd characteristic case,
and recall the results on the vertices and sources of the module $D$ 
from \cite{MuZ}.

\begin{thm}\label{thm:main}
Let $F$ be an algebraically closed field of characteristic $2$, 
and $n\geq 3$.
Let $E$ be the natural simple $F\mathfrak{A}_n$-module, and
let further $Q$ be a vertex of $E$.
Then the following hold:

\begin{enumerate}
\item[(i)]
If $n$ is odd then $Q$ is conjugate to a Sylow $2$-subgroup
of $\mathfrak{A}_{n-3}$, and $E$ has trivial source. 

\item[(ii)]
If $n>6$ is even then $Q$ is a Sylow $2$-subgroup of $\mathfrak{A}_n$, 
and $\res_Q^{\mathfrak{A}_n}(E)$ is a source of $E$.

\item[(iii)]
If $n=4$ then $Q$ is the Sylow $2$-subgroup of $\mathfrak{A}_4$,
and $E$ has trivial source. If $n=6$ then 
$Q$ is conjugate to
$(\mathfrak{S}_2\times\mathfrak{S}_2\times\mathfrak{S}_2)\cap\mathfrak{A}_6$,
and $E$ has a source of dimension $2$. 
\end{enumerate}
\end{thm}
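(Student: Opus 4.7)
The strategy is to transfer the vertex--source data for the natural simple $F\mathfrak{S}_n$-module $D$ established in \cite{MuZ} across the index-$2$ inclusion $\mathfrak{A}_n\leq\mathfrak{S}_n$, and then to dispose of the remaining small cases by hand. For $n\geq 5$ we identify $E$ with the simple module $\res_{\mathfrak{A}_n}^{\mathfrak{S}_n}(D)$. Let $P$ be a vertex of $D$ and $W$ a source, so that $D\mid\ind_P^{\mathfrak{S}_n}(W)$. Applying Mackey to $\res_{\mathfrak{A}_n}^{\mathfrak{S}_n}\ind_P^{\mathfrak{S}_n}(W)$ and observing that there is a single $(\mathfrak{A}_n,P)$-double coset whenever $P\not\leq\mathfrak{A}_n$, one obtains
\begin{equation*}
E\,\Bigm|\,\ind_{P\cap\mathfrak{A}_n}^{\mathfrak{A}_n}\bigl(\res_{P\cap\mathfrak{A}_n}^P(W)\bigr).
\end{equation*}
Hence the vertex $Q$ of $E$ is contained, up to $\mathfrak{A}_n$-conjugacy, in $P\cap\mathfrak{A}_n$, and a source of $E$ arises as a summand of $\res_{P\cap\mathfrak{A}_n}^P(W)$ restricted further to $Q$. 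The work then reduces to proving this containment is an equality and identifying the source.

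For part (i), $n$ odd, \cite{MuZ} provides $W=F$ and $P$ a Sylow $2$-subgroup of $\mathfrak{S}_{n-2}$. Since $P$ contains a transposition, a short $2$-adic count using $v_2((n-2)!)=v_2((n-3)!)$ shows that $P\cap\mathfrak{A}_n$ is conjugate to a Sylow $2$-subgroup of $\mathfrak{A}_{n-3}$, and triviality of the source of $E$ is immediate from $W=F$. To exclude smaller vertices, I would invoke Higman's relative projectivity criterion and rule out a trivial summand in $\res_R^{\mathfrak{A}_n}(E)$ for every maximal subgroup $R<P\cap\mathfrak{A}_n$, which reduces to an inner product of Brauer characters inside the $2$-block of $\mathfrak{A}_n$ containing $E$.

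For part (ii), $n>6$ even, \cite{MuZ} asserts that $P$ is a full Sylow $2$-subgroup of $\mathfrak{S}_n$ and that $W=\res_P^{\mathfrak{S}_n}(D)$ is itself indecomposable. Then $P\cap\mathfrak{A}_n$ is a Sylow $2$-subgroup of $\mathfrak{A}_n$, and $\res_{P\cap\mathfrak{A}_n}^P(W)$ coincides with $\res_{P\cap\mathfrak{A}_n}^{\mathfrak{A}_n}(E)$. The single remaining issue is that this restriction remain indecomposable, for then it is a source and $Q=P\cap\mathfrak{A}_n$. This is the heart of the proof: I expect it to follow from a detailed knowledge of the $P$-structure of $W$ coming out of \cite{MuZ}, for instance its Loewy layers or a convenient $P$-invariant basis, from which one verifies either that $\End_{F(P\cap\mathfrak{A}_n)}(\res(W))$ is local or exhibits a direct obstruction to any non-trivial decomposition.

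Finally, for part (iii) the case $n=4$ is immediate because $E$ is a non-trivial $1$-dimensional $F\mathfrak{A}_4$-module, hence factors through $\mathfrak{A}_4/V_4\cong C_3$, has the Sylow $V_4$ in its kernel, and therefore has vertex $V_4$ and trivial source. The case $n=6$ is genuinely special because the predicted vertex $(\mathfrak{S}_2\times\mathfrak{S}_2\times\mathfrak{S}_2)\cap\mathfrak{A}_6$ is properly contained in a Sylow $2$-subgroup of $\mathfrak{A}_6$, so the argument of (ii) cannot apply. Here I plan an explicit computation inside $\mathfrak{A}_6$, exploiting the well-understood $2$-modular representation theory of this small group, to evaluate the restriction of $E$ to the candidate subgroup and to exhibit the promised $2$-dimensional indecomposable source via Green correspondence.
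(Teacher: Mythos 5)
The central step of part (ii) is missing, because the reduction you propose is not valid. You claim that once $\res_{P\cap\mathfrak{A}_n}^{\mathfrak{A}_n}(E)$ is shown to be indecomposable (with $P\cap\mathfrak{A}_n=Q_n$ a Sylow $2$-subgroup of $\mathfrak{A}_n$), it follows that $Q=Q_n$ and that this restriction is a source. Indecomposability of the restriction to a Sylow $2$-subgroup only yields that a vertex of $E$ is also a vertex of $\res_{Q_n}^{\mathfrak{A}_n}(E)$; an indecomposable module over a $2$-group can perfectly well have a proper vertex (any module induced from a proper subgroup is indecomposable by Green's Indecomposability Theorem). The case $n=6$ of the very theorem you are proving is a counterexample to your implication: there $\res_{Q_6}^{\mathfrak{A}_6}(E)$ is indecomposable, yet the vertex is the proper subgroup $(\mathfrak{S}_2\times\mathfrak{S}_2\times\mathfrak{S}_2)\cap\mathfrak{A}_6<Q_6$ (Remark~\ref{rem:n6}). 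So after indecomposability one must still exclude relative projectivity with respect to every maximal subgroup $R<Q_n$ containing a vertex, and this is where essentially all of the work lies: as in Remark~\ref{rem:strategy}, one first forces $\langle\Phi(Q_n),Q_{n-4},Q\rangle\leq R$ (using the vertex and trivial source of $\res_{\mathfrak{A}_{n-1}}^{\mathfrak{A}_n}(E)$ and Mackey), then Green's Indecomposability Theorem makes $\res_R^{Q_n}(E)$ split into two indecomposables of dimension $\dim(E)/2$, and a contradiction is produced by explicit socle and endomorphism-ring computations on restrictions to the abelian subgroups $X_n\leq\Phi(Q_n)$ and $Y_n'$, with a case division along the $2$-adic expansion of $n$ (Sections~\ref{sec:l1}--\ref{sec:nl2l2}). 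Your outline contains none of this; ``verifying that $\End_{F(P\cap\mathfrak{A}_n)}(\res(W))$ is local'' addresses only indecomposability of the restriction, not the exclusion of smaller vertices.

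Two secondary points. First, for $n>6$ even you hang the argument on \cite[Thm.~1.4(a), 1.5(a)]{MuZ}; that statement is true, but its published proof has a gap (in \cite[Prop.~4.5]{MuZ}), which is why the present paper avoids it and instead reproves it in the appendix with the techniques above, so quoting it without comment is delicate. Second, in part (i) your plan to exclude smaller vertices by ruling out trivial summands of $\res_R^{\mathfrak{A}_n}(E)$ for maximal $R<P\cap\mathfrak{A}_n$ is not the relevant criterion for relative projectivity; the clean argument (Remark~\ref{rem:nat}) bounds the vertex above by a defect group of the block of $E$, namely a Sylow $2$-subgroup $Q_{n-3}$ of $\mathfrak{A}_{n-3}$, and below because the trivial source of $D$ gives $F\mid\res_{Q_{n-3}}^{\mathfrak{A}_n}(E)$ and the trivial $FQ_{n-3}$-module has vertex $Q_{n-3}$. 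Your treatment of $n=4$ is correct, and your plan for $n=6$ is in the spirit of Remark~\ref{rem:n6}, but note that one still needs an argument (the paper uses \cite{E}) to rule out cyclic vertices properly contained in the Klein four group.
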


\rem\label{rem:nat}\normalfont
(a) For the moment, we consider representations over an algebraically
closed field of odd characteristic $p$, and let $p\leq n$.
Then the natural simple module $D$ is not isomorphic to its conjugate
counterpart $D':=D\otimes\text{sgn}$, where $\text{sgn}$ denotes
the sign representation. 
Hence we have the natural simple $F\mathfrak{A}_n$-module 
$E:=\res_{\mathfrak{A}_n}^{\mathfrak{S}_n}(D)\cong 
    \res_{\mathfrak{A}_n}^{\mathfrak{S}_n}(D')$.
By \cite[Thm. 1.2, 1.3]{MuZ},
the vertices of $D$ coincide with the defect groups of its block,
which are contained in $\mathfrak{A}_n$ anyway and thus are the
vertices of $E$ as well. Moreover, $D$ and $E$ have the same sources,
which are trivial if $p\nmid n$, while if $p\mid n$ then are
the restrictions of $D$ to its vertices. This settles the case of odd 
characteristic, hence from now on we again stick to even characteristic.

\medskip
(b) In the case $n=3$, the simple $F\mathfrak{S}_3$-module 
$D^{(2,1)}$ as well as the simple $F\mathfrak{A}_3$-modules 
$E^{(2,1)}_{\pm}$ are projective and have thus trivial sources.
If $n>3$ is odd then $D^{(n-1,1)}\cong S^{(n-1,1)}$, 
where $S^{(n-1,1)}$ is the natural Specht $F\mathfrak{S}_n$-module,
and $E:=E^{(n-1,1)}_0=\res_{\mathfrak{A}_n}^{\mathfrak{S}_n}(D)$ is simple.
By \cite[Thm. 1.2, 1.3]{MuZ}, 
the vertices of $D$ are precisely the defect groups of
its block, which are conjugate to a Sylow $2$-subgroup of 
$\mathfrak{S}_{n-3}$, and $D$ has trivial source. 
Let $Q_{n-3}$ be a Sylow $2$-subgroup of $\mathfrak{A}_{n-3}$.
This is a defect group of the block of $F\mathfrak{A}_n$ containing $E$.
Since $D$ has trivial source, we also deduce that $F$
is a direct summand of $\res_{Q_{n-3}}^{\mathfrak{A}_n}(E)$. 
Therefore, also $E$ has the defect groups 
of its block as vertices and trivial source.

\medskip
(c) If $n$ is even then 
$D$ is contained in the principal block of $F\mathfrak{S}_n$. 
By \cite[Thm. 1.4(b), 1.5(b)]{MuZ}, 
$D^{(3,1)}$ has the Sylow 2-subgroup of $\mathfrak{A}_4$ as
vertex and trivial source.

\medskip
In \cite[Thm. 1.4(a), 1.5(a)]{MuZ} it is stated that 
$D^{(n-1,1)}$ has the Sylow 2-subgroup of $\mathfrak{S}_n$ 
as vertex, and its restriction to the vertex as source, for $n\geq 6$. 
This statement is correct, alone the proof of \cite[Prop. 4.5]{MuZ} 
used in between has a gap. It occurs in the very last line of 
\cite{MuZ}, and we have detected it while writing the present paper. 
Actually, we are able to use the techniques developed here 
to close that gap, and to give a new complete proof of 
\cite[Thm. 1.4(a), 1.5(a)]{MuZ}.  
Thus, in the present paper we only use 
\cite[Thm. 1.2, 1.3, 1.4(b), 1.5(b)]{MuZ}
as well as \cite[L. 3.3, Prop. 4.2]{MuZ},
which all are valid with the proofs given, at least as far as we see,
and we have made sure that we do not refer to any results which
have been proven elsewhere using \cite{MuZ}.

\rem\label{rem:strategy}\normalfont
Now the task is to determine the vertices of the natural simple
$F\mathfrak{A}_n$-module $E:=E^{(n-1,1)}_0$ whenever $n\geq 6$ is even. 
Hence from now on we suppose that $n\geq 4$ is even.
Suppose further that $Q_n$ is a Sylow $2$-subgroup
of $\mathfrak{A}_n$, and that $Q\leq Q_n$ is a 
vertex of $E$.

\medskip
(a) Then our general strategy is to show, firstly, 
that $\tilde{E}:=\res_{Q_n}^{\mathfrak{A}_n}(E)$ is indecomposable. 
This implies that $Q\leq Q_n$ also is a vertex of $\tilde{E}$.
Then, secondly, we assume that $\tilde{E}$ is
relatively projective with respect to some maximal subgroup $R<Q_n$
containing $Q$. In particular, we have $\Phi(Q_n)\leq R$,
where $\Phi(Q_n)$ denotes the Frattini subgroup of $Q_n$.
Moreover, we have
$\res_{\mathfrak{A}_{n-1}}^{\mathfrak{A}_n}(E)\cong E^{(n-2,1)}_0$,
where, by Remark \ref{rem:nat},
the latter has vertex $Q_{n-4}$ and trivial source $T$.
Thus $T$ is a direct summand of $\res_{Q_{n-4}}^{Q_n}(\tilde{E})$,
hence letting $S$ be a source of $\tilde{E}$
we infer that $T$ is a direct summand of
$\res_{Q_{n-4}}^{Q_n}(\ind_{Q}^{Q_n}(S))$.
Hence Mackey's Decomposition Theorem implies that there is $g\in Q_n$ such
that $Q^g\cap Q_{n-4}=Q_{n-4}$ (cf. \cite[L. 4.3.4]{NT}),
that is $Q_{n-4}\leq_{Q_n} Q\leq R<Q_n$.
Thus, since $R$ is normal in $Q_n$, we have $Q_{n-4}\leq R$ as well.
In summary we have $\langle\Phi(Q_n),Q_{n-4},Q\rangle\leq R$,
which typically turns out to be a fairly restrictive condition on $R$.

\medskip
Now, $\tilde{E}$ being relatively 
projective with respect to $R$, Green's Indecomposability 
Theorem implies that $\tilde{E}$ is induced from an 
indecomposable direct summand of $\res_R^{Q_n}(\tilde{E})$,
implying that $\res_R^{Q_n}(\tilde{E})$ is the direct sum
of two indecomposable modules of dimension $\dim(E)/2$ each.
This latter conclusion is then drawn to a contradiction,
implying that $\tilde{E}$, and thus $E$, cannot possibly
be relatively projective with respect to any proper subgroup of $Q_n$.

\medskip
(b) In the sequel, a key player will be the 
natural permutation $F\mathfrak{S}_n$-module $M$.
Letting $\{\gamma_1,\ldots,\gamma_n\}$ be its permutation basis,
$M$ is uniserial with composition series
$M> M'> M''> 0$, where
\begin{align*}
M'&:=\{\sum_{i=1}^na_i\gamma_i\in M\mid
       a_1,\ldots,a_n\in F,\,\sum_{i=1}^na_i=0\},\\
M''&:=\{a\sum_{i=1}^n\gamma_i\in M\mid a\in F\}\cong F\cong M/M',
\end{align*}
and $M'/M''\cong D$ as $F\mathfrak{S}_n$-modules.
In particular, $\dim(M')=n-1$ and
$\dim(D)=n-2$. Moreover,
$\{\gamma_i+\gamma_n\mid i\in\{1,\ldots,n-1\}\}$ is an $F$-basis of $M'$,
and if $^{\textbf{--}}:M\longrightarrow M/M''$
denotes the natural epimorphism then
$\{\overline{\gamma}_i+\overline{\gamma}_n\mid i\in\{1,\ldots,n-2\}\}$
is an $F$-basis of $M'/M''$ (cf. \cite[Ex. 5.1]{J}).
For convenience, in the following we will simply identify $D$ with $M'/M''$.

\medskip
This paper now is organised as follows: 
In Section \ref{sec:sbgrps} we begin by collecting some properties of the 
Sylow $2$-subgroups of $\mathfrak{S}_n$ and $\mathfrak{A}_n$, 
and their subgroups. 
It turns out that the behaviour of $E$ depends on the 
$2$-adic expansion of $n$. Hence letting $n=\sum_{j=1}^l 2^{i_j}$, 
for some $l\geq 1$ and $i_1>\ldots >i_l\geq 1$, we distinguish 
between several cases.
In Section \ref{sec:l1} we settle the case $l=1$, that is, 
$n$ is a $2$-power.
In Section \ref{sec:l2} we begin to investigate the case $l\geq 2$,
that is, $n$ is even but not a $2$-power, by first considering 
restrictions of $E$ to various abelian subgroups.
This leads to further case distinctions with respect to $n_l:=2^{i_l}$.
In Section \ref{sec:nlgt2} we settle the case $n_l>2$, 
while for $n_l=2$ in Sections \ref{sec:nl2lgeq4}--
\ref{sec:nl2l2} the cases $l\geq 4$, $l=3$, and $l=2$, 
are dealt with, respectively. In the very end the case $n=6$ 
remains to be considered, completing the proof of Theorem \ref{thm:main}.
In the final Section \ref{sec:appendix}, which is an appendix,
we in particular use the technique employed in Section \ref{sec:nl2l2} 
to give the new proof of \cite[Thm. 1.4(a), 1.5(a)]{MuZ}.

\medskip
We want to point out that, although no explicit reference
to computer calculations is made in the present paper,
before writing it we have dealt with various examples 
by way of explicit computations, which helped us to understand
the different behaviour of $E$ with respect to the $2$-adic
expansion of $n$.
To do so, we have used the computer algebra systems
{\sf GAP} \cite{GAP} and {\sf MAGMA} \cite{MAGMA},
and the specially tailored computational techniques 
to determine vertices and sources developed in \cite{DKZ}. 
Finally, for any finite group $G$, an
$FG$-module is understood to be a finitely generated right $FG$-module.
Furthermore, the endomorphism algebra $\End_{FG}(M)$ is also supposed 
to act from the right. 
For an introduction to the theory of vertices and sources we refer to 
\cite[Sec. 4.3]{NT}, and for details concerning the
representation theory of the symmetric groups to \cite{J} and
\cite{JK}.

\medskip
{\bf Acknowledgement.} The first author's research on
this article has been supported by the Deutsche Forschungsgemeinschaft
through DFG grant \#DA 1115/1-1. 
The second author gratefully acknowledges financial support and
the enjoyable hospitality of Friedrich-Schiller-Universit\"at Jena,
where parts of the present paper have been written.


\section{Sylow subgroups of the symmetric and alternating groups}
\label{sec:sbgrps}
\normalfont

We begin by introducing our notation for the Sylow $2$-subgroups of 
$\mathfrak{S}_n$ and $\mathfrak{A}_n$, respectively.
Moreover, we collect some properties of these Sylow subgroups and 
their subgroups which will be needed in the course of the 
subsequent sections. 

\begin{rem}\label{rem:sylow}
\normalfont
(a) First of all, let $n=2^m$, for some $m\in\mathbb{N}_0$. 
We set $P_1:=1$, $P_2:=C_2:=\langle (1,2)\rangle$, and
$P_{2^{i+1}}:=P_{2^i}\wr C_2
=\{(x_1,x_2;\sigma)\mid x_1,x_2\in P_{2^i},\, \sigma\in P_2\}$, 
for $i\geq 1$. Recall that, for $i\geq 1$ and
$(x_1,x_2;\sigma),\, (y_1,y_2;\pi)\in P_{2^{i+1}}$, we have 
$(x_1,x_2;\sigma)(y_1,y_2;\pi)
=(x_1y_{1^\sigma},x_2y_{2^\sigma};\sigma\pi)$.
As usual, for $i\geq 0$, we regard $P_{2^i}$ as a subgroup of 
$\mathfrak{S}_{2^i}$ in the obvious way. 
Then, by \cite[4.1.22, 4.1.24]{JK}, 
$P_n=P_{2^m}$ is a Sylow $2$-subgroup of $\mathfrak{S}_{2^m}=\mathfrak{S}_n$.

\medskip
Moreover, by construction, $P_{2^m}$ is generated by the following elements:
\begin{equation}\label{equ:Pgens}
w_{2^i}:=\prod_{k=1}^{2^{i-1}}(k,k+2^{i-1}),
\end{equation}
for $i=1,\ldots,m$. 
For instance, $P_8$ is generated by 
$w_2=(1,2)$, $w_4=(1,3)(2,4)$ and $w_8=(1,5)(2,6)(3,7)(4,8)$. 
Additionally let $w_1:=1$. Since, by \cite[Satz III.15.3]{H}, 
we have $|P_{2^m}:\Phi(P_{2^m})|=2^m$, (\ref{equ:Pgens}) 
yields in fact a minimal set of generators for $P_n=P_{2^m}$.

\medskip
If $m\geq 2$, then we have
$P_n=P_{2^m}=P_{2^{m-1}}\wr P_2
=\{(x_1,x_2;\sigma)\mid x_1,x_2\in P_{2^{m-1}},\, \sigma\in P_2\}$. 
The base group of $P_{2^{m-1}}\wr P_2$ shall henceforth be denoted by $B_n$. 
When viewing $P_{2^m}$ as a subgroup of $\mathfrak{S}_{2^m}$ as above, 
$B_n$ corresponds to $P_{2^{m-1}}\times w_nP_{2^{m-1}}w_n$.

\medskip
(b) Let now $n\geq 2$ be even with $2$-adic expansion 
$n=\sum_{j=1}^l2^{i_j}$, for appropriate $l\geq 2$ and 
$i_1>\ldots >i_l\geq 1$, and let $n_j:=2^{i_j}$ for $j=1,\ldots,l$.
Then, by \cite[4.1.22, 4.1.24]{JK},
$P_n=\prod_{j=1}^l P_{n_j}^{g_j}$,
with $g_1=1$, and 
$g_j:=\prod_{k=1}^{n_j}(k,k+\sum_{s=1}^{j-1}n_s)$ for $j=2,\ldots,l$,
is a Sylow $2$-subgroup of $\mathfrak{S}_n$. 
For convenience, we will simply write
$P_n=\prod_{j=1}^lP_{n_j}$ where $P_{n_j}$ 
is understood to be acting on the subset 
$\Omega_j:=\{1+\sum_{s=1}^{j-1}n_s,\ldots,\sum_{s=1}^j n_s\}$ 
of $\{1,\ldots,n\}$, for $j=1,\ldots,l$.
If $n$ is odd then we simply set $P_n:=P_{n-1}$,
so that again $P_n$ is a Sylow $2$-subgroup of $\mathfrak{S}_n$.

\medskip
Via (\ref{equ:Pgens}) we obtain
a minimal set of generators for $P_{n_j}$ 
denoted by $\mathcal{W}_j:=\{w_{2,j},\ldots,w_{n_j,j}\}$, 
for $j=1,\ldots,l$. That is, 
$\mathcal{W}:=\bigcup_{j=1}^l\mathcal{W}_j$
is a minimal generating set for $P_n$. 
For instance, $P_{14}=P_8\times P_4\times P_2$ is generated by
$w_{2,1}=(1,2)$, 
$w_{4,1}=(1,3)(2,4)$, 
$w_{8,1}=(1,5)(2,6)(3,7)(4,8)$, 
$w_{2,2}=(9,10)$, $w_{4,2}=(9,11)(10,12)$ and
$w_{2,3}=(13,14)$.

\medskip
(c) Now, for any $n\geq 2$, we set 
$Q_n:=P_n\cap\mathfrak{A}_n$ so that $Q_n$ is a Sylow $2$-subgroup of 
$\mathfrak{A}_n$. 

\medskip
If $n=2^m$, for some $m\geq 2$, then writing $P_n=P_{2^{m-1}}\wr P_2$
we have 
$Q_n=\{(x_1,x_2;\sigma)\in P_{2^{m-1}}\wr P_2 \mid 
  x_1x_2\in Q_{2^{m-1}}\}\leq P_{2^{m-1}}\wr P_2$,
and we define
\begin{equation}\label{equ:Qgens}
w_2':=w_2\cdot w_2^{w_{2^m}}=(1,2)(2^{m-1}+1,2^{m-1}+2),
\text{ and } w_{2^i}':=w_{2^i}\text{ for } i=2,\ldots,m.
\end{equation}

\medskip
If $n=\sum_{j=1}^l2^{i_j}=\sum_{j=1}^l n_j$, for some $l\geq 2$ and some 
$i_1>\ldots >i_l\geq 1$, is the $2$-adic expansion
of $n$ as above then we define
\begin{equation}\label{equ:Qgens2}
w_{2,j}':=w_{2,j}w_{2,l} \text{ for } j=1,\ldots,l-1
\text{, and } w_{2^s,j}':=w_{2^s,j} 
\text{ for } j=1,\ldots, l \text{ and } s=2,\ldots,i_j.
\end{equation}
\end{rem}

\begin{prop}\label{prop:mingens}
With the notation of the previous remark we have:

\begin{enumerate}
\item[(i)] If $n=2^m$, for some $m\geq 2$, then 
$Q_n=\langle w_2',\ldots,w_{2^m}'\rangle$. 
Furthermore, $\{w_2',\ldots, w_{2^m}'\}$
is a minimal set of generators for $Q_n$.
\item[(ii)] If $n=\sum_{j=1}^l2^{i_j}=\sum_{j=1}^l n_j$, for some $l\geq 2$ 
and some $i_1>\ldots >i_l\geq 1$, then
$$\mathcal{W}':=\bigcup_{j=1}^{l-1}\{w_{2,j}',\ldots,
   w_{n_j,j}'\}\cup\{w_{4,l}',\ldots,w_{n_l,l}'\}$$ 
is a minimal generating set for $Q_n$.
\end{enumerate}
\end{prop}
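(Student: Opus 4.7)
The plan is to treat generation and minimality separately in (i) and (ii). In both parts one first checks by counting transpositions that each proposed generator lies in $Q_n$: each $w_{2^i}$ with $i\geq 2$ is a product of $2^{i-1}$ disjoint transpositions and hence even, and the ``doubled'' elements $w_2'=w_2w_2^{w_{2^m}}$ in (i) and $w'_{2,j}=w_{2,j}w_{2,l}$ in (ii) are each products of two disjoint transpositions.

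\medskip
For generation in (i), I would induct on $m$ using the wreath decomposition $P_n=P_{2^{m-1}}\wr C_2$ and the identification $Q_n=\{(x_1,x_2;\sigma):x_1x_2\in Q_{2^{m-1}}\}$. Conjugating $w_{2^i}=(w_{2^i},1;1)$ by $w_{2^m}$ produces $(1,w_{2^i};1)$; together with $w_2'=(w_2,w_2;1)$ these generate the index-2 subgroup $Q_n\cap B_n$, and adjoining $w_{2^m}$ gives all of $Q_n$. For generation in (ii), the key identity is
\[
  w'_{2,j}\cdot (w'_{2,j})^{w_{n_j,j}}\;=\;w_{2,j}\,w_{2,j}^{w_{n_j,j}},
\]
valid because $w_{2,l}$ commutes with $w_{n_j,j}$ (their supports are disjoint). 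For $j<l$ with $n_j\geq 4$ this recovers the ``$w_2'$'' generator of $Q_{n_j}$ from part (i), so together with $w_{4,j},\ldots,w_{n_j,j}$ we obtain $Q_{n_j}$. For $j=l$ with $n_l\geq 4$, picking any $k<l$ and computing $w'_{2,k}\cdot(w'_{2,k})^{w_{n_l,l}}=w_{2,l}w_{2,l}^{w_{n_l,l}}$ produces the missing generator of $Q_{n_l}$, so part (i) applies there as well. Combined with the mixers themselves, which project onto $Q_n/\prod_j Q_{n_j}\cong(C_2)^{l-1}$, this yields $\langle\mathcal{W}'\rangle=Q_n$.

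\medskip
For minimality the basic input is that squares and commutators in $P_n$ are even, so $\Phi(P_n)\leq Q_n$; since also $\Phi(Q_n)\leq \Phi(P_n)$, the quotient $Q_n/\Phi(P_n)$ has index $2$ in $P_n/\Phi(P_n)\cong(C_2)^{\sum i_j}$, giving $d(Q_n)\geq\sum_{j=1}^l i_j-1$. In case (ii) this matches $|\mathcal{W}'|$ exactly, so minimality is immediate. In case (i) the same bound gives only $d(Q_n)\geq m-1$, one short of the required $m$, and I would close this gap by constructing an additional homomorphism
\[
  \psi:Q_n\longrightarrow C_2,\qquad \psi((x_1,x_2;\sigma)):=\mathrm{sgn}(x_1),
\]
whose multiplicativity on $Q_n$ crucially uses $\mathrm{sgn}(x_1)=\mathrm{sgn}(x_2)$ (the defining condition for $Q_n$) to handle the wreath-product twist when $\sigma$ is the swap. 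Then $\psi(w_2')=-1$ while $\psi(w_{2^i})=+1$ for $i\geq 2$, so $w_2'\notin\Phi(Q_n)$, and this extra ``dimension'' pushes $d(Q_n)$ up to $m$.

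\medskip
The main obstacle is the construction of $\psi$ in case (i): the generic Frattini argument is blind to the extra generator $w_2'$, and one must exploit the internal wreath-product structure of $P_n$ to detect it.
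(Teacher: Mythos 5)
Your proposal is correct, but it reaches both conclusions by a genuinely different route than the paper. For generation, the paper uses a uniform index-two trick: it sets $Q:=\langle w_2',\ldots,w_{2^m}'\rangle$ (resp.\ $Q:=\langle\mathcal{W}'\rangle$), checks that the odd involution $w_2$ (resp.\ $w_{2,l}$) normalises $Q$ with $\langle w_2\rangle Q=P_n$, so $|P_n:Q|=2$ and $Q=Q_n$; you instead work structurally inside the wreath product in case (i) -- note that your claim that $w_2'$ together with the $w_{2^i}$ and $w_{2^i}^{w_{2^m}}$ generates $B_n\cap Q_n$ is exactly the paper's later Proposition~\ref{prop:basis}(i), so you should either carry out the induction you indicate or use the same normalisation/index argument -- and you reduce case (ii) to case (i) via the identities $w_{2,j}'\,(w_{2,j}')^{w_{n_j,j}}=w_{2,j}w_{2,j}^{w_{n_j,j}}$ and $w_{2,k}'\,(w_{2,k}')^{w_{n_l,l}}=w_{2,l}w_{2,l}^{w_{n_l,l}}$ (both correct) together with the quotient $Q_n/\prod_jQ_{n_j}\cong C_2^{\,l-1}$. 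For minimality the divergence is more substantial: in case (i) the paper argues combinatorially that omitting $w_{2^j}'$ destroys transitivity on $\{1,\ldots,n\}$ (the points $1$ and $1+2^{j-1}$ land in different orbits), with no Frattini computation at all, while in case (ii) it deduces minimality from minimality of $\mathcal{W}'\cup\{w_{2,l}\}$ in $P_n$; your rank count modulo $\Phi(P_n)$ handles (ii) the same way in effect, and in (i) your character $\psi((x_1,x_2;\sigma))=\mathrm{sgn}(x_1)$ is indeed a homomorphism on $Q_n$ (precisely because $\mathrm{sgn}(x_1)=\mathrm{sgn}(x_2)$ there) with $\psi(w_2')=-1$ and $\psi(w_{2^i}')=+1$ for $i\geq 2$. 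To finish cleanly you should say one more sentence than you do: since $w_2'\equiv 1$ while $w_4,\ldots,w_{2^m}$ are independent modulo $\Phi(P_n)\supseteq\Phi(Q_n)$, any subproduct of your generators lying in $\Phi(Q_n)$ could involve only $w_2'$, and $\psi$ rules that out; hence all $m$ images are independent in $Q_n/\Phi(Q_n)$, which is the precise content of ``pushing $d(Q_n)$ up to $m$''. Your approach costs more bookkeeping but buys more: it essentially re-proves the paper's Propositions~\ref{prop:frattini} and~\ref{prop:frattiniQ} (in particular $|\Phi(P_n):\Phi(Q_n)|=2$ for $n=2^m$), whereas the paper's orbit argument is shorter and its index-two trick for generation is the simplification you could most profitably borrow.
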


\begin{proof}
Suppose first that $n=2^m$, for some $m\geq 2$. 
Obviously, $Q:=\langle w_2',\ldots,w_{2^m}'\rangle\leq Q_n$.
Moreover, we have $w_2w_{2^i}'w_2=w_2'w_{2^i}'w_2'\in Q$, for 
$i=1,\ldots,m-1$, and
$$w_2w_{2^m}'w_2= w_2w_{2^m}w_2\cdot w_{2^m}w_{2^m}=w_2'w_{2^m}'\in Q  ,$$
so that $w_2Qw_2=Q$. This shows that 
$\langle w_2\rangle Q=Q\langle w_2\rangle=P_n$,
and thus $|P_n:Q|=2$, that is $Q=Q_n$. 
It remains to show that $\{w_2',\ldots,w_{2^m}'\}$ 
is a minimal generating set for $Q_n$.
For this, notice that $Q_n$ acts transitively on $\{1,\ldots,n\}$, 
whereas for any $j\in\{1,\ldots,m\}$ the group
$Q:=\langle w_{2^i}'\mid i\in\{1,\ldots,m\}\setminus\{j\}\rangle$ does not,
namely, 1 and $1+2^{j-1}$ then lie in different $Q$-orbits. 
Consequently, $\{w_2',\ldots,w_{2^m}'\}$ is in fact minimal.

\medskip
Now suppose that $n=\sum_{j=1}^l2^{i_j}=\sum_{j=1}^l n_j$, 
for some $l\geq 2$ and some $i_1>\ldots >i_l\geq 1$. 
Then $Q:=\langle \mathcal{W}'\rangle\leq Q_n$.
Furthermore, we have
\begin{align*}
w_{2,l}w_{2^s,j}'w_{2,l}&= w_{2^s,j}'\in Q\text{, for } 
j=1,\ldots,l-1,\; s=1,\ldots,i_j,\\
w_{2,l}w_{2^s,l}'w_{2,l}&=w_{2,1}'w_{2^s,l}'w_{2,1}'\in Q\text{, for } 
s=2,\ldots,i_l,
\end{align*}
hence $w_{2,l}Qw_{2,l}=Q$, and clearly 
$\langle w_{2,l}\rangle Q=Q\langle w_{2,l}\rangle=P_n$. 
This shows that $|P_n:Q|=2$ so that $Q=Q_n$. Furthermore, 
from Remark \ref{rem:sylow} we deduce that 
$\mathcal{W}'\cup\{w_{2,l}\}$ is a minimal generating set for $P_n$, 
and thus also $\mathcal{W}'$ has to be a minimal generating set for $Q_n$. 
\end{proof}

\begin{expl}\label{expl:mingens}
\normalfont
In order to illustrate the rather technical notation above, 
consider the cases where $n=8$ and $n=14=8+4+2$, respectively:
Then $Q_8$ is generated by 
$$ w_2'=(1,2)(5,6), \quad 
   w_4'=(1,3)(2,4), \quad
   w_8'=(1,5)(2,6)(3,7)(4,8).$$
Analogously, $Q_{14}$ is generated by 
\begin{align*}
w_{2,1}'&=(1,2)(13,14), &
w_{4,1}'&=(1,3)(2,4), & 
w_{8,1}'&=(1,5)(2,6)(3,7)(4,8), \\
w_{2,2}'&=(9,10)(13,14), &
w_{4,2}'&=(9,11)(10,12). 
\end{align*}
\end{expl}

\begin{prop}\label{prop:frattini}
For any $n\geq 2$ we have $\Phi(P_n)=[P_n,P_n]$ and 
$\Phi(Q_n)=[Q_n,Q_n]$. If $n=2^m$, for some $m\geq 2$,
then $|\Phi(P_n):\Phi(Q_n)|=2$, otherwise $\Phi(P_n)=\Phi(Q_n)$.
\end{prop}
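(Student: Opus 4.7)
The plan is to reduce $\Phi(G) = [G,G]\cdot G^2$ (valid for any finite $2$-group) to the cleaner $\Phi(G) = [G,G]$ in both cases $G = P_n$ and $G = Q_n$, and then to read off the index by comparing the sizes of the minimal generating sets already exhibited in Remark \ref{rem:sylow} and Proposition \ref{prop:mingens}.

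The first step is to observe that both $P_n$ and $Q_n$ are generated by involutions, which forces the abelianisation $G/[G,G]$ to be elementary abelian and thus $G^2 \leq [G,G]$. For $P_n$ this is visible from Remark \ref{rem:sylow}: every generator in $\mathcal{W}$ is a product of pairwise disjoint transpositions. For $Q_n$ the ``higher'' generators $w_{2^s,j}'$ with $s \geq 2$ are the same elements and hence involutions; the modified ``lowest'' generators $w_{2,j}' = w_{2,j}w_{2,l}$ (for $j < l$, case $l \geq 2$) and $w_2' = w_2 \cdot w_2^{w_{2^m}}$ (power-of-$2$ case) are involutions too, because the two factors act on disjoint supports $\Omega_j$, $\Omega_l$, respectively the disjoint pairs displayed in (\ref{equ:Qgens}). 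Consequently $\Phi(P_n) = [P_n,P_n]$ and $\Phi(Q_n) = [Q_n,Q_n]$, which also gives the inclusion $\Phi(Q_n) \leq \Phi(P_n)$ directly from $Q_n \leq P_n$.

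The second step is a routine order count. Using
\[
|\Phi(P_n):\Phi(Q_n)| = |P_n:Q_n| \cdot |Q_n:\Phi(Q_n)| \cdot |P_n:\Phi(P_n)|^{-1}
\]
together with $|P_n:Q_n| = 2$ and Burnside's basis theorem applied to the minimal generating sets, one reduces everything to comparing $|\mathcal{W}|$ with $|\mathcal{W}'|$. For $n = 2^m$ with $m \geq 2$ both sets have size $m$, so the factor $|P_n:Q_n| = 2$ survives and the index is $2$. For $n = \sum_{j=1}^l 2^{i_j}$ with $l \geq 2$, Proposition \ref{prop:mingens} yields $|\mathcal{W}'| = |\mathcal{W}| - 1$, because the last Sylow factor $P_{n_l}$ loses the generator $w_{2,l}$ in passing to $Q_n$; then the factor $2$ cancels and the two Frattini subgroups have equal order, whence coincide.

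No real obstacle is anticipated: the only mildly delicate point is verifying that the adjusted generators $w_2'$ and $w_{2,j}'$ remain involutions, but this is immediate from the explicit descriptions in (\ref{equ:Qgens}) and (\ref{equ:Qgens2}), which present each of them as a product of disjoint transpositions.
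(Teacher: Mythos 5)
Your argument is the same as the paper's: both prove $\Phi=[\,\cdot\,,\cdot\,]$ by observing that $P_n$ and $Q_n$ are generated by involutions, so that the abelianisations are elementary abelian, and both obtain the index statement by comparing the sizes of the minimal generating sets from Remark~\ref{rem:sylow} and Proposition~\ref{prop:mingens} via Burnside's basis theorem. The only difference is cosmetic: where the paper declares the order count ``immediate,'' you write out the index formula and the size comparison $|\mathcal{W}'|=|\mathcal{W}|$ resp.\ $|\mathcal{W}|-1$ explicitly.
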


\begin{proof}
By \cite[Satz III.3.14]{H}, $[P_n,P_n]\leq \Phi(P_n)$ and 
$[Q_n,Q_n]\leq \Phi(Q_n)$.
As we have just seen, both $P_n$ and $Q_n$ are generated by 
elements of order $2$. Consequently, the same holds true for
$P_n/[P_n,P_n]$ and $Q_n/[Q_n,Q_n]$. Thus both $P_n/[P_n,P_n]$ 
and $Q_n/[Q_n,Q_n]$ are elementary abelian. 
This yields $[P_n,P_n]=\Phi(P_n)$ and 
$[Q_n,Q_n]= \Phi(Q_n)$. 
The second assertion is immediate from 
Proposition \ref{prop:mingens} and Remark \ref{rem:sylow}.
\end{proof}

\begin{prop}\label{prop:frattiniQ}
Let $n=2^m$, for some $m\geq 2$. Then we have
$\Phi(P_n)=\{(x_1,x_2;1)\mid x_1x_2\in\Phi(P_{2^{m-1}})\}$ and
$\Phi(Q_n)=\{(x_1,x_2;1)\mid 
 x_1,x_2\in Q_{2^{m-1}}, x_1x_2\in\Phi(P_{2^{m-1}})\}$.
\end{prop}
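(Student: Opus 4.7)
Both formulas reduce to commutator computations via Proposition \ref{prop:frattini}, which identifies $\Phi(P_n)$ with $[P_n,P_n]$ and $\Phi(Q_n)$ with $[Q_n,Q_n]$; I plan to work throughout inside the wreath product $P_n=P_{2^{m-1}}\wr P_2$.

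For $\Phi(P_n)$ I would compute $[P_n,P_n]$ directly from the wreath multiplication rule. Commutators within the base group contribute $[P_{2^{m-1}},P_{2^{m-1}}]\times[P_{2^{m-1}},P_{2^{m-1}}]$, and a short calculation gives $[(x,1;1),(1,1;\tau)]=(x^{-1},x;1)$ for the nontrivial element $\tau\in P_2$. These together suffice to produce every $(x_1,x_2;1)$ with $x_1x_2\in\Phi(P_{2^{m-1}})$: writing $x_2=x_1^{-1}c$ with $c\in\Phi(P_{2^{m-1}})$ and factoring as $(x_1,x_1^{-1};1)(1,c;1)$. The reverse inclusion follows because the described set is closed under products and inverses (the condition $x_1x_2\in\Phi(P_{2^{m-1}})$ is preserved since $P_{2^{m-1}}/\Phi(P_{2^{m-1}})$ is abelian) and clearly contains every commutator.

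For $\Phi(Q_n)$ I would set $U:=\{(x_1,x_2;1)\mid x_1,x_2\in Q_{2^{m-1}},\,x_1x_2\in\Phi(P_{2^{m-1}})\}$ and prove $\Phi(Q_n)=U$ by showing that $U$ is normal in $Q_n$ with $Q_n/U$ elementary abelian of order $2^m$. Subgroup closure and normality of $U$ follow from a short calculation in the wreath product, using that $P_{2^{m-1}}/\Phi(P_{2^{m-1}})$ is elementary abelian. For the order, the $P_n$-formula from the first step gives $U\leq\Phi(P_n)$ with $|\Phi(P_n):U|=2$ (an element of $\Phi(P_n)$ lies in $U$ exactly when its two coordinates are individually even, and not merely when their product is); combined with $|Q_n:\Phi(P_n)|=2^{m-1}$, which holds since $\Phi(P_n)\leq Q_n$ and $|P_n:Q_n|=2$, this yields $|Q_n:U|=2^m$. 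Because $|Q_n:\Phi(Q_n)|=2^m$ by Proposition \ref{prop:mingens}(i), once $\Phi(Q_n)\leq U$ is established the equality $\Phi(Q_n)=U$ will follow by counting.

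The main obstacle is verifying that $Q_n/U$ is elementary abelian, that is, $g^2\in U$ for every $g\in Q_n$. For $g=(x_1,x_2;1)$ this is easy, since $g^2=(x_1^2,x_2^2;1)$ has even coordinates and $\overline{x_1^2x_2^2}=1$ in the elementary abelian quotient $P_{2^{m-1}}/\Phi(P_{2^{m-1}})$. The delicate case is $g=(x_1,x_2;\tau)\in Q_n$, where $g^2=(x_1x_2,x_2x_1;1)$: the hypothesis $g\in Q_n$ gives $x_1x_2\in Q_{2^{m-1}}$, and hence also its conjugate $x_2x_1$, while the remaining condition $(x_1x_2)(x_2x_1)\in\Phi(P_{2^{m-1}})$ will be read off in the abelianization as $\overline{x_1}^2\,\overline{x_2}^2=1$. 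This is the only place where the specific structure of $Q_n$ inside $P_n$, rather than of $P_n$ alone, enters the argument substantively.
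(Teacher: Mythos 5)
Your proposal is correct and follows essentially the same strategy as the paper: the decisive step in both is the computation that $g^2$ lies in the candidate set $U=H$ for every $g=(x_1,x_2;\sigma)\in Q_n$, after which one closes by an index count. The bookkeeping differs slightly---you count $|Q_n:U|=2^m=|Q_n:\Phi(Q_n)|$ (using Proposition~\ref{prop:mingens}(i) and verifying that $U$ is normal with elementary abelian quotient), while the paper uses the sharper $|\Phi(P_n):\Phi(Q_n)|=2$ from Proposition~\ref{prop:frattini} together with $\Phi(Q_n)\leq H\subsetneq\Phi(P_n)$, and for the $\Phi(P_n)$-formula the paper cites \cite[L.~1.4]{O} rather than recomputing $[P_n,P_n]$ as you do---but these are minor variations on the same argument.
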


\begin{proof}
The first assertion follows from \cite[L. 1.4]{O} 
and the fact that $\Phi(P_n)=[P_n,P_n]$. It remains to show that
$\Phi(Q_n)=H:=\{(x_1,x_2;1)\mid x_1,x_2
 \in Q_{2^{m-1}}, x_1x_2\in\Phi(P_{2^{m-1}})\}\leq \Phi(P_n)$.
For this let $x=(x_1,x_2;\sigma)\in Q_n$. Then 
$x_1,x_2\in P_{2^{m-1}}$, $x_1x_2,x_2x_1\in Q_{2^{m-1}}$ 
and $\sigma\in\{(1,2),1\}$. Moreover,
\begin{equation*}
x^2=\begin{cases}
(x_1^2,x_2^2;1), &\text{ if } \sigma=1,\\
(x_1x_2,x_2x_1;1),&\text{ if }\sigma\neq 1.
\end{cases}
\end{equation*}
That is, $x^2=(y_1,y_2;1)$ where $y_1,y_2\in Q_{2^{m-1}}$, and 
$x^2\in\Phi(P_n)$, by \cite[Satz III.3.14]{H}. Thus 
$y_1y_2\in\Phi(P_{2^{m-1}})$.
Since, again by \cite[Satz III.3.14]{H}, 
$\Phi(Q_n)$ is generated by the squares in $Q_n$, this implies 
$\Phi(Q_n)\leq H\leq \Phi(P_n)$. But $Q_{2^{m-1}}< P_{2^{m-1}}$ 
so that $H\neq \Phi(P_n)$. Therefore, from Proposition \ref{prop:frattini}
we deduce that $H=\Phi(Q_n)$.
\end{proof}

\begin{prop}\label{prop:basis}
Suppose that $n=2^m$, for some $m\geq 2$. Then the following hold:
\begin{enumerate}
\item[(i)] 
We have
$B_n=\langle w_2,\ldots, w_{2^{m-1}}, w_2^{w_n},\ldots,
             w_{2^{m-1}}^{w_n}\rangle$
and
$$ B_n\cap Q_n=\langle w_2w_2^{w_n}, w_4,\ldots,w_{2^{m-1}},w_4^{w_n},
                       \ldots,w_{2^{m-1}}^{w_n}\rangle .$$
\item[(ii)] 
We have $P_{2^{m-1}}\Phi(P_n)=B_n$ and $Q_{2^{m-1}}\Phi(P_n)=B_n\cap Q_n$.
\item[(iii)] If $m\geq 3$ then we have $Q_{2^{m-1}+2}\Phi(Q_n)=B_n\cap Q_n$.
\end{enumerate}
\end{prop}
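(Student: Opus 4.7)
The plan is to prove the three parts in order. Part (i) supplies an explicit generating set for $B_n \cap Q_n$ that feeds into parts (ii) and (iii); the latter two are then established by working modulo the Frattini subgroups, relying on Propositions \ref{prop:frattini} and \ref{prop:frattiniQ} together with the basic fact that in any $2$-group every square lies in the Frattini subgroup.

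For (i), the first equality is immediate from the identification $B_n = P_{2^{m-1}} \times P_{2^{m-1}}^{w_n}$ and the generating set $P_{2^{m-1}} = \langle w_2,\ldots,w_{2^{m-1}}\rangle$ from Remark \ref{rem:sylow}(a). To prove the second equality I would denote the right-hand side by $H$, check that each listed generator is a product of an even number of disjoint transpositions and thus belongs to $B_n\cap Q_n$, and then obtain the reverse inclusion by placing $Q_{2^{m-1}} \times Q_{2^{m-1}}^{w_n}$ inside $H$. The key computation is that since $w_2^{w_n}$ has support disjoint from that of $w_{2^i}$ for $i\leq m-1$, conjugation of $w_{2^i}$ by $w_2 w_2^{w_n}\in H$ yields $w_{2^i}^{w_2}\in H$, and the product $w_{2^{m-1}}\cdot w_{2^{m-1}}^{w_2}$ equals precisely the element $(1,2)(1+2^{m-2},2+2^{m-2})$ which, together with $w_4,\ldots,w_{2^{m-1}}$, generates $Q_{2^{m-1}}$ by Proposition \ref{prop:mingens}(i). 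A symmetric argument puts $Q_{2^{m-1}}^{w_n}$ into $H$, and since $w_2 w_2^{w_n}\in H$ lies outside the index-$2$ subgroup $Q_{2^{m-1}} \times Q_{2^{m-1}}^{w_n}$ of $B_n\cap Q_n$, equality follows. The case $m=2$ is degenerate and can be verified directly.

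For (ii), the main tool is the decomposition $(x_1,x_2;1) = (x_1 x_2^{-1},1;1)\cdot(x_2,x_2;1)$ in $B_n$. By Proposition \ref{prop:frattiniQ} the second factor lies in $\Phi(P_n)$, because $x_2^2\in\Phi(P_{2^{m-1}})$; the first factor lies in $P_{2^{m-1}}$, and in $Q_{2^{m-1}}$ whenever $\mathrm{sgn}(x_1)=\mathrm{sgn}(x_2)$. Both reverse inclusions are trivial after noting that $\Phi(P_n)=[P_n,P_n]\leq Q_n$ by Proposition \ref{prop:frattini}.

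For (iii), the crucial observation is that $w_n$ is a product of $2^{m-1}$ disjoint transpositions, hence even for $m\geq 2$, so that $w_n\in Q_n$ and conjugation by $w_n$ is inner in $Q_n$. Combined with Proposition \ref{prop:frattini} this yields $w_{2^i}^{w_n}\equiv w_{2^i}\pmod{\Phi(Q_n)}$ for $i=2,\ldots,m-1$. Using (i), the quotient $(B_n\cap Q_n)/\Phi(Q_n)$ is therefore generated by the images of $w_2 w_2^{w_n},w_4,\ldots,w_{2^{m-1}}$; and Proposition \ref{prop:mingens}(ii), applied to the $2$-adic expansion $2^{m-1}+2 = 2^{m-1}+2^1$, identifies exactly this list as a generating set of $Q_{2^{m-1}+2}$ (viewed inside $P_n$). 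Together with $\Phi(Q_n)\leq\Phi(P_n)\leq B_n$ from Proposition \ref{prop:frattiniQ}, the claimed equality follows. The most delicate bookkeeping of the whole proof occurs in part (i), where one must carefully match the element produced by the conjugation computation with the generator of $Q_{2^{m-1}}$ given by Proposition \ref{prop:mingens}(i); once this identification is made, parts (ii) and (iii) are essentially formal consequences.
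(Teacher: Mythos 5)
Your proposal is correct; no gaps. Parts (i) and (ii) run essentially parallel to the paper's argument: for (ii) you use the factorization $(x_1,x_2;1)=(x_1x_2^{-1},1;1)(x_2,x_2;1)$ where the paper uses $(x_1x_2,1;1)(x_2^{-1},x_2;1)$, which is the same idea (yours needs the extra standard fact that $x_2^2\in\Phi(P_{2^{m-1}})$, and note that the reverse inclusions also use $\Phi(P_n)\le B_n$, which is immediate from Proposition \ref{prop:frattiniQ} that you cite); for (i) the paper shows the subgroup $H$ is normalized by $w_2$ and satisfies $\langle w_2\rangle H=B_n$, whereas you exhibit $Q_{2^{m-1}}\times Q_{2^{m-1}}^{w_n}\le H$ together with $w_2w_2^{w_n}\in H$ lying outside this index-$2$ subgroup of $B_n\cap Q_n$ -- your computation $w_{2^{m-1}}\cdot w_{2^{m-1}}^{w_2}=(1,2)(1+2^{m-2},2+2^{m-2})$ is correct and matches the generator $w_2'$ of Proposition \ref{prop:mingens}(i). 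The genuine divergence is in (iii): the paper proves $Q_{2^{m-1}+2}\Phi(Q_n)\le(P_{2^{m-1}+2}\Phi(P_n))\cap Q_n=B_n\cap Q_n$ and then forces equality by an order count, using $\Phi(Q_n)\cap Q_{2^{m-1}+2}=Q_{2^{m-1}}\cap\Phi(P_n)$ and $|\Phi(P_n):\Phi(Q_n)|=2$; you instead work modulo $\Phi(Q_n)=[Q_n,Q_n]$, use $w_n\in Q_n$ to replace the generators $w_{2^i}^{w_n}$ ($i\ge 2$) from (i) by $w_{2^i}$, and then recognize the surviving list $w_2w_2^{w_n},w_4,\ldots,w_{2^{m-1}}$ as exactly the generating set of $Q_{2^{m-1}+2}$ furnished by Proposition \ref{prop:mingens}(ii) for $2^{m-1}+2=2^{m-1}+2^1$ (indeed $w_{2,1}'=w_2w_2^{w_n}$ under the standard embedding), giving $B_n\cap Q_n=Q_{2^{m-1}+2}\Phi(Q_n)$ in one stroke. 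Your route buys a cleaner argument with no index bookkeeping and no computation of $\Phi(Q_n)\cap Q_{2^{m-1}+2}$, at the price of leaning on part (i) and on the precise identification of the generators of the embedded $Q_{2^{m-1}+2}$; the paper's route needs neither (i) nor that identification, but does need the explicit description of $\Phi(Q_n)$ from Proposition \ref{prop:frattiniQ} to evaluate the intersection. Both are valid, and your dependencies (Propositions \ref{prop:mingens}, \ref{prop:frattini}, \ref{prop:frattiniQ}) all precede the statement, so no circularity arises.
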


\begin{proof}
Since $B_n=P_{2^{m-1}}\times P_{2^{m-1}}^{w_n}$, 
the first statement in (i) is clear from Remark \ref{rem:sylow}. Furthermore,
$\langle w_2w_2^{w_n}, w_4,\ldots,w_{2^{m-1}},w_4^{w_n},\ldots,
 w_{2^{m-1}}^{w_n}\rangle\leq B_n\cap\mathfrak{A}_n=B_n\cap Q_n$,
and is normalized by $w_2$. Since
$\langle w_2\rangle\cdot\langle w_2w_2^{w_n},w_4,
 \ldots,w_{2^{m-1}},w_4^{w_n},\ldots,w_{2^{m-1}}^{w_n}\rangle=B_n$,
also the second assertion in (i) follows.

\medskip
Now we show that $B_n\leq P_{2^{m-1}}\Phi(P_n)$. For this let 
$x\in B_n$, and write $x=(x_1,x_2;1)$ with $x_1,x_2\in P_{2^{m-1}}$.
Then $x=(x_1x_2,1;1)(x_2^{-1},x_2;1)$. Since $x_1x_2\in P_{2^{m-1}}$ and 
$(x_2^{-1},x_2;1)\in\Phi(P_n)$, we get $x\in P_{2^{m-1}}\Phi(P_n)$.
Consequently, $P_{2^{m-1}}\Phi(P_n)$ is a proper subgroup of $P_n$ 
containing the maximal subgroup $B_n$ of $P_n$, and hence
$B_n=P_{2^{m-1}}\Phi(P_n)$.

\medskip
Next we verify that $B_n\cap Q_n=Q_{2^{m-1}}\Phi(P_n)$. 
By (i), we already know that 
$Q_{2^{m-1}}\Phi(P_n)\leq (P_{2^{m-1}}\Phi(P_n))\cap Q_n=B_n\cap Q_n$.
Now let $x\in B_n\cap Q_n$, and again write $x=(x_1,x_2;1)$ for
appropriate $x_1,x_2\in P_{2^{m-1}}$. Since $x\in Q_n$, 
we have $x_1x_2\in Q_{2^{m-1}}$, and thus 
$x=(x_1x_2,1;1)(x_2^{-1},x_2;1)\in Q_{2^{m-1}}\Phi(P_n)$.
Hence $Q_{2^{m-1}}\Phi(P_n)=B_n\cap Q_n$, proving (ii).

\medskip
Now let $m\geq 3$, then
$Q_{2^{m-1}+2}\Phi(Q_n)\leq (P_{2^{m-1}+2}\Phi(P_n))\cap Q_n=B_n\cap Q_n$.
The last equation follows from the fact that $P_{2^{m-1}+2}<P_n$ so that 
$P_{2^{m-1}+2}\Phi(P_n)$ is a proper subgroup of $P_n$
containing the maximal subgroup $B_n=P_{2^{m-1}}\Phi(P_n)$. Since 
$$\Phi(Q_n)\cap Q_{2^{m-1}+2}=\{(x_1,1;1)\mid 
  x_1\in\Phi(P_{2^{m-1}})\}=Q_{2^{m-1}}\cap\Phi(P_n),$$ 
we now have
\begin{align*}
|Q_{2^{m-1}+2}\Phi(Q_n)|&
=\frac{|Q_{2^{m-1}+2}||\Phi(Q_n)|}{|\Phi(Q_n)\cap Q_{2^{m-1}+2}|}
=\frac{|Q_{2^{m-1}+2}||\Phi(Q_n)|}{|\Phi(P_{2^{m-1}})|}
=\frac{|Q_{2^{m-1}}|\cdot 2\cdot |\Phi(Q_n)|}{|\Phi(P_{2^{m-1}})|}\\
&=\frac{|Q_{2^{m-1}}||\Phi(P_n)|}{|\Phi(P_{2^{m-1}})|}
=|Q_{2^{m-1}}\Phi(P_n)|=|B_n\cap Q_n|.
\end{align*}
This finally shows that also $Q_{2^{m-1}+2}\Phi(Q_n)=B_n\cap Q_n$, 
and assertion (iii) follows.
\end{proof}

\begin{rem}\label{rem:xy}
\normalfont
(a) Again let $n=\sum_{j=1}^l2^{i_j}=\sum_{j=1}^ln_j$, for some
$l\geq 1$ and some $i_1>\ldots >i_l\geq 1$, be
the $2$-adic expansion of $n$.
For $j=1,\ldots,l$ define $y_{n_j}:=w_{n_j,j}\cdots w_{2,j}$,
where the $w_{2,j},\ldots,w_{n_j,j}$ are as in Remark \ref{rem:sylow}. 
Then, by \cite[L. 3.3]{MuZ}, $y_{n_j}$ is an $n_j$-cycle, 
for $j=1,\ldots,l$. Furthermore, 
let $Y_{n_j}:=\langle y_{n_j}\rangle$, for $j=1,\ldots,l$, 
and $Y_n:=\prod_{j=1}^lY_{n_j}$; clearly $Y_n\nleq \mathfrak{A}_n$.
Next we set $y_{n_j}':=y_{n_j}y_{n_l}$ and 
$Y_{n_j}':=\langle y_{n_j}'\rangle$.
Then we have
$Y_n':=Y_n\cap\mathfrak{A}_n=\prod_{j=1}^lY_{n_j}'$,
namely, we clearly have 
$Y:=\langle  y_{n_1}',\ldots,y_{n_l}'\rangle\leq Y_n'$,
and from $\langle y_{n_l}\rangle Y=Y\langle y_{n_l}\rangle=Y_n$
where $y_{n_l}\notin Y$ but $y_{n_l}^2\in Y$ we get $|Y_n:Y|=2$,
hence $Y=Y_n'$.

\medskip
Moreover, we set
$x_{n_j}:=y_{n_j}^2$, for $j=1,\ldots,l$, that is, 
$x_{n_j}$ has cycle type $(n_j/2,n_j/2)$, for $j=1,\ldots,l$.
We also set $X_{n_j}:=\langle x_{n_j}\rangle$, for $j=1,\ldots,l$, 
and $X_n:=\prod_{j=1}^l X_{n_j}$.
Note that, if $l\geq 2$ then the $x_{n_j}$ are squares in
$Q_n$, so that $X_n\leq\Phi(Q_n)$.

\medskip
(b) When viewing the wreath product $P_{n_j}=P_{\frac{n_j}{2}}\wr C_2$
as a subgroup of $\mathfrak{S}_{n_j}$ as usual, 
the subgroup of $P_{n_j}$ 
corresponding to the base group is isomorphic to 
$P_{\frac{n_j}{2}}\times P_{\frac{n_j}{2}}$ and shall be denoted by 
$B_{n_j}:=P_{n_j}'\times P_{n_j}''$, for $j=1,\ldots,l$. Here 
$P_{n_j}'\cong P_{\frac{n_j}{2}}$ is supposed to be acting on 
$\Omega_j':=\{n_1+\cdots +n_{j-1}+1,\ldots, n_1+\cdots +n_{j-1}+n_j/2\}$, 
and $P_{n_j}''\cong P_{\frac{n_j}{2}}$ acting on 
$\Omega_j'':=\{n_1+\cdots +n_{j-1}+n_j/2+1,\ldots, n_1+\cdots +n_j\}$.
We also set $B_n:=\prod_{j=1}^lB_{n_j}$, as well as
$B_{n_j}':=B_{n_j}\cap \mathfrak{A}_n\leq Q_{n_j}$, for $j=1,\ldots,l$, and 
$B_n':=B_n\cap\mathfrak{A}_n\leq Q_n$. Notice that 
$y_{n_j}^2=w_{n_j,j}w_{n_{j-1},j}\cdots 
 w_{2,j}w_{n_j,j}\cdot w_{n_{j-1},j}\cdots w_{2,j}\in B_{n_j}'$, 
for $j=1,\ldots,l$. 
\end{rem}

%


\section{The case $l=1$}\label{sec:l1}
\normalfont

We investigate the case where $n\geq 8$ is a $2$-power. 

\begin{lemma}\label{lemma:possmax}
Let $n=2^m$, for some $m\geq 3$. 
Suppose that $R$ is a maximal subgroup of $Q_n$ such that $E$ is 
relatively $R$-projective. Then, if $m>3$ we have $R=B_n\cap Q_n=B_n'$.
If $m=3$ then we have
$R\in\{B_8', (Q_4\times Q_4)\langle w_8\rangle, 
             (Q_4\times Q_4)\langle w_8^{w_2}\rangle\}$.  
\end{lemma}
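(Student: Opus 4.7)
My plan is to translate the hypothesis into a linear-algebra question over $\mathbb{F}_2$ inside the elementary abelian quotient $V:=Q_n/\Phi(Q_n)$, which by Proposition~\ref{prop:mingens}(i) has basis $\bar w_2',\ldots,\bar w_{2^m}'$. Applying the strategy of Remark~\ref{rem:strategy}: $R$ is maximal in the $2$-group $Q_n$, hence normal and containing $\Phi(Q_n)$; and since $\res_{\mathfrak A_{n-1}}^{\mathfrak A_n}(E)\cong E^{(n-2,1)}_0$ has vertex $Q_{n-4}$ and trivial source by Remark~\ref{rem:nat}(b), $R$ must contain a $Q_n$-conjugate of $Q_{n-4}$, which by normality of $R$ upgrades to $Q_{n-4}\leq R$. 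Hence the admissible $R$ correspond bijectively to the hyperplanes of $V$ containing the image $\overline{Q_{n-4}}$.

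As preparation I would identify $(B_n\cap Q_n)/\Phi(Q_n)$ as the hyperplane $\langle\bar w_2',\ldots,\bar w_{n/2}'\rangle$ of $V$: each $w_{2^s}'=w_{2^s}$ with $s<m$ acts within $\{1,\ldots,2^s\}\subseteq\{1,\ldots,n/2\}$ and thus lies in $B_n=P_{n/2}\times P_{n/2}^{w_n}$, whereas $w_{2^m}'=w_n$ swaps the two halves and falls outside $B_n$. Moreover $Q_{n-4}\leq B_n\cap Q_n$ by inspection, since every direct factor $Q_{2^{i_j}}$ of $P_{n-4}$ acts within an interval contained in one of the two halves $\{1,\ldots,n/2\}$ and $\{n/2+1,\ldots,n\}$. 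So every admissible hyperplane lies above $\overline{Q_{n-4}}$ inside the hyperplane $\overline{B_n\cap Q_n}$, which for $m\geq 4$ will force uniqueness.

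The central step is to pin down $\overline{Q_{n-4}}$ itself. For $m\geq 4$ I would show that the generators $w_2',w_4',\ldots,w_{n/2}'$ already lie in $Q_{n-4}$: those with $s\geq 2$ coincide with the standard generators $w_{2^s,1}'$ of the first factor $Q_{n/2}$ of $Q_{n-4}$, and $w_2'=(1,2)(n/2+1,n/2+2)$ equals either $w_{2,1}'$ itself (when $m=4$, as then the last block of $n-4$ begins at $n/2+1$) or the product $w_{2,1}'\cdot w_{2,2}'$ (when $m\geq 5$, the common last-block transpositions cancelling). Hence $\overline{Q_{n-4}}$ fills the hyperplane, which forces $R=B_n\cap Q_n=B_n'$. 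For $m=3$, a direct computation inside $Q_4=\langle(1,3)(2,4),(1,2)(3,4)\rangle$ gives $(1,2)(3,4)=(w_2')^{w_4'}\cdot w_2'\equiv(w_2')^2\equiv 1\pmod{\Phi(Q_8)}$, so $\overline{Q_4}=\langle\bar w_4'\rangle$ is $1$-dimensional; exactly three hyperplanes therefore contain it, corresponding to the nonzero linear forms killing $\bar w_4'$, namely $\bar w_2'^*$, $\bar w_8'^*$, and $\bar w_2'^*+\bar w_8'^*$.

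Finally I would match these three hyperplanes with the subgroups in the lemma. The form $\bar w_8'^*$ yields $\langle\bar w_2',\bar w_4'\rangle=(B_8\cap Q_8)/\Phi(Q_8)$, giving $B_8'$. For $(Q_4\times Q_4)\langle w_8\rangle$: each factor $Q_4$ maps to $\langle\bar w_4'\rangle$ (the second via $(5,7)(6,8)=(w_4')^{w_8}\equiv\bar w_4'$ and $(5,6)(7,8)=((1,2)(3,4))^{w_8}\in\Phi(Q_8)$ by normality of the Frattini subgroup), so the whole group has image $\langle\bar w_4',\bar w_8'\rangle$, the kernel of $\bar w_2'^*$. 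For $(Q_4\times Q_4)\langle w_8^{w_2}\rangle$: the explicit cycle computation $(w_2w_8)^2=(1,6,2,5)^2=(1,2)(5,6)=w_2'$ yields $w_8^{w_2}=w_2'\cdot w_8$, so the image is $\langle\bar w_4',\bar w_2'+\bar w_8'\rangle$, the kernel of $\bar w_2'^*+\bar w_8'^*$. The main technical burden is the explicit generator arithmetic, particularly the decomposition $w_2'=w_{2,1}'w_{2,2}'$ inside $Q_{n-4}$ for $m\geq 5$ and the verification $(w_2w_8)^2=w_2'$ used in the $m=3$ case; both reduce to short direct cycle calculations.
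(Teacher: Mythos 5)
Your proof is correct and follows the same skeleton as the paper's: via Remark~\ref{rem:strategy}, obtain $Q_{n-4}\Phi(Q_n)\leq R$, then count the maximal subgroups of $Q_n$ above that group. You diverge in how you establish $Q_{n-4}\Phi(Q_n)=B_n\cap Q_n$ for $m>3$: the paper invokes Proposition~\ref{prop:basis}(iii) together with the inequality $2^{m-1}+2\leq n-4$, whereas you verify directly that the generators $w_2',\ldots,w_{n/2}'$, which span $B_n'$ modulo $\Phi(Q_n)$, already lie in $Q_{n-4}$, thereby bypassing that proposition; this is a legitimate and more self-contained variant, at the cost of a little extra cycle bookkeeping. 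The $m=3$ case is treated in both proofs by counting the three hyperplanes above $Q_4\Phi(Q_8)$; your matching is more explicit, and your computation $w_8^{w_2}=w_2'w_8$ agrees with the paper's terse remark $w_2'=w_8^{w_2}\cdot w_8$. Two cosmetic slips, neither fatal: $w_2'=w_2\,w_2^{w_n}=(1,2)(n/2+1,n/2+2)$, so $w_2'\neq w_2$ and it is \emph{not} supported on $\{1,2\}$ as your phrasing suggests for $s=1$ --- but it does lie in $B_n=P_{n/2}\times P_{n/2}^{w_n}$, so the conclusion stands; and $w_2w_8=(1,6,2,5)(3,7)(4,8)$ rather than just $(1,6,2,5)$, though its square is indeed $(1,2)(5,6)=w_2'$. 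Finally, the deduction $Q_{n-4}\leq R$ via Remark~\ref{rem:strategy} relies on $\res_{Q_n}^{\mathfrak{A}_n}(E)$ being indecomposable, which for $n$ a $2$-power is supplied by~\cite[Prop.~4.2]{MuZ}; this is worth stating explicitly rather than leaving it implicit in the appeal to the strategy.
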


\begin{proof}
By \cite[Prop. 4.2]{MuZ}, 
we know that $\res_{Q_n}^{\mathfrak{A}_n}(E)$ 
is indecomposable. Hence fix some vertex $Q<Q_n$ of $E$,
such that $Q\leq R$. Then, by Remark \ref{rem:strategy}, we have 
$\langle\Phi(Q_n),Q_{n-4},Q\rangle\leq R$.
If $n=8$ then $Q_4\Phi(Q_8)$ is a normal subgroup of $Q_8$, 
and $Q_8/Q_4\Phi(Q_8)$ is elementary abelian of order 4. 
Thus there are three maximal subgroups of $Q_8$ containing 
$Q_4\Phi(Q_8)$, and these are precisely the ones listed;
note that $B_8'=(Q_4\times Q_4)\langle w_2'\rangle$
where $w_2'=w_8^{w_2}\cdot w_8$.
Hence we may now suppose that $m>3$, and it suffices to prove
$\Phi(Q_n)Q_{n-4}=Q_{n-4}\Phi(Q_n)=B_n\cap Q_n$.
But, since $m>3$, we have $n-4=2^m-4> 2^{m-1}+2$, 
and the latter assertion
is immediate from Proposition \ref{prop:basis} and the fact
that $Q_{n-4}<Q_n$.
\end{proof}

\begin{prop}\label{prop:vertex1}
Let $n=2^m$, for some $m\geq 3$. Then $E$ has vertex $Q_n$ 
and source $\res_{Q_n}^{\mathfrak{A}_n}(E)$.
\end{prop}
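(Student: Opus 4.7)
The argument follows the scheme of Remark~\ref{rem:strategy}. Since $\tilde{E}:=\res_{Q_n}^{\mathfrak{A}_n}(E)$ is indecomposable by \cite[Prop.~4.2]{MuZ}, $E$ and $\tilde E$ share the same vertices, so it suffices to show that $\tilde E$ is not relatively projective with respect to any proper subgroup of $Q_n$. Any such subgroup lies in a maximal one, and by Lemma~\ref{lemma:possmax} we may therefore assume $\tilde E$ is relatively $R$-projective with $R=B_n'$ (if $m>3$), or $R\in\{B_8',(Q_4\times Q_4)\langle w_8\rangle,(Q_4\times Q_4)\langle w_8^{w_2}\rangle\}$ (if $m=3$). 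In each case, Green's Indecomposability Theorem combined with Mackey's formula gives
\[
\res_R^{Q_n}(\tilde E)\;\cong\;V\oplus V^g
\]
for some indecomposable $FR$-module $V$ of $F$-dimension $(n-2)/2$ and any $g\in Q_n\setminus R$.

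Since $R\trianglelefteq Q_n$ (being of index $2$) and the trivial $FR$-module is $g$-stable, this yields $\dim_F E^R=2\dim_F V^R$, which is in particular an even integer. The contradiction will be obtained by proving that $\dim_F E^R=1$ in each case.

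The key observation is the following: writing $x=\sum_i a_i\gamma_i\in M'$ and $v:=\sum_{i=1}^n\gamma_i$ (so that $M''=F\langle v\rangle$), the class $x+M''$ lies in $E^R$ if and only if there is a (necessarily unique) group homomorphism $\phi:R\to F$ with $(r-1)x=\phi(r)\cdot v$ for every $r\in R$. Comparing coefficients of $\gamma_j$ in this identity shows that if $r\in R$ fixes some $j\in\{1,\ldots,n\}$ then $\phi(r)=0$. For $R=B_n'$ every generator from Proposition~\ref{prop:mingens} fixes many points of $\{1,\ldots,n\}$: the generators of $Q_{n/2}\times Q_{n/2}$ act on one of the blocks $\Omega_1,\Omega_2$ and fix the other pointwise, while $w_2'$ is a product of two disjoint transpositions fixing all but four points. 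Hence $\phi$ vanishes on a generating set and therefore on all of $R$, forcing $x\in(M')^R=F\langle v_1,v_2\rangle$ with $v_j:=\sum_{i\in\Omega_j}\gamma_i$; this gives $E^R=F\langle v_1,v_2\rangle/M''$, which is one-dimensional. For the two exceptional subgroups arising when $m=3$, the outer generator $w_8$ (respectively $w_8^{w_2}$) has no fixed points, but the vanishing of $\phi$ on $Q_4\times Q_4$ still forces $a_i$ to be constant on each of $\Omega_1$ and $\Omega_2$; substituting back, $x$ must be of the form $\alpha v_1+\beta v_2$ with $\alpha,\beta\in F$, and $E^R$ is again one-dimensional, spanned by the common image of $v_1\equiv v_2\pmod{M''}$.

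The main obstacle is the bookkeeping in the two exceptional $m=3$ cases: one needs to verify that the constraints imposed by the outer generator $w_8$ (or $w_8^{w_2}$) are consistent with -- and do not strictly reduce -- the constraints coming from $Q_4\times Q_4$, and that the resulting one-dimensional span survives modulo $M''$ to give $\dim_F E^R=1$ rather than $0$.
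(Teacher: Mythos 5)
Your argument is correct, but the mechanism of the contradiction differs from the paper's. You share the same skeleton (indecomposability of $\res_{Q_n}^{\mathfrak{A}_n}(E)$ via \cite[Prop.~4.2]{MuZ}, reduction to the maximal subgroups of Lemma~\ref{lemma:possmax}, and the forced decomposition $\res_R^{Q_n}(\tilde E)\cong V\oplus V^g$ from Green's Indecomposability Theorem and Mackey), but then you extract a purely numerical constraint: since $R\trianglelefteq Q_n$, $\dim_F E^R=2\dim_F V^R$ is even (indeed $\geq 2$, as $V^R\neq 0$ for a $2$-group in characteristic $2$), and you contradict it by computing $\dim_F E^R=1$ directly. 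Your computation of $E^R$ via the homomorphism $\phi:R\to (F,+)$ with $(r-1)x=\phi(r)v$, which must vanish on every $r$ having a fixed point and hence on a generating set of each candidate $R$, is valid: for $R=B_n'$ this pins $x$ down to $F\langle v_1,v_2\rangle$ and $E^R=\langle\bar v_1\rangle$; for the two exceptional subgroups at $m=3$ the subgroup $Q_4\times Q_4$ already forces $x=\alpha v_1+\beta v_2$, and the outer generator $w_8$ (resp.\ $w_8^{w_2}$) merely interchanges $\Omega_1$ and $\Omega_2$, so it fixes $\bar v_1=\bar v_2$ automatically and imposes no further condition -- the ``bookkeeping'' you flag at the end is thus already settled by your own sketch and is not a gap. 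The paper instead uses that the socle of each summand of $\res_R^{Q_n}(\tilde E)$ is trivial, identifies a two-dimensional socle explicitly via the restriction to the cyclic subgroup $X_n\leq\Phi(P_n)\leq R$ (for $m=3$, via specially chosen elements of order $4$ in the exceptional subgroups, with modified vectors $s_1,s_2$), and contradicts triviality of the $R$-action with $w_{2^{m-1}}\in R$. Both arguments ultimately exploit that socle equals fixed points for a $2$-group in characteristic $2$: the paper shows the $R$-fixed space would have to be two-dimensional and $R$-trivial and then exhibits a non-fixed vector, while you show it is only one-dimensional, which is odd. Your route is more uniform (no separate treatment of $m=3$) and somewhat shorter; the paper's route produces explicit socle vectors and the uniserial structure of the summands, which is the style of computation reused in later sections.
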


\begin{proof}
As already mentioned, $\res_{Q_n}^{\mathfrak{A}_n}(E)$ is indecomposable,
by \cite[Prop. 4.2]{MuZ}. 
We follow the strategy given in Remark \ref{rem:strategy},
and assume that $R<Q_n$ is a maximal subgroup such that $E$ is relatively
$R$-projective.

\medskip
Let first $m>3$. Then, by Lemma \ref{lemma:possmax}, we have
$R=B_n\cap Q_n\geq\Phi(P_n)$.
In particular, $\langle x_n\rangle=X_n\leq \Phi(P_n)\leq R$ 
where $x_n$ is the permutation of cycle type $(n/2,n/2)$ 
defined in Remark \ref{rem:xy}. 
As in the proof of \cite[Prop. 4.2]{MuZ} we get
$\res_{X_n}^{\mathfrak{A}_n}(E)\cong 
 T_{\frac{n-2}{2}}\oplus T_{\frac{n-2}{2}}$,
where $T_{\frac{n-2}{2}}$ denotes the indecomposable $FX_n$-module of 
dimension $(n-2)/2$. 
Thus we conclude that $\res_R^{\mathfrak{A}_n}(E)=E_1\oplus E_2$
where $E_1$ and $E_2$ are in fact uniserial with trivial
heads and trivial socles. Thus, in particular, 
$\Soc(\res_{X_n}^{\mathfrak{A}_n}(E))
=\res_{X_n}^R(\Soc(\res_R^{\mathfrak{A}_n}(E)))$.
We set 
$$s_1:=\sum_{i=1}^{n/2}\gamma_i\in M'
\quad\text{ and }\quad 
s_2:=\sum_{i=1}^{n/4}\gamma_i+\sum_{i=1+n/2}^{3n/4}\gamma_i\in M'.$$
Then $s_1(x_n+1)=0$ and $s_2(x_n+1)=\sum_{i=1}^n\gamma_i\in M''$. 
Consequently, $\overline{s}_1:=s_1+M''$ and $\overline{s}_2:=s_2+M''$ are
annihilated by $\Rad(FX_n)$, that is they are contained in 
$\Soc(\res_{X_n}^{\mathfrak{A}_n}(E))$. 
Furthermore, $\overline{s}_1$ and $\overline{s}_2$ 
are linearly independent over $F$. 
Therefore $\{\overline{s}_1,\overline{s}_2\}$ is an $F$-basis for
$\Soc(\res_R^{\mathfrak{A}_n}(E))$. 
But we also have $w_{2^{m-1}}\in B_n\cap Q_n=R$, 
yielding the contradiction 
$$\overline{s}_2w_{2^{m-1}}
=\sum_{i=1+n/4}^{3n/4}\overline{\gamma}_i\neq\overline{s}_2.$$

\medskip
Let finally $m=3$. Then $R$ is one of the subgroups given in 
Lemma \ref{lemma:possmax}.  
To exclude the case $R_1:=B_8'$, notice that, by Remark \ref{rem:xy},
we have $x_8=y_8^2=(1,3,2,4)(5,7,6,8)\in R_1$,
and hence we may argue as in the case $m>3$. 
As for $R_2:=(Q_4\times Q_4)\langle w_8\rangle$, 
we have $w_8w_4=(1,5,3,7)(2,6,4,8)\in R_2$,
and thus letting $s_1:=\gamma_1+\gamma_3+\gamma_5+\gamma_7\in M'$
and $s_2:=\gamma_1+\gamma_3+\gamma_2+\gamma_4\in M'$ instead
we similarly arrive at a contradiction, using
$s_1 w_4^{w_2}=\gamma_2+\gamma_4+\gamma_5+\gamma_7$
where $w_4^{w_2}=(1,4)(2,3)\in Q_4$.
For $R_3:=(Q_4\times Q_4)\langle w_8^{w_2}\rangle$ 
again argue similarly, using
$w_8^{w_2}w_4=(1,6,3,7)(2,5,4,8)\in R_3$,
and $s_1:=\gamma_1+\gamma_3+\gamma_6+\gamma_7\in M'$
and $s_2:=\gamma_1+\gamma_3+\gamma_2+\gamma_4\in M'$,
where $s_1 w_4^{w_2}=\gamma_2+\gamma_4+\gamma_6+\gamma_7$.
\end{proof}

\medskip
This completes the proof of Theorem \ref{thm:main} in the case
where $n$ is a $2$-power.


\section{The case $l\geq 2$}\label{sec:l2}
\normalfont

We now investigate the case where $n\geq 4$ is even, but not a $2$-power.
We aim to show that the vertices of 
$E=\res_{\mathfrak{A}_n}^{\mathfrak{S}_n}(D)$ are then always 
the Sylow $2$-subgroups of $\mathfrak{A}_n$, unless $n=6$. 
We first need a few preparations, concerning restrictions of $E$
to various subgroups:
Recall from Remark \ref{rem:xy} that $Q_n$ contains the abelian subgroups
$Y_n'$ and $X_n$, where, since $l\geq 2$, we moreover have $X_n\leq\Phi(Q_n)$.
We will investigate the restrictions of $E$ to $Y_n'$ and $X_n$, respectively,
in detail. Thus we gain information on restrictions of
$E$ to $Q_n$ and to maximal subgroups of $Q_n$. 
This leads to a further division into various subcases, 
which will be dealt with in the subsequent sections.

\begin{rem}\label{rem:ybas}
\normalfont
In Remark \ref{rem:nat} we introduced the permutation basis 
$\{\gamma_1,\ldots,\gamma_n\}$ of the natural permutation 
$F\mathfrak{S}_n$-module $M$. While working with the group $Y_n'$, 
it will be convenient to re-number the elements of this
basis as follows: 
for $j=1,\ldots,l$ and $i=n_1+\cdots + n_{j-1}+1,\ldots, n_1+\cdots +n_j$, 
we set
$$\delta_i:=\gamma_{n_1+\cdots +n_{j-1}+1}y_{n_j}^{i-n_1-\cdots -n_{j-1}-1}.$$
In particular, for $j=1,\ldots,l$ and 
$i=n_1+\cdots +n_{j-1}+1,\ldots,n_1+\cdots + n_j-1$ we have
$\delta_iy_{n_j}=\delta_{i+1},$
and $\delta_{n_1+\cdots +n_j}y_{n_j}=\delta_{n_1+\cdots +n_{j-1}+1}$. 
That is, $\{\delta_{n_1+\cdots +n_{j-1}+1},\ldots,\delta_{n_1+\cdots +n_j}\}$ 
is the permutation basis of the natural 
permutation $FY_{n_j}$-module, for $j=1,\ldots,l$. 

\medskip
For $j=1,\ldots,l$ we now define
\begin{align*}
\delta_{j'}^+&:=\sum_{i=0}^{n_j/2-1}\delta_{n_1+\cdots +n_{j-1}+1+2i}
=\sum_{i\in\Omega_j'}\gamma_i=:\gamma_{j'}^+,\\
\delta_{j''}^+&:=\sum_{i=0}^{n_j/2-1}\delta_{n_1+\cdots +n_{j-1}+2+2i}
=\sum_{i\in\Omega_j''}\gamma_i=:\gamma_{j''}^+,
\end{align*}
and
$$ \delta_0^+:=\sum_{i=0}^{n/2-1}\delta_{2+2i}=\sum_{j=1}^l\delta_{j''}^+
  =\sum_{j=1}^l\gamma_{j''}^+=:\gamma_0^+,$$
as well as 
$\delta_j^+:=\delta_{j'}^++\delta_{j''}^+
=\gamma_{j'}^++\gamma_{j''}^+=:\gamma_j^+$, 
and $\delta^+:=\sum_{j=1}^n\delta_j=\sum_{j=1}^n\gamma_j=:\gamma^+$.
The sets $\Omega_j'$ and $\Omega_j''$ are as in Remark \ref{rem:xy}.
\end{rem}

\begin{prop}\label{prop:bassoc}
With the above notation, for 
$\res_{Y_n'}^{\mathfrak{A}_n}(E)=\res_{Y_n'}^{\mathfrak{S}_n}(M'/M'')$, we
have:
\begin{enumerate}
\item[(i)] If $l>2$ then $\{\overline{\delta}_j^+\mid j\in\{1,\ldots,l-1\}\}$ 
is an $F$-basis of $\Soc(\res_{Y_n'}^{\mathfrak{A}_n}(E))$.
\item[(ii)] 
If $l=2$ and $n_l>2$ then $\{\overline{\delta}_0^+,\overline{\delta}_1^+\}$ 
is an $F$-basis of $\Soc(\res_{Y_n'}^{\mathfrak{A}_n}(E))$.
\item[(iii)] 
If $l=2$ and $n_l=2$ then $\res_{Y_n'}^{\mathfrak{A}_n}(E)\cong FY_n'$ 
with socle spanned by $\overline{\delta}_1^+$.
\end{enumerate}
\end{prop}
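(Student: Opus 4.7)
The plan is to determine $\Soc(\res_{Y_n'}^{\mathfrak{A}_n}(E))=(M'/M'')^{Y_n'}$ by explicitly solving the condition that a representative $v\in M$ satisfy $v(g-1)\in M''=F\gamma^+$ for every generator $g$ of $Y_n'$. Using the $\delta$-basis of Remark \ref{rem:ybas}, the actions of the generators are read off from the formulas $y_{n_j}'=y_{n_j}y_{n_l}$ ($j<l$) and $y_{n_l}'=y_{n_l}^2$: for $j<l$ the generator $y_{n_j}'$ cyclically shifts the $\delta$'s within $\Omega_j$, cyclically shifts the $\delta$'s within $\Omega_l$, and fixes every other $\Omega_k$ pointwise, whereas $y_{n_l}'$ shifts the $\delta$'s within $\Omega_l$ by two local positions and fixes the remaining orbits. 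In particular the $Y_n'$-orbits on $\{1,\ldots,n\}$ are exactly $\Omega_1,\ldots,\Omega_l$, so $M^{Y_n'}$ has $F$-basis $\{\gamma_j^+=\delta_j^+\mid j=1,\ldots,l\}$, and since every $n_j$ is even this subspace lies in $M'$.

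Writing $v(y_{n_j}'-1)=c_j\gamma^+$ for each $j$, I would split into the three cases. If $l\geq 3$, then for every $j<l$ there exists an orbit $\Omega_k$ with $k\neq j,l$ on which $y_{n_j}'$ is the identity, forcing $c_j=0$ by inspecting any coordinate in $\Omega_k$; the same argument applied to $y_{n_l}'$ (trivial on $\Omega_j$ for $j<l$) yields $c_l=0$. Hence $v\in M^{Y_n'}\subseteq M'$, so $(M'/M'')^{Y_n'}=M^{Y_n'}/M''$ has dimension $l-1$, proving (i) with the basis $\{\overline{\delta}_1^+,\ldots,\overline{\delta}_{l-1}^+\}$.

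If $l=2$ no auxiliary orbit exists; the recursion $a_k=a_{k-1}+c$ (with $c:=c_1$, in local indexing) forces the coefficients of $v$ to alternate as $\alpha,\alpha+c,\alpha,\alpha+c,\ldots$ on $\Omega_1$ and $\beta,\beta+c,\beta,\beta+c,\ldots$ on $\Omega_2$, while invariance under $y_{n_2}'=y_{n_2}^2$ is automatic since shifting by two local positions preserves the alternating pattern. The total coefficient sum of such a $v$ equals $\tfrac{n}{2}c$ in $F$, so $v\in M'$ reduces to $\tfrac{n}{2}c=0$. If $n_l>2$, then $i_1,i_2\geq 2$, so $n/2=2^{i_1-1}+2^{i_2-1}$ is even in $F$ and $c$ stays free; the three parameters $(\alpha,\beta,c)$ modulo the one-dimensional $M''$ yield a $2$-dimensional socle, in which $\overline{\delta}_1^+=\overline{\gamma}_1^+$ (case $\alpha=1,\beta=c=0$) and $\overline{\delta}_0^+=\overline{\gamma}_0^+$ (case $\alpha=\beta=0,c=1$) are evidently $F$-linearly independent modulo $\overline{\gamma}^+$, proving (ii). If $n_l=2$, then $\tfrac{n_1}{2}+1$ is odd in $F$, so $c=0$ is forced, and the surviving subspace $\{\alpha\gamma_1^++\beta\gamma_2^+\}$ collapses modulo $\gamma^+$ to the $1$-dimensional $F\overline{\delta}_1^+$.

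For the stronger statement in (iii) that $\res_{Y_n'}^{\mathfrak{A}_n}(E)\cong FY_n'$, I would argue as follows: $Y_n'=\langle y_{n_1}'\rangle$ is cyclic of order $n_1=n-2=\dim E$, so $FY_n'$ is a local uniserial $F$-algebra whose indecomposable modules are the quotients $FY_n'/\Rad^i$ ($1\leq i\leq n_1$), each with $1$-dimensional socle and uniquely determined by its $F$-dimension; a module of dimension $|Y_n'|$ with $1$-dimensional socle is therefore the regular module. The main obstacle I foresee is the bookkeeping in the middle step: tracking the cyclic-shift recursion carefully while switching between global and local indexing, and using the parity of $n/2$ in $F$ to separate cases (ii) and (iii). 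The crucial subtlety is that $\overline{\delta}_0^+$ is always $Y_n'$-fixed in $M/M''$, but it lies in $M'/M''$ only when $n/2$ is even in $F$, which distinguishes case (ii) from case (iii).
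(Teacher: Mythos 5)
Your proposal is correct and follows essentially the same approach as the paper: solve the cocycle condition $v(y_{n_j}'-1)\in M''$ coordinate-by-coordinate in the $\delta$-basis, use auxiliary $Y_n'$-orbits to force $c_j=0$ when $l>2$, and when $l=2$ reduce to the alternating-pattern family (your $(\alpha,\beta,c)$-parametrization is a cleaner repackaging of the paper's coefficient case analysis, and the parity of $n/2$ in $F$ plays exactly the role the paper assigns to whether $\delta_0^+\in M'$). The uniserial-algebra argument identifying the restriction as $FY_n'$ in case (iii) also matches the paper's dimension count once indecomposability is known from the one-dimensional socle.
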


\begin{proof}
Let $v=\sum_{i=1}^nc_i\delta_i\in M$, for appropriate $c_i\in F$,
such that $\bar{v}\in\Soc(\res_{Y_n'}^{\mathfrak{S}_n}(M/M''))$. 
Since the group $Y_n'$ acts trivially on 
$\Soc(\res_{Y_n'}^{\mathfrak{S}_n}(M/M''))$, for each $j\in\{1,\ldots,l\}$ 
there is some $a_j\in F$ such that
$vy_{n_j}'=v+a_j\delta^+$.

\medskip
Suppose first that $l>2$ so that 
$\Omega\setminus (\Omega_j\cup\Omega_l)\neq \emptyset$, for $j=1,\ldots,l$. 
That is, $\delta_iy_{n_j}'=\delta_i$, for 
$i\in\Omega\setminus (\Omega_j\cup\Omega_l)$. In particular, 
for $j=1,\ldots,l$ and $i\in \Omega\setminus (\Omega_j\cup\Omega_l)$, we get
$c_i=c_i+a_j$. Thus $a_j=0$, for $j=1,\ldots,l$.
If $j\leq l-1$ then $vy_{n_j}'=v$ holds if and only if
$c_{n_1+\cdots+n_{j-1}+1}=\ldots =c_{n_1+\cdots +n_j}$ and 
$c_{n_1+\cdots +n_{l-1}+1}=\cdots =c_{n_1+\cdots +n_l}$. 
Furthermore, $vy_{n_j}'=v$ for $j=1,\ldots,l-1$ implies 
also $vy_{n_l}'=v$. Consequently, we deduce:
$$\bigcap_{j=1}^l\{v\in M\mid vy_{n_j}'=v\}
=\bigcap_{j=1}^{l-1}\{v\in M\mid vy_{n_j}'=v\}
=\langle \delta_1^+,\ldots,\delta_l^+\rangle.$$
Moreover, $\delta_1^+,\ldots,\delta_l^+$ are linearly independent, 
and thus form a basis for 
$\Soc(\res_{Y_n'}^{\mathfrak{S}_n}(M))$. This in turn implies that 
$\{\overline{\delta}_j^+\mid j\in\{1,\ldots,l-1\}\}\subset M'/M''$
is an $F$-basis for
\begin{align*}
\{\bar{v}\in M/M''\mid v\in\langle \delta_1^+,\ldots,\delta_l^+\rangle_F\}&=\{\bar{v}\in M/M''\mid \bar{v}y_{n_j}'=\bar{v}\text{ for } j=1,\ldots,l\}\\
&=\Soc(\res_{Y_n'}^{\mathfrak{S}_n}(M/M'')).
\end{align*}
Hence 
$\Soc(\res_{Y_n'}^{\mathfrak{S}_n}(M'/M''))
=\Soc(\res_{Y_n'}^{\mathfrak{S}_n}(M/M''))
=\langle\overline{\delta}_j^+\mid j=1,\ldots,l-1\rangle_F$,
implying (i). 

\medskip 
Next suppose that $l=2$. If also $j=2$ then, 
since $\Omega\setminus \Omega_2=\Omega_1\neq\emptyset$, 
we get $a_2=0$. 
If $j=1$ then, since $\Omega\setminus (\Omega_1\cup\Omega_2)=\emptyset$, 
we deduce 
$vy_{n_1}'=\sum_{i=1}^nc_{i^{(y_{n_1}'^{-1})}}\delta_i
=\sum_{i=1}^n(c_i+a_1)\delta_i$, 
so that
\begin{equation*}\label{equ:c}
c_i=
\begin{cases}
c_1,& \text{ if } i\in\{1,\ldots,n_1\} \text{ is odd},\\
c_1+a_1,& \text{ if } i\in\{1,\ldots,n_1\} \text{ is even},\\
c_{n_1+1},& \text{ if } i\in\{n_1+1,\ldots,n_1+n_2\} \text{ is odd},\\
c_{n_1+1}+a_1,& \text{ if } i\in\{n_1+1,\ldots,n_1+n_2\} \text{ is even}.
\end{cases}
\end{equation*}
Hence $\{\delta_0^+,\delta_1^+,\delta_2^+\}$ is an
$F$-basis of $\{v\in M\mid vy_{n_1}'=v+a_1\delta^+\}$.
Thus the condition $0\neq\bar{v}\in\Soc(\res_{Y_n'}^{\mathfrak{S}_n}(M/M''))$ 
is equivalent to $vy_{n_2}'=v$ where 
$v \in \langle\delta_0^+,\delta_1^+,\delta_2^+\rangle_F$.
But $vy_{n_2}'=v$ holds if and only if $c_{n_1+1}=c_{n_1+i}$, 
for $i=1,\ldots,n_2-1$ odd, and $c_{n_1+2}=c_{n_1+i}$ for
$i=2,\ldots,n_2$ even. 
That is, $\{\delta_1,\ldots,\delta_{n_1},\delta_{2'}^+,\delta_{2''}^+\}$ is an
$F$-basis of $\{v\in M\mid vy_{n_2}'=v\}$. 
Since
$\langle\delta_0^+,\delta_1^+,\delta_2^+\rangle_F\leq 
\langle\delta_1,\ldots,\delta_{n_1},\delta_{2'}^+,\delta_{2''}^+\rangle_F$,
we get
$\Soc(\res_{Y_n'}^{\mathfrak{S}_n}(M/M''))
=\langle\overline{\delta}_0^+,\overline{\delta}_1^+,
 \overline{\delta}_2^+\rangle_F$. 
Furthermore, $\overline{\delta}_0^+$ and $\overline{\delta}_1^+$ 
are linearly independent over $F$, whereas
$\overline{\delta}_2^+=\overline{\delta}_1^+$. 

\medskip
Therefore, if $n_l>2$ then
$\{\overline{\delta}_0^+,\overline{\delta}_1^+\}\subset M'/M''$ is in fact
a basis of 
$\Soc(\res_{Y_n'}^{\mathfrak{S}_n}(M/M''))
=\Soc(\res_{Y_n'}^{\mathfrak{S}_n}(M'/M''))$,
proving (ii).
If finally $n_l=2$ then we have $\delta_0^+\notin M'$. 
Hence in this case we deduce that 
$\Soc(\res_{Y_n'}^{\mathfrak{S}_n}(M'/M''))$ has $F$-basis
$\{\overline{\delta}_1^+\}$. In particular, 
$\res_{Y_n'}^{\mathfrak{A}_n}(E)$ is indecomposable so that, 
comparing dimensions, we get
$\res_{Y_n'}^{\mathfrak{A}_n}(E)\cong FY_n'$, proving (iii).
\end{proof}

\begin{prop}\label{prop:ker}
Let $l\geq 2$ and $j\in\{1,\ldots,l\}$, and 
let $K_j:=\ker_D(y_{n_j}'^+)$ and $I_j:=\im_D(y_{n_j}'^+)$.
\begin{enumerate}
\item[(i)] If $j<l$ then $\dim(K_j)=\dim(D)-1$, and 
$I_j=\langle\overline{\delta}_j^+\rangle_F$.
\item[(ii)] If $j=l$ and $n_l>2$ then $\dim(K_j)=\dim(D)-2$, and 
$I_j=\langle\overline{\delta}_{l'}^+,\overline{\delta}_{l''}^+\rangle_F$.
\item[(iii)] If $j=l$ and $n_l=2$ then $y_{n_j}'^+=1$ and thus $K_j=0$.
\end{enumerate}
\end{prop}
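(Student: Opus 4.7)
The plan is to interpret $y_{n_j}'^+$ as the norm element of the cyclic subgroup $\langle y_{n_j}'\rangle$ acting on the permutation module $M$, and to exploit the characteristic-$2$ vanishing: for any $i\in\{1,\ldots,n\}$ with point stabilizer of order $e$ in $\langle y_{n_j}'\rangle$, one has $\gamma_i\cdot y_{n_j}'^+ = e\cdot \bigl(\sum_{k\in\mathcal{O}}\gamma_k\bigr)$, where $\mathcal{O}$ is the $\langle y_{n_j}'\rangle$-orbit of $i$. Since $\langle y_{n_j}'\rangle$ is a $2$-group, the factor $e$ is a $2$-power, and in characteristic $2$ the contribution vanishes whenever the orbit is non-regular.

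First I would pin down the cycle structure of $y_{n_j}'$. For $j=l$ we have $y_{n_l}' = y_{n_l}^2 = x_{n_l}$; if $n_l=2$, this is trivial and $y_{n_l}'^+=1$, so (iii) follows at once. If $n_l>2$, then $x_{n_l}$ has cycle type $(n_l/2,n_l/2)$ supported on $\Omega_l'\cup\Omega_l''$ and has order $n_l/2\geq 2$. For $j<l$, $y_{n_j}' = y_{n_j}y_{n_l}$ is a product of disjoint $n_j$- and $n_l$-cycles and has order $n_j$.

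It then follows that $M\cdot y_{n_j}'^+$ is spanned by the sums over the regular orbits: in case (i) the single regular orbit is $\Omega_j$, giving $M\cdot y_{n_j}'^+ = F\gamma_j^+$; in case (ii) there are two regular orbits $\Omega_l'$ and $\Omega_l''$, giving $M\cdot y_{n_j}'^+ = F\gamma_{l'}^+ + F\gamma_{l''}^+$. Since $n_j$ is even in case (i) and $n_l/2$ is even in case (ii) (using $n_l>2$), these sums all lie in $M'$; a short check using elements of the form $\gamma_p+\gamma_k$ with $\gamma_k\cdot y_{n_j}'^+=0$ shows that $M'\cdot y_{n_j}'^+ = M\cdot y_{n_j}'^+$. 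Passing to $D = M'/M''$, the hypothesis $l\geq 2$ guarantees that $\gamma_j^+$, respectively $\gamma_{l'}^+$ and $\gamma_{l''}^+$, are nonzero modulo $M''$ and, in case (ii), linearly independent: any element of $M''$ has constant coefficients on all of $\{1,\ldots,n\}$, while the relevant sums are supported on a proper subset. This identifies $I_j$ as asserted, and the dimensions of $K_j$ then follow from rank-nullity and $\dim D=n-2$.

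The main care needed is precisely in the characteristic-$2$ orbit analysis: one has to verify for each non-regular orbit (the fixed points outside $\Omega_j\cup\Omega_l$, and, in case (i), the orbit $\Omega_l$ with stabilizer of order $n_j/n_l$) that the stabilizer order is a nontrivial $2$-power, so that the $e$-factor kills the contribution. Once this orbit-by-orbit bookkeeping is done, the rest of the argument is elementary linear algebra.
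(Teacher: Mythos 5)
Your proposal is correct and takes essentially the same approach as the paper: both analyze the action of the norm element $y_{n_j}'^+$ orbit-by-orbit on $M$ (you phrase this as characteristic-$2$ vanishing on non-regular orbits, the paper via the radical filtration of the corresponding $FY_{n_j}'$-submodules), identify the image as spanned by the regular-orbit sums $\delta_j^+$ resp.\ $\delta_{l'}^+,\delta_{l''}^+$, and then descend to $M'$ and $D$.
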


\begin{proof}
For $j=1,\ldots,l$ we define the following elements in $M'$:
$$v_j:=\delta_{n_1+\cdots +n_j}+\delta_{n_1+\cdots +n_j+1},\text{ if }j<l,$$
as well as $v'_l:=\delta_n+\delta_1$ and $v''_l:=\delta_{n-1}+\delta_1$. 
Then $v_jy_{n_j}'^+=\delta_j^+,\text{ if } j\leq l-1$, and 
$$v'_ly_{n_l}'^+=\begin{cases}
v'_l,& \text{ if } n_l=2,\\
\delta_{l''}^+,&\text{ if } n_l>2,
\end{cases}
\quad\text{ and }\quad
v''_ly_{n_l}'^+=\begin{cases}
v''_l,& \text{ if } n_l=2,\\
\delta_{l'}^+,&\text{ if } n_l>2. \\
\end{cases}$$

\medskip
We first suppose that $j<l$.
Then $Y_{n_j}'$ acts regularly on 
$U_j:=\langle \delta_{n_1+\cdots +n_{j-1}+1},\ldots,
              \delta_{n_1+\cdots +n_j}\rangle_F$, 
so that
$U_jy_{n_j}'^+\cong\Rad^{n_j-1}(FY_{n_j}')\cong F$ and 
$\dim(\ker_{U_j}(y_{n_j}'^+))=n_j-1$. Furthermore, 
the vector space 
$\langle\delta_{n_1+\cdots +n_{l-1}+1},\ldots,
        \delta_{n_1+\cdots +n_l}\rangle_F$ 
is an indecomposable $FY_{n_j}'$-module of dimension $n_l$. 
In particular, 
$\langle\delta_{n_1+\cdots +n_{l-1}+1},\ldots,
        \delta_{n_1+\cdots +n_l}\rangle_F$ is annihilated
by $\Rad^{n_j-1}(FY_{n_j}')=F\langle y_{n_j}'^+\rangle=\Soc(FY_{n_j}')$. 
Furthermore, $Y_{n_j}'$ acts trivially on the subspace
$$\langle \{\delta_1,\ldots,\delta_n\}\setminus
  \{\delta_{n_1+\cdots +n_{j-1}+1},\ldots,\delta_{n_1+\cdots +n_j},
  \delta_{n_1+\cdots + n_{l-1}+1},\ldots,\delta_{n_1+\cdots +n_l}\}\rangle_F$$
of $M$ which is thus annihilated by $y_{n_j}'^+$. 
To summarize, we have shown that
$\dim(\ker_M(y_{n_j}'^+))=n-1=\dim(M')$.
Hence $\dim(\ker_{M'}(y_{n_j}'^+))\in\{\dim(M'),\dim(M')-1\}=\{n-1,n-2\}$ and 
$\dim(\ker_D(y_{n_j}'^+))=\dim(K_j)\in\{\dim(D), \dim(D)-1\}=\{n-2,n-3\}.$
But we already know that 
$0\neq \overline{\delta}_j^+\in\im_D(y_{n_j}'^+)=I_j$ so that 
$I_j=\langle\overline{\delta}_j^+\rangle_F$ and $\dim(K_j)=\dim(D)-1$, 
proving (i).  

\medskip
Now let $j=l$ and $n_l>2$. Then $Y_{n_l}'$ acts trivially on 
$\langle\delta_1,\ldots,\delta_{n_1+\ldots +n_{l-1}}\rangle_F$.
Furthermore, 
$\langle\delta_{n_1+\cdots +n_{l-1}+1},\ldots,
 \delta_{n_1+\ldots +n_l}\rangle_F$ 
is an $FY_{n_l}'$-module isomorphic to $FY_{n_l}'\oplus FY_{n_l}'$. 
Consequently, $\langle\delta_1,\ldots,\delta_{n_1+\ldots +n_{l-1}}\rangle_F$ 
is annihilated by $y_{n_l}'^+$, and 
$$\langle\delta_{n_1+\cdots +n_{l-1}+1},\ldots,
         \delta_{n_1+\ldots +n_l}\rangle_F\cdot y_{n_l}'^+
  \cong\Rad^{n_l-1}(FY_{n_l}')\oplus
       \Rad^{n_l-1}(FY_{n_l}')\cong F\oplus F.$$
Thus $\dim(\ker_M(y_{n_l}'^+))=n-2=\dim(M')-1$ so that 
$\dim(\ker_{M'}(y_{n_l}'^+))\geq \dim(M')-2=n-3$ and 
$\dim(K_l)=\dim(\ker_D(y_{n_l}'^+))\geq \dim(D)-2=n-4$. 
As we have already shown above, 
$\langle\overline{\delta}_{l'}^+,\overline{\delta}_{l''}^+\rangle_F
 \leq \im_D(y_{n_l}'^+)=I_l$. 
Since $l\geq 2$, we conclude that
$\overline{\delta}_{l'}^+$ and $\overline{\delta}_{l''}^+$ 
are linearly independent over $F$, and hence
$I_l=\langle \overline{\delta}_{l'}^+,\overline{\delta}_{l''}^+\rangle_F$ 
and $\dim(K_l)=n-4=\dim(D)-2$. This proves (ii),
and assertion (iii) is obviously true, since $y_{n_l}'=1$ for $n_l=2$.
\end{proof}

\begin{rem}\label{rem:formerly51}\normalfont
Let $l\geq 2$. We consider the subgroup 
$X_n=\prod_{j=1}^l X_{n_j}\leq\Phi(Q_n)\leq Q_n\leq P_n$.
Letting $Q\leq P_n$ be any subgroup such that $X_n\leq Q$,
we show when $X_n$ can be used to 
prove the indecomposability of $\res_Q^{\mathfrak{S}_n}(D)$.
Note that this is slightly more general than needed to prove
Theorem \ref{thm:main}, but will be useful in the proof of
Theorem \ref{thm:correction}.
Therefore, we fix an indecomposable direct sum decomposition of 
$\res_Q^{\mathfrak{S}_n}(D)$.

\medskip
(a) Suppose that $j\in\{1,\ldots,l\}$ such that $n_j>2$,
that is, we exclude just the case $j=l$ and $n_l=2$. Let 
$u:=\gamma_{n_1+\cdots+n_j}+\gamma_{n_1+\cdots +n_j+1}\in M'$ and 
$v:=\gamma_{n_1+\cdots +n_{j-1}+1}+\gamma_{n_1+\cdots +n_{j-1}+n_j/2+1}\in M'$
where the indices should be read modulo $n$. Then
\begin{equation}\label{equ:xuv}
ux_{n_j}^+=\gamma_{j''}^+=\delta_{j''}^+\notin M''
\quad\text{ and }\quad vx_{n_j}^+=\gamma_j^+=\delta_j^+\notin M''.
\end{equation}
Furthermore, 
$\langle\gamma_{n_1+\cdots +n_{j-1}+1},\ldots,
 \gamma_{n_1+\dots +n_j}\rangle_F$ is an 
$FX_{n_j}$-module isomorphic to $FX_{n_j}\oplus FX_{n_j}$, 
and $X_{n_j}$ acts trivially on 
$\langle \gamma_1,\ldots,\gamma_{n_1+\dots +n_{j-1}},
 \gamma_{n_1+\dots +n_j+1},\ldots,\gamma_n\rangle_F$. 
Hence 
$$\im_M(x_{n_j}^+)=\Soc(\langle\gamma_{n_1+\cdots +n_{j-1}+1},\ldots,
  \gamma_{n_1+\dots +n_j}\rangle_F)\cong F\oplus F,$$ 
and $\dim(\ker_M(x_{n_j}^+))=n-2$. 
This implies $\dim(\ker_{M'}(x_{n_j}^+))\geq \dim(M')-2$ and thus
$\dim(\ker_{D}(x_{n_j}^+))\geq \dim(D)-2$. Since, by (\ref{equ:xuv}),
$\overline{\gamma}_{j'}^+,\overline{\gamma}_{j''}^+\in \im_D(x_{n_j}^+)$,
we get $\dim(\ker_D(x_{n_j}^+))=\dim(D)-2=n-4$ and
$\im_D(x_{n_j}^+)=\langle \overline{\gamma}_{j'}^+,
\overline{\gamma}_{j''}^+\rangle_F$.
Hence we have $(FX_{n_j}\oplus FX_{n_j})|\res_{X_{n_j}}^{\mathfrak{S}_n}(D)$,
and either precisely one or precisely two indecomposable 
summands in the fixed direct sum decomposition 
of $\res_Q^{\mathfrak{S}_n}(D)$ are not annihilated by $x_{n_j}^+$.
 
\medskip
(b) Suppose that there are two of these, $U'$ and $U''$, say. 
Recall further that we are still assuming $n_j>2$.
Then 
there are some $a',a'',b',b''\in F$ such that
$0\neq u':=a'\overline{\gamma}_{j'}^++a''\overline{\gamma}_{j''}^+\in U'$
and
$0\neq u'':=b'\overline{\gamma}_{j'}^++b''\overline{\gamma}_{j''}^+\in U''$.
In particular, $u'$ and $u''$ are linearly independent. 
Next consider $Q^{p_j q_j}$, where 
$$p_j:P_n\longrightarrow P_{n_j}\quad \text{ and } \quad 
  q_j:P_{n_j}\longrightarrow P_{n_j}/B_{n_j}\cong\langle w_{n_j,j}\rangle$$ 
are the natural epimorphisms. We have either
$Q^{p_j q_j}=\langle w_{n_j,j}\rangle$ or $Q^{p_j q_j}=\{1\}$. 
Assume that $Q^{p_j q_j}=\langle w_{n_j,j}\rangle$. Then there
exists some $g\in Q$ such that $\gamma_{j'}^+g=\gamma_{j''}^+$ and 
$\gamma_{j''}^+g=\gamma_{j'}^+$. If $a'\neq a''$ then
$u'+u'g=(a'+a'')\overline{\gamma}_j^+\in U'$ and thus also 
$\overline{\gamma}_j^+\in U'$. If $a'=a''$ then
$0\neq a'\overline{\gamma}_j^+\in U'$, 
and again $\overline{\gamma}_j^+\in U'$. But, analogously, we deduce
$\overline{\gamma}_j^+\in U''$, yielding the contradiction 
$0\neq \overline{\gamma}_j^+\in U'\cap U''$. 
Consequently, this forces $Q^{p_j q_j}=\{1\}$, that is $Q\leq\ker(p_j q_j)$.

\medskip
(c) Now suppose that $n_l>2$ and that, for each $j=1,\ldots,l$, 
there is exactly one 
indecomposable direct summand $U_j$ in the fixed direct sum decomposition
of $\res_Q^{\mathfrak{S}_n}(D)$
which is not annihilated by $x_{n_j}^+$. We thus deduce
$\overline{\gamma}_j^+\in \langle\overline{\gamma}_{j'}^+,
 \overline{\gamma}_{j''}^+\rangle_F=\im_D(x_{n_j}^+)\leq U_j$,
for $j=1,\ldots,l$. Since 
$\sum_{j=1}^l\overline{\gamma}_j^+=\overline{\gamma}^+=0$, 
this implies $U_1=\ldots =U_l$. 

\medskip
Next we show that 
$\Soc(\res_{X_n}^{\mathfrak{S}_n}(D))\leq 
 \res_{X_n}^Q(U_1)+\cdots +\res_{X_n}^Q(U_l)=\res_{X_n}^Q(U_1)$, 
which then implies that $\res_{Q}^{\mathfrak{S}_n}(D)=U_1$ is indecomposable.
For this, let $v\in M'$. Then 
$\overline{v}\in\Soc(\res_{X_n}^{\mathfrak{S}_n}(D))$ 
if and only if $\overline{v}x_{n_j}=\overline{v}$, for all $j=1,\ldots,l$, 
that is, if and only if for each $j\in\{1,\ldots,l\}$ 
there is some $a_j\in F$ such that $vx_{n_j}=v+a_j\gamma^+$.
Let $j\in\{1,\ldots,l\}$.
Since $X_{n_j}$ acts trivially on $\Omega\setminus \Omega_j\neq \emptyset$, 
this condition is actually equivalent to $vx_{n_j}=v$. 
That is, $\overline{v}\in D$ is fixed by $X_{n_j}$ if and only $v\in M'$ is. 
Since
$\langle\gamma_{n_1+\cdots +n_{j-1}+1},\ldots,\gamma_{n_1+\dots +n_j}
 \rangle_F$ 
is isomorphic to $FX_{n_j}\oplus FX_{n_j}$ as an $FX_{n_j}$-module, we get
$$\Soc(\res_{X_n}^{\mathfrak{S}_n}(M))=\{v\in M\mid vx_{n_j}=v 
  \text{ for all } j=1,\ldots,l\}=\bigoplus_{j=1}^l
  \langle\gamma_{j'}^+,\gamma_{j''}^+\rangle_F
  \leq \res_{X_n}^{\mathfrak{S}_n}(M').$$
Consequently, 
$\Soc(\res_{X_n}^{\mathfrak{S}_n}(M))=
 \Soc(\res_{X_n}^{\mathfrak{S}_n}(M'))=
 \bigoplus_{j=1}^l\langle\gamma_{j'}^+,\gamma_{j''}^+\rangle_F$, 
and thus indeed
$$\Soc(\res_{X_n}^{\mathfrak{S}_n}(D))
=\langle \overline{\gamma}_{1'}^+,\overline{\gamma}_{1''}^+,\ldots,
\overline{\gamma}_{l'}^+,\overline{\gamma}_{l''}^+\rangle_F\leq 
\res_{X_n}^Q(U_1)+\cdots +\res_{X_n}^Q(U_l).$$
\end{rem}

\section{The case $l\geq 2$ and $n_l>2$}\label{sec:nlgt2}

The behaviour of the natural $F\mathfrak{A}_n$-module
$E$ upon restriction to the abelian subgroup $Y_n'$ of $Q_n$ depends
on whether $n_l>2$ or $n_l=2$. Therefore, we will now distinguish
between these two cases, and start off with the case $n_l>2$.

\begin{prop}
Let $l\geq 2$ and $n_l>2$. Then $E$ has vertex $Q_n$ and source
$\res_{Q_n}^{\mathfrak{A}_n}(E)$.
\end{prop}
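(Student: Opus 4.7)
The first task is indecomposability of $\tilde{E}:=\res_{Q_n}^{\mathfrak{A}_n}(E)=\res_{Q_n}^{\mathfrak{S}_n}(D)$, which I would obtain directly from Remark \ref{rem:formerly51}. Since $n_l>2$ forces $n_j\geq 4$ for every $j\in\{1,\ldots,l\}$, each generator $w_{n_j,j}$ is a product of $n_j/2$ (an even number of) transpositions supported on $\Omega_j$ and so lies in $Q_n$. Therefore $Q_n^{p_jq_j}=\langle w_{n_j,j}\rangle\neq\{1\}$ for every $j$, so part (b) of that remark excludes the possibility of two indecomposable summands of $\res_{Q_n}^{\mathfrak{S}_n}(D)$ failing to be annihilated by $x_{n_j}^+$. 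Part (c) then immediately yields that $\tilde{E}$ is indecomposable.

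Now I would follow the general strategy of Remark \ref{rem:strategy}. Assume toward a contradiction that a vertex $Q$ of $E$ satisfies $Q<Q_n$ and choose a maximal subgroup $R<Q_n$ with $Q\leq R$ such that $\tilde{E}$ is relatively $R$-projective. Then $\langle\Phi(Q_n),Q_{n-4},Q\rangle\leq R$, and Green's Indecomposability Theorem gives $\tilde{E}\cong\ind_R^{Q_n}(V)$ for some indecomposable summand $V$ of $\res_R^{Q_n}(\tilde{E})$, whence
\begin{equation*}
\res_R^{Q_n}(\tilde{E})=V\oplus V^g
\end{equation*}
for any $g\in Q_n\setminus R$, with $\dim V=\dim V^g=(n-2)/2$.

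The key tool for ruling out this decomposition is Proposition \ref{prop:bassoc}(ii), which provides the two-dimensional socle $\Soc(\res_{Y_n'}^{\mathfrak{A}_n}(E))=\langle \overline{\delta}_0^+,\overline{\delta}_1^+\rangle_F$, together with Proposition \ref{prop:ker}(ii) describing $\im_D(y_{n_l}'^+)=\langle\overline{\delta}_{l'}^+,\overline{\delta}_{l''}^+\rangle_F$. Once $Y_n'\leq R$, the decomposition $V\oplus V^g$ is also a decomposition of $FY_n'$-modules, so the two summands each have a one-dimensional $Y_n'$-socle summing to $\langle\overline{\delta}_0^+,\overline{\delta}_1^+\rangle_F$, and the propositions from Section~\ref{sec:l2} pin down these two socle lines explicitly as linear combinations of $\overline{\delta}_0^+$ and $\overline{\delta}_1^+$.

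The main obstacle is twofold. First, I need to verify that $Y_n'$ is in fact forced into every candidate $R$ by the constraints $\Phi(Q_n)\leq R$ and $Q_{n-4}\leq R$: the squares $x_{n_j}=y_{n_j}^2$ lie in $\Phi(Q_n)$ by Proposition \ref{prop:frattini}, and the remaining generator information on each $y_{n_j}'$ should be supplied by $Q_{n-4}$ together with $Q\leq R$, possibly requiring a short case analysis according to the value of $n_l\geq 4$. Second, I must produce an explicit element $r\in R$ whose action on one of the two identified socle lines yields a vector outside $\langle\overline{\delta}_0^+,\overline{\delta}_1^+\rangle_F$, contradicting $R$-invariance of $\Soc_{Y_n'}(\tilde{E})$. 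I expect $r$ to be a suitable product of the $w_{n_j,j}$'s, all of which lie in $R$ since they are involutions coming from $Q_{n-4}$ together with a commutator correction in $\Phi(Q_n)$, chosen so as to swap basis vectors $\gamma_i$ across the two halves $\Omega_j'$ and $\Omega_j''$ of some $\Omega_j$. This mimics the concluding contradiction of Proposition \ref{prop:vertex1}, and identifying such an $r$ uniformly for every admissible $R$ is where the technical heart of the argument lies.
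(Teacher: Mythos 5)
Your first step is fine and agrees with the paper: since $n_l>2$ every $w_{n_j,j}$ is an even permutation, so $Q_n^{p_jq_j}=\langle w_{n_j,j}\rangle\neq 1$, and Remark \ref{rem:formerly51}(b),(c) give indecomposability of $\res_{Q_n}^{\mathfrak{A}_n}(E)$. The vertex argument, however, has a genuine gap: it is not a worked proof but a plan whose two key steps are left open, and the first of them is false in general. You hope to force $Y_n'\leq R$ from $\langle\Phi(Q_n),Q_{n-4},Q\rangle\leq R$, but the generators $y_{n_j}'=y_{n_j}y_{n_l}$ ($j<l$) act as odd permutations on the blocks $\Omega_j$ and $\Omega_l$, whereas every element of $Q_{n-4}\Phi(Q_n)$ is even on each block; e.g.\ for $n=12=8+4$ the subgroup $Q_8\Phi(Q_{12})$ has index $4$ in $Q_{12}$ and $y_8y_4$ lies in at most one of the three maximal subgroups above it. (Compare the paper's own list $R_1',R_2',R_3'$ in the case $l=2$, $n_l=2$: only $R_1'$ contains $y_{n_1}'$.) So for most admissible $R$ the socle-of-$\res_{Y_n'}$ argument you sketch cannot even get started, and your second step (an explicit $r\in R$ moving a socle line) is likewise not exhibited.

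The paper avoids all of this and never uses $Y_n'$ in this case. After fixing $R$ maximal with $\Phi(Q_n)\leq R$ and $Q_{n-4}\leq R$, Green's Indecomposability Theorem (via your decomposition $\res_R^{Q_n}(\tilde E)=V\oplus V^g$ with $\dim V=(n-2)/2$) is played off against Remark \ref{rem:formerly51}(a),(b) applied to $x_{n_1}^+$ only: if exactly one indecomposable summand of $\res_R^{\mathfrak{A}_n}(E)$ is not annihilated by $x_{n_1}^+$, then $(FX_{n_1}\oplus FX_{n_1})\mid\res_{X_{n_1}}^{R}$ of that summand forces its dimension to be at least $n_1\geq(n+2)/2>(n-2)/2$, contradicting the Green decomposition; if there are two such summands, then part (b) forces $R\leq\ker(p_1q_1)$, contradicting $w_{n_1,1}\in Q_{n_1}\leq Q_{n-4}\leq R$ (here $n_l\geq 4$ is used to get $Q_{n_1}\leq Q_{n-4}$). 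If you want to salvage your outline you should replace the $Y_n'$-socle analysis by this dimension-versus-$\ker(p_1q_1)$ dichotomy, which works uniformly for every admissible $R$ without identifying $R$ explicitly.
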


\begin{proof}
We follow the strategy given in Remark \ref{rem:strategy},
and assume that $R<Q_n$ is a maximal subgroup such that $E$ is relatively 
$R$-projective, hence
$X_n=\prod_{j=1}^l X_{n_j}\leq\Phi(Q_n)\leq R$ and $Q_{n-4}\leq R$.  

\medskip
First of all, we fix an indecomposable direct sum decomposition of
$\res_{Q_n}^{\mathfrak{A}_n}(E)$.
Then, for each $j=1,\ldots,l$, we have 
$Q_n^{p_j q_j}=\langle w_{n_j,j}\rangle \neq 1$;
note that here we need $n_l\geq 4$.
Therefore, in consequence of Remark \ref{rem:formerly51}, part (b),
we obtain that, for each $j=1,\ldots,l$, there is precisely 
one indecomposable direct summand $U_j$ in the decomposition 
which is not annihilated by $x_{n_j}^+$. 
Hence, Remark \ref{rem:formerly51}, part (c), shows that 
$\res_{Q_n}^{\mathfrak{A}_n}(E)$ is, in fact, indecomposable.

\medskip
Now we fix an indecomposable direct sum decomposition of
$\res_R^{\mathfrak{A}_n}(E)$. Hence, by Remark \ref{rem:formerly51}, 
part (a), there is either one indecomposable direct summand $U$ or
there are exactly two indecomposable direct summands $U'$ and $U''$
in the decomposition which are not annihilated by $x_{n_1}^+$. 
In the first case we get
$(FX_{n_1}\oplus FX_{n_1})|\res_{X_{n_1}}^{R}(U)$, hence 
$\dim(U)\geq 2\dim(FX_{n_1})=n_1\geq (n+2)/2>(n-2)/2=\dim(E)/2$,
a contradiction.
In the second case Remark \ref{rem:formerly51}, part (b), 
implies that $R\leq\ker(p_1 q_1)$.
But, since $l\geq 2$ and $n_l\geq 4$, we have 
$w_{n_1,1}\in Q_{n_1}\leq Q_{n-4}\leq R$ and $w_{n_1,1}\notin\ker(p_1 q_1)$, 
a contradiction.
\end{proof} 

\medskip\normalfont
It remains to deal with the case $n_l=2$. It turns out that $E$
behaves differently upon restriction to $Y_n'$, depending on the value
of $l\geq 2$. In the following section we begin with the generic case 
$l\geq 4$, while the exceptional cases $l\leq 3$ will be settled in 
subsequent sections.


\section{The case $n_l=2$ and $l\geq 4$}\label{sec:nl2lgeq4}
\normalfont

\begin{lemma}\label{lemma:l4nl2indec}
Let $n_l=2$ and $l\geq 4$. 
Then $\res_{Y_n'}^{\mathfrak{A}_n}(E)$ is indecomposable.
\end{lemma}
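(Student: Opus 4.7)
The plan is to assume for contradiction that $\res_{Y_n'}^{\mathfrak{A}_n}(E)=M'/M''$ admits a non-trivial decomposition $U_1\oplus U_2$ as $FY_n'$-modules, and to derive a contradiction by analysing how the socle elements are distributed among the two summands. By Proposition \ref{prop:ker}(i), for each $j\in\{1,\ldots,l-1\}$ the image $\im_E(y_{n_j}'^+)=\langle\overline{\delta}_j^+\rangle_F$ is one-dimensional, so in the decomposition $E=U_1\oplus U_2$ it lies entirely in one of the summands. Thus each $\overline{\delta}_j^+$ belongs to exactly one of $\Soc U_1,\Soc U_2$, yielding a partition $\{1,\ldots,l-1\}=J_1\sqcup J_2$ with $\overline{\delta}_j^+\in\Soc U_i$ precisely when $j\in J_i$; comparing dimensions with Proposition \ref{prop:bassoc}(i) moreover forces $\Soc U_i=\langle\overline{\delta}_j^+:j\in J_i\rangle_F$. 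The goal is then to show that this partition is trivial.

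The key is to exploit the relation $\overline{\delta}_l^+=\sum_{k=1}^{l-1}\overline{\delta}_k^+$ in $\Soc E$ (coming from $\overline{\gamma}^+=0$) via a single element whose image under $y_{n_j}'-1$ hits $\overline{\delta}_l^+$. Fix any $k_0\in\{1,\ldots,l-1\}$ and any $i_0\in\Omega_{k_0}$, and set $u:=\delta_{n-1}+\delta_{i_0}\in M'$. For each $j\in\{1,\ldots,l-1\}\setminus\{k_0\}$, since $y_{n_j}'=y_{n_j}y_{n_l}$ swaps $\delta_{n-1}$ with $\delta_n$ and fixes $\delta_{i_0}$, a direct computation gives $u(y_{n_j}'-1)=\delta_l^+$ and hence $\overline{u}(y_{n_j}'-1)=\overline{\delta}_l^+$ in $E$. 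Writing $\overline{u}=u^{(1)}+u^{(2)}$ with $u^{(i)}\in U_i$, uniqueness of the decomposition $\overline{\delta}_l^+=\sum_{k\in J_1}\overline{\delta}_k^+ + \sum_{k\in J_2}\overline{\delta}_k^+$ yields $u^{(1)}(y_{n_j}'-1)=\sum_{k\in J_1}\overline{\delta}_k^+$.

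The main technical step is to identify $\im_E(y_{n_j}'-1)\cap\Soc E$. Viewing $M$ via its $Y_n'$-invariant block decomposition $M=\bigoplus_{k=1}^l\langle\delta_i:i\in\Omega_k\rangle_F$, the operator $y_{n_j}'-1$ acts as zero on the blocks $k\neq j,l$, as a nilpotent generator of the regular representation on block $j$, and as the nonzero nilpotent element of $F\langle y_{n_l}\rangle$ on block $l$; hence $M'(y_{n_j}'-1)\cap\Soc M=\langle\delta_j^+,\delta_l^+\rangle_F$. Since $\gamma^+\notin\langle\delta_j^+,\delta_l^+\rangle_F$ (as $l\geq 3$), passing to $E=M'/M''$ identifies $\im_E(y_{n_j}'-1)\cap\Soc E$ as the $2$-dimensional subspace $\langle\overline{\delta}_j^+,\overline{\delta}_l^+\rangle_F$. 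Writing $\sum_{k\in J_1}\overline{\delta}_k^+=a\overline{\delta}_j^+ + b\overline{\delta}_l^+$ for some $a,b\in F$ and matching coefficients of $\overline{\delta}_m^+$ in the basis $\{\overline{\delta}_k^+:k<l\}$ forces these coefficients all to equal $b$ for every $m\neq j$, so that either $J_1\subseteq\{j\}$ or $J_2\subseteq\{j\}$.

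Finally, the hypothesis $l\geq 4$ ensures $|\{1,\ldots,l-1\}\setminus\{k_0\}|\geq 2$, so we may pick two distinct elements $j_1,j_2$ in this set. Applying the dichotomy at both and examining the four possible combinations, one finds in every case that either $J_1\subseteq\{j_1\}\cap\{j_2\}=\emptyset$, or symmetrically $J_2=\emptyset$, or else the size estimate $|J_1|+|J_2|\leq 2$ contradicts $|J_1|+|J_2|=l-1\geq 3$. In all cases the assumed non-trivial decomposition is impossible, proving that $\res_{Y_n'}^{\mathfrak{A}_n}(E)$ is indecomposable. The main obstacle is the precise identification of the intersection $\im_E(y_{n_j}'-1)\cap\Soc E$; once the two-dimensional space $\langle\overline{\delta}_j^+,\overline{\delta}_l^+\rangle_F$ is available, the combinatorial conclusion via two distinct choices of $j$ is routine.
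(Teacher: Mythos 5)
Your proposal is correct, and it takes a genuinely different route from the paper's. Both arguments start from the same two ingredients, Propositions \ref{prop:bassoc}(i) and \ref{prop:ker}(i): the socle of $\res_{Y_n'}^{\mathfrak{A}_n}(E)$ has basis $\{\overline{\delta}_j^+\mid j<l\}$, and each $\overline{\delta}_j^+$ spans $\im_E(y_{n_j}'^+)$, hence lies in a single summand of any direct sum decomposition. The paper then works with the summands $V_1,\ldots,V_{l-1}$ themselves, passing to preimages $\hat{V}_j\leq M'$ and running an element-chasing analysis via the block projections $\pi_i\colon M\to M_{n_i}$ (injectivity of $\pi|_{\hat{V}_1}$, the identification $\hat{V}_1\cap\ker(\pi')=\langle v_0\rangle_F$, surjectivity of $\pi'|_{\hat{V}_1}$) before reaching a contradiction. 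You instead encode a hypothetical decomposition $E=U_1\oplus U_2$ by the induced partition $J_1\sqcup J_2$ of $\{1,\ldots,l-1\}$ and exploit one vector $u=\delta_{n-1}+\delta_{i_0}$: for $j\neq k_0$ one has $u(y_{n_j}'-1)=\delta_l^+$, while any element of $\im_E(y_{n_j}'-1)$ has a representative in $M(y_{n_j}'-1)$, supported on $\Omega_j\cup\Omega_l$, so that (since by Proposition \ref{prop:bassoc} the preimage of $\Soc(\res_{Y_n'}^{\mathfrak{A}_n}(E))$ in $M'$ is $\langle\delta_1^+,\ldots,\delta_l^+\rangle_F\supseteq M''$) one gets $\im_E(y_{n_j}'-1)\cap\Soc(\res_{Y_n'}^{\mathfrak{A}_n}(E))\subseteq\langle\overline{\delta}_j^+,\overline{\delta}_l^+\rangle_F$ --- note that only this containment, not the equality you state, is needed, and it follows from the support observation alone. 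Comparing coefficients then forces $J_1\subseteq\{j\}$ or $J_2\subseteq\{j\}$ for every admissible $j$, and two distinct choices of $j$ (available exactly because $l\geq 4$) rule out all cases, since $J_1,J_2$ are nonempty and $|J_1|+|J_2|=l-1\geq 3$. Your argument is shorter and makes transparent where $l\geq 4$ enters (for $l=3$ your dichotomy is consistent with the genuine decomposition of Lemma \ref{lemma:l3nl2}); what the paper's longer computation buys is the projection machinery $\pi_i$, which it reuses in the proof of Proposition \ref{prop:l3nl2}.
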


\begin{proof}
We fix an indecomposable direct sum decomposition of 
$\res_{Y_n'}^{\mathfrak{A}_n}(E)$. For $j\in\{1,\ldots,l-1\}$, 
by Proposition \ref{prop:ker}, there is precisely one 
indecomposable direct summand $V_j$ in this decomposition
which is not annihilated by $y_{n_j}'^+$.
We denote the preimage of $V_j$ under the natural epimorphism
$M'\longrightarrow M'/M''$ by $\hat{V}_j$.
From $\overline{\gamma}_j^+\in V_j$, for $j=1,\ldots,l-1$, 
we by Proposition \ref{prop:bassoc} conclude that
$\Soc(\res_{Y_n'}^{\mathfrak{A}_n}(E))\leq V_1+\cdots +V_{l-1}$,
so that $\res_{Y_n'}^{\mathfrak{A}_n}(E)=V_1+\cdots +V_{l-1}$.
Hence it suffices to show that $V_1=\ldots =V_{l-1}$. 
Suppose, for a contradiction,
that this is not the case. After appropriate re-numbering, we may suppose that
$V_1=\ldots =V_k$ and $V_{k+1}\neq V_1,\ldots, V_{l-1}\neq V_1$.
Furthermore, since $l\geq 4$, we may also suppose that $k<l-2$.

\medskip
For $i=1,\ldots, l$, we have the $FP_n$-epimorphism
$$\pi_i:M\longrightarrow M_{n_i}:=\langle
\gamma_{n_1+\cdots +n_{i-1}+1},\ldots,\gamma_{n_1+\cdots +n_i}\rangle_F,$$
and we consider 
$\pi:=\bigoplus_{j=1}^k\pi_j\oplus \pi_l: 
M\longrightarrow \bigoplus_{j=1}^kM_{n_j}\oplus M_{n_l} $.
For $j=1,\ldots,l$ and $v\in M_{n_j}$, set 
$h_j(v):=\min\{e\in\mathbb{N}_0\mid v\in \ker_{M_{n_j}}((y_{n_j}+1)^e)\}$;
note that
$h_j(v)=\min\{e\in\mathbb{N}_0\mid v\in \ker_{M_{n_j}}((y_{n_j}'+1)^e)\}$ 
whenever $j\leq l-1$.  

\medskip
Now assume that there is some $0\neq\tilde{v}\in\hat{V}_1\cap\ker(\pi)$. 
If $h_j(\tilde{v}^{\pi_j})\leq 1$ for all $j\in\{k+1,\ldots,l-1\}$,
then $\tilde{v}^{\pi_j}=a_j\gamma_j^+$, for some $a_j\in F$, and hence 
$0\neq\tilde{v}\in \hat{V}_1\cap (\hat{V}_{k+1}+\cdots +\hat{V}_{l-1})
=\langle\gamma^+\rangle_F$.
If there is some $j\in\{k+1,\ldots,l-1\}$ 
such that $h_j(\tilde{v}^{\pi_j})>1$, then let
$v:=\tilde{v}(y_{n_j}'+1)^{h_j(\tilde{v}^{\pi_j})-1}
 \in (\hat{V}_1\cap\ker(\pi))\setminus\{0\}$,
hence $v^{\pi_r}=0$ for $r\in\{k+1,\ldots,l-1\}\setminus\{j\}$, 
and $v^{\pi_j}=a\gamma_j^+$, for some $0\neq a\in F$,
thus $0\neq v\in \hat{V}_1\cap \hat{V}_j=\langle\gamma^+\rangle_F$. 
In both cases, $\gamma^+\notin\ker(\pi)$ yields a contradiction. 
Consequently, $\pi_{|\hat{V}_1}$ is injective.

\medskip
Next we consider the $FP_n$-epimorphism 
$\pi':=\bigoplus_{j=1}^k\pi_j:M\longrightarrow\bigoplus_{j=1}^kM_{n_j}$.
Assume that there
is some $0\neq\tilde{v}\in \hat{V}_1\cap\ker(\pi')$. 
We write $\tilde{v}=\sum_{i=1}^na_i\delta_i$, for appropriate
$a_1,\ldots, a_n\in F$. By the injectivity of $\pi_{|\hat{V}_1}$ we have
$\tilde{v}\notin\ker(\pi)$, that is, $a_{n-1}\neq 0$ or $a_n\neq 0$. We set 
$v_0:=\sum_{j=k+1}^l\gamma_j^+=\sum_{j=k+1}^l\delta_j^+$, 
then $v_0\in\ker(\pi')$. 
Moreover, $v_0=\gamma^++\sum_{j=1}^k\gamma_j^+$, and thus
also $v_0\in\hat{V}_1$. 
If $a_{n-1}=a_n\neq 0$, then we may suppose that $a_{n-1}=1=a_n$, and thus
$$ \tilde{v}+v_0=\sum_{j=n_1+\cdots+n_k+1}^{n_1+\cdots +n_{l-1}}(a_j+1)\delta_j
   \in\hat{V}_1\cap\ker(\pi)=\{0\},$$
hence $\tilde{v}=\sum_{j=k+1}^l\gamma_j^+=v_0$. 
Now assume that $a_{n-1}\neq a_n$. 
Let $j\in\{k+1,\ldots,l-1\}$. Then
\begin{align*}
0\neq v&:=\tilde{v}(y_{n_j}'+1)
=(\sum_{i=n_1+\cdots +n_{j-1}+2}^{n_1+\cdots +n_j}(a_i+a_{i-1})\delta_i)\\
&
+(a_{n_1+\cdots+n_{j-1}+1}+a_{n_1+\cdots +n_j})\delta_{n_1+\cdots +n_{j-1}+1}
+(a_{n-1}+a_n)(\delta_{n-1}+\delta_n)\in\hat{V}_1\cap\ker(\pi').
\end{align*}
Hence we have $v=bv_0$, for some $0\neq b\in F$, 
which in turn means that $j=k+1=l-1$. 
But, since $k<l-2$, this is a contradiction.
Hence we actually have $a_{n-1}=a_n\neq 0$,
and thus $\hat{V}_1\cap\ker(\pi')=\langle v_0\rangle_F$. 

\medskip
We now show that $\pi'_{|\hat{V}_1}$ is surjective:
For $j\in\{k+1,\ldots,l-1\}$ and $i\in\{1,\ldots,k\}$
we have $\hat{V}_j^{\pi_i}\leq\Rad(\res_{Y_{n_i}'}^{P_n}(M_{n_i}))$,
since otherwise taking 
$w\in\hat{V}_j$ such that
$w^{\pi_i}\in M_{n_i}\setminus\Rad(\res_{Y_{n_i}'}^{P_n}(M_{n_i}))$ 
yields $w(y_{n_i}'+1)^{n_i-1}=a\gamma_i^+\in\hat{V}_j$, for
some $0\neq a\in F$,
a contradiction.
Hence we have 
$$ (\hat{V}_{k+1}+\cdots+\hat{V}_{l-1})^{\pi'}\leq
\bigoplus_{i=1}^k\Rad(\res_{Y_{n_i}'}^{P_n}(M_{n_i}))
=\Rad(\res_{Y_n'}^{P_n}(\bigoplus_{i=1}^k M_{n_i})) .$$
From $\hat{V}_1+\cdots+\hat{V}_{l-1}=\res_{Y_n'}^{\mathfrak{S}_n}(M')$ we get
$(\hat{V}_1+\hat{V}_{k+1}+\cdots+\hat{V}_{l-1})^{\pi'}
=\res_{Y_n'}^{P_n}(\bigoplus_{i=1}^k M_{n_i})$,
hence $\hat{V}_1^{\pi'}=\res_{Y_n'}^{P_n}(\bigoplus_{i=1}^k M_{n_i})$.

\medskip
Thus there is some $w\in\hat{V}_1$ such that 
$w^{\pi'}=\gamma_1=\delta_1$. We write
$w=\delta_1+\sum_{i=n_1+\cdots +n_k+1}^n a_i\delta_i$, 
for some $a_{n_1+\cdots +n_k+1},\ldots,a_n\in F$. 
Furthermore, $\sum_{i=n_1+\cdots n_k+1}^na_i=1$, 
since $w\in\hat{V}_1\leq M'$.
Assume there is $j\in\{k+1,\ldots,l-1\}$
such that $h_j(w^{\pi_j})>1$. Then let 
$$ \tilde{w}:=w(y_{n_j}'+1)\in(\hat{V}_1\cap\ker(\pi'))\setminus\{0\}. $$
Hence we have $\tilde{w}^{\pi_j}\neq 0$, 
and $\tilde{w}^{\pi_r}=0$ for $j\neq r\in\{k+1,\ldots,l-1\}$,
while $\tilde{w}^{\pi_l}=(a_{n-1}+a_n)(\delta_{n-1}+\delta_n)$.
But $\hat{V}_1\cap\ker(\pi')=\langle v_0\rangle_F$ 
and our hypothesis $k<l-2$ yield a contradiction,
thus we have $h_j(w^{\pi_j})\leq 1$ for all $j\in\{k+1,\ldots,l-1\}$.
In particular, $\sum_{i=n_1+\cdots +n_{j-1}+1}^{n_1+\cdots +n_j}a_i=0$, for
$j\in\{k+1,\ldots,l-1\}$.
This forces $a_{n-1}+a_n=1$, that is $h_l(w^{\pi_l})=2$.
Thus, for any $j\in\{k+1,\ldots,l-1\}$, we get
$w(y_{n_j}'+1)=(a_{n-1}+a_n)(\delta_{n-1}+\delta_n)
\in(\hat{V}_1\cap\ker(\pi'))\setminus\{0\}$, contradicting
$\hat{V}_1\cap\ker(\pi')=\langle v_0\rangle_F$.
\end{proof}

\begin{prop}\label{prop:l4nl2}
Let $n_l=2$ and $l\geq 4$.
Then $E$ has vertex $Q_n$ and source $\res_{Q_n}^{\mathfrak{A}_n}(E)$.
\end{prop}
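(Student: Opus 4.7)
The plan is to reuse the argument of Section~\ref{sec:nlgt2} almost verbatim, with Lemma~\ref{lemma:l4nl2indec} taking over the role that Remark~\ref{rem:formerly51}(c) played there in establishing indecomposability; the eventual contradiction will again be extracted from the $j=1$ case of Remark~\ref{rem:formerly51}, which is unaffected by the hypothesis $n_l=2$. First, since $Y_n'\leq Q_n$ and $\res_{Y_n'}^{\mathfrak{A}_n}(E)$ is indecomposable by Lemma~\ref{lemma:l4nl2indec}, the restriction $\tilde{E}:=\res_{Q_n}^{\mathfrak{A}_n}(E)$ is itself indecomposable, since any non-trivial decomposition of $\tilde{E}$ would restrict further. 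Following Remark~\ref{rem:strategy}, I assume for a contradiction that $E$ has vertex $Q<Q_n$ and pick a maximal subgroup $R<Q_n$ with $Q\leq R$ such that $E$ is relatively $R$-projective; then $\Phi(Q_n),Q_{n-4}\leq R$. Green's Indecomposability Theorem now gives $\tilde{E}\cong\ind_R^{Q_n}(W)$ for some indecomposable $FR$-module $W$ of dimension $(n-2)/2$, so that $\res_R^{Q_n}(\tilde{E})=W\oplus W^g$ for any $g\in Q_n\setminus R$.

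I then apply Remark~\ref{rem:formerly51}(a) with $j=1$, which is legitimate since $n_1>2$: either precisely one, or both, of $W,W^g$ are not annihilated by $x_{n_1}^+$. If only one is, the distinguished summand contains $FX_{n_1}\oplus FX_{n_1}$ as a direct summand upon restriction to $X_{n_1}$, so its dimension is at least $n_1\geq(n+2)/2>(n-2)/2$, contradicting $\dim W=(n-2)/2$. If both summands are not annihilated, Remark~\ref{rem:formerly51}(b) forces $R\leq\ker(p_1q_1)$; but $l\geq 4$ and $n_l=2$ give $n_1\leq n-14$, so $w_{n_1,1}$ acts on $\Omega_1\subseteq\{1,\ldots,n-4\}$ as a product of $n_1/2\geq 8$ disjoint transpositions and is therefore an even permutation lying in $Q_{n-4}\leq R$. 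Yet $w_{n_1,1}$ projects onto the non-trivial generator of $P_{n_1}/B_{n_1}$, so $w_{n_1,1}\notin\ker(p_1q_1)$, a contradiction. This rules out every proper candidate for the vertex, so $Q=Q_n$ and $\tilde{E}$ is a source of $E$.

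The main obstacle is really Lemma~\ref{lemma:l4nl2indec} itself; with its conclusion available the argument becomes a close transcription of Section~\ref{sec:nlgt2}. The key conceptual point is that the two places in that earlier proof where $n_l=2$ would break down---Remark~\ref{rem:formerly51}(c) and the $j=l$ case of parts (a)--(b)---are both bypassed here by using the lemma for the initial indecomposability and by taking $j=1$ rather than $j=l$ when extracting the final contradiction.
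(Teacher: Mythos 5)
Your proposal is correct and follows the paper's own proof essentially verbatim: indecomposability of $\res_{Q_n}^{\mathfrak{A}_n}(E)$ from Lemma \ref{lemma:l4nl2indec}, then the strategy of Remark \ref{rem:strategy} with Green's Indecomposability Theorem, and the two cases of Remark \ref{rem:formerly51}(a)--(b) for $j=1$ settled by the bound $\dim(U)\geq n_1>\dim(E)/2$ and by $w_{n_1,1}\in Q_{n-4}\leq R$ versus $w_{n_1,1}\notin\ker(p_1q_1)$, respectively. The only point to make explicit is that $w_{n_1,1}$ lies in $Q_{n-4}$ not merely because it is an even permutation supported on $\{1,\ldots,n-4\}$ but because $P_{n_1}$ is a factor of $P_{n-4}$ under the standard embedding (the paper writes out $Q_{n-4}=(P_{n_1}\times\cdots\times P_{n_{l-2}}\times P_{2^{i_{l-1}-1}}\times\cdots\times P_2)\cap Q_n$ for exactly this reason).
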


\begin{proof}
We follow the strategy given in Remark \ref{rem:strategy}.
By Lemma \ref{lemma:l4nl2indec},
we already know that $\res_{Q_n}^{\mathfrak{A}_n}(E)$ is indecomposable.
Assume that $R<Q_n$ is a maximal subgroup such that $E$ is relatively
$R$-projective, hence
$X_n=\prod_{j=1}^l X_{n_j}\leq\Phi(Q_n)\leq R$ and $Q_{n-4}\leq R$.
We fix an indecomposable direct sum decomposition of
$\res_R^{\mathfrak{A}_n}(E)$.
By Remark \ref{rem:formerly51}, part (a),
there is either exactly one summand $U$ or there are exactly two summands
$U'$ and $U''$ in this decomposition which are 
not annihilated by $x_{n_1}^+$.
In the first case
$(FX_{n_1}\oplus FX_{n_1})|\res_{X_{n_1}}^{R}(U)$ implies
$\dim(U)\geq n_1>\dim(E)/2$, a contradiction.
In the second case,
Remark \ref{rem:formerly51}, part (b), implies that
$R\leq\ker(p_1 q_1)$. But since $l\geq 3$ and $n_l=2$ we have
$Q_{n-4}=(P_{n_1}\times\cdots\times P_{n_{l-2}}\times 
 P_{2^{i_{l-1}-1}}\times P_{2^{i_{l-1}-2}}\times\cdots 
 \times P_4\times P_2)\cap Q_n$,
thus $w_{n_1,1}\in Q_{n_1}\leq Q_{n-4}\leq R$, a contradiction.
\end{proof}


\section{The case $n_l=2$ and $l=3$}\label{sec:nl2l3}
\normalfont

\begin{lemma}\label{lemma:l3nl2}
Let $n_l=2$ and $l=3$. Then $\res_{Y_n'}^{\mathfrak{A}_n}(E)=U_1\oplus U_2$
where $U_1$ and $U_2$ are indecomposable of dimension $n_1$ and $n_2$, 
respectively, and both have vertex $Y_n'$.
\end{lemma}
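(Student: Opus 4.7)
The plan is to reduce everything to computations in the abelian group $Y_n'$. Since $n_l = 2$, we have $y_{n_l}'=y_{n_l}^2=1$, so $Y_{n_l}'=1$, and therefore $Y_n'=Y_{n_1}'\times Y_{n_2}'$ is abelian of order $n_1n_2$. By Proposition \ref{prop:bassoc}(i), applied with $l=3>2$, the socle of $\res_{Y_n'}^{\mathfrak{A}_n}(E)$ has $F$-basis $\{\overline{\delta}_1^+,\overline{\delta}_2^+\}$, and by Proposition \ref{prop:ker}(i) we have $\im_E(y_{n_j}'^+)=\langle\overline{\delta}_j^+\rangle$ for $j\in\{1,2\}$. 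In particular, $\dim\Soc(\res_{Y_n'}^{\mathfrak{A}_n}(E))=2$.

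Next I would exhibit two $Y_n'$-submodules $U_1,U_2\subseteq E$ of respective dimensions $n_1,n_2$ with $E=U_1\oplus U_2$ and $\overline{\delta}_j^+\in U_j$. Working in the basis $e_i:=\overline{\gamma_i+\gamma_n}$ ($i=1,\ldots,n-2$) of $E$, one observes that $S:=\overline{\gamma_{n-1}+\gamma_n}=\sum_i e_i$ is $Y_n'$-fixed and absorbs all the twist terms arising in the action: explicitly $e_i\, y_{n_1}'=e_{\sigma(i)}+S$ for $i\leq n_1$ (with $\sigma$ the natural $n_1$-cycle), $e_i\, y_{n_1}'=e_i+S$ for $n_1<i$, and symmetrically for $y_{n_2}'$. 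One then constructs $U_1,U_2$ by lifting the $Y_n'$-decomposition $\res_{Y_n'}^{\mathfrak{S}_n}(M)=M_1\oplus M_2\oplus M_3$ from Remark \ref{rem:xy}(b) into $M'/M''$, distributing the two socle vectors $\overline{\delta}_1^+,\overline{\delta}_2^+$ one to each summand. The main technical obstacle lies precisely here: naive choices yield $Y_n'$-invariant subspaces of dimensions $n_j+1$ whose pairwise intersection is the entire $2$-dimensional socle, so a delicate $1$-dimensional adjustment of each subspace (redistributing socle vectors against twist terms) is needed to produce a genuine direct sum.

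With such a decomposition in hand, each $U_j$ contains $\overline{\delta}_j^+$ in its socle; since $\Soc(E)=\Soc(U_1)\oplus\Soc(U_2)$ has dimension $2$, we obtain $\Soc(U_j)=\langle\overline{\delta}_j^+\rangle$ of dimension $1$. Because $FY_n'$ is a local $F$-algebra ($Y_n'$ being a $2$-group), any nonzero module with $1$-dimensional socle is indecomposable, so $U_1$ and $U_2$ are indecomposable of the prescribed dimensions.

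For the vertex assertion, note that since $y_{n_j}'^+$ acts nontrivially on $U_j$ with image $\langle\overline{\delta}_j^+\rangle$, the Loewy length of $\res_{Y_{n_j}'}^{Y_n'}(U_j)$ as $F\langle y_{n_j}'\rangle$-module attains the maximum $n_j=\dim U_j$, forcing $\res_{Y_{n_j}'}^{Y_n'}(U_j)\cong FY_{n_j}'$ to be free of rank~$1$. Suppose for contradiction that $U_j$ has a proper vertex $Q<Y_n'$; then by Green's indecomposability theorem $U_j\cong\ind_Q^{Y_n'}(S)$ for an indecomposable source $S$, and Mackey's formula (valid in the abelian setting with trivial conjugation) expresses $\res_{Y_{n_j}'}^{Y_n'}(U_j)$ as a multiplicity of a single $\ind_{Y_{n_j}'\cap Q}^{Y_{n_j}'}(\res_{Y_{n_j}'\cap Q}^Q S)$. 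Matching this with the indecomposable module $FY_{n_j}'$ forces $Y_{n_j}'\cap Q=Y_{n_j}'$, i.e.\ $Q\supseteq Y_{n_j}'$, and hence $Q=Y_{n_j}'\times K$ for some $K\lneq Y_{n_{3-j}}'$. But then Mackey applied to $\res_{Y_{n_{3-j}}'}^{Y_n'}(U_j)$ exhibits the latter as a single induced---hence indecomposable---$FY_{n_{3-j}}'$-module of dimension $n_j$, which is incompatible with the Loewy-type decomposition of $\res_{Y_{n_{3-j}}'}^{Y_n'}(E)$ (containing $FY_{n_{3-j}}'$ as a distinguished summand reserved for $U_{3-j}$). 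This contradiction shows $Q=Y_n'$, and so both $U_1$ and $U_2$ have vertex $Y_n'$.
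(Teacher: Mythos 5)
Your outline follows the paper's general shape, but at the two places where the real work happens it either stops short or makes an invalid inference. First, the decomposition $\res_{Y_n'}^{\mathfrak{A}_n}(E)=U_1\oplus U_2$ is never actually produced: you describe the obstacle (naive lifts of the $M_{n_j}$ give invariant subspaces of dimension $n_j+1$ meeting in the socle) and say that ``a delicate $1$-dimensional adjustment is needed'', but you do not carry it out, and everything afterwards -- the location of the socle vectors, $\Soc(U_j)=\langle\overline{\delta}_j^+\rangle_F$, indecomposability, and the freeness $\res_{Y_{n_j}'}^{Y_n'}(U_j)\cong FY_{n_j}'$ -- is conditional on that construction. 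This is exactly the content the paper supplies by writing down the explicit bases $\mathfrak{B}_1=\{b_1,\ldots,b_{n_1}\}$ and $\mathfrak{B}_2=\{\tilde{b}_1,\ldots,\tilde{b}_{n_2}\}$ of subspaces $M_1,M_2\leq M'$ (each $b_j$, $j\geq 2$, a sum of four $\delta$'s, and $b_1,\tilde{b}_1$ carrying the correcting ``alternating half-sum'' over the other block), checking $Y_n'$-invariance modulo $\langle\delta^+\rangle_F$ and that $M_1+M_2+\langle\delta^+\rangle_F=M'$.

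Second, the vertex argument is wrong at its key step. From $\res_{Y_{n_j}'}^{Y_n'}(U_j)\cong FY_{n_j}'$ and Mackey you conclude $Y_{n_j}'\leq Q$; but $FY_{n_j}'$ is induced from \emph{every} subgroup of $Y_{n_j}'$ (inducing a free rank-one module yields a free rank-one module), so the single Mackey term $\ind_{Y_{n_j}'\cap Q}^{Y_{n_j}'}(\res_{Y_{n_j}'\cap Q}^{Q}(S))$ can equal $FY_{n_j}'$ with $Y_{n_j}'\cap Q$ proper. Concretely, the maximal subgroups $Z_2=\langle y_{n_1}'y_{n_2}',(y_{n_2}')^2\rangle$ and $Z_3=\langle (y_{n_1}')^2,y_{n_2}'\rangle$ satisfy $Z_iY_{n_1}'=Y_n'$ and $Z_i\cap Y_{n_1}'<Y_{n_1}'$, so your criterion cannot exclude relative $Z_i$-projectivity of $U_1$ -- and these are precisely the hard cases. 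Your closing contradiction is also not sound: even granting $Q=Y_{n_1}'\times K$ with $K<Y_{n_2}'$, the restriction $\res_{Y_{n_2}'}^{Y_n'}(U_1)=\ind_K^{Y_{n_2}'}(\res_K(S))$ need not be indecomposable (only each indecomposable summand is induced from $K$), and in fact $(y_{n_2}')^2$ acts trivially on $U_1$ and $y_{n_2}'+1$ has rank $1$ there, so $\res_{Y_{n_2}'}^{Y_n'}(U_1)$ is a two-dimensional uniserial module plus $n_1-2$ trivial summands; there is no ``$FY_{n_2}'$ reserved for $U_2$'' obstruction of the kind you invoke. The paper's route is forced by exactly this: by Green's Indecomposability Theorem it suffices to show $U_1$ restricts indecomposably to each of the three maximal subgroups $Z_1,Z_2,Z_3$ of $Y_n'$; the case $Z_1$ is easy, while for $Z_2$ and $Z_3$ one passes to $\overline{Y}_n'=Y_n'/\langle(y_{n_2}')^2\rangle$ (returning to $Y_n'$ via \cite{Ku}) and proves by an explicit $2\times 2$ matrix computation over $F\overline{Y}_{n_1}''$ that $\End_{F\overline{Z}_2}(U_1)$ and $\End_{F\overline{Z}_3}(U_1)$ are local. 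An argument of that strength (or some substitute) is needed where your Mackey shortcut breaks down.
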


\begin{proof}
We construct the following subspaces $M_1$ and $M_2$ of $M'$: 
$M_1$ has $F$-basis $\mathfrak{B}_1:=\{b_1,\ldots,b_{n_1}\}$ with
$$b_1:=\delta_1+\delta_{n-1}+\sum_{i=0}^{n_2/2-1}\delta_{n_1+1+2i}
\quad\text{ and }\quad
b_j:=\delta_j+\delta_{j-1}+\delta_{n-1}+\delta_{n} ,$$
for $j=2,\ldots,n_1$. 
Obviously, $\mathfrak{B}_1$ is indeed linearly independent. Moreover, 
$M_1+\langle\delta^+\rangle_F$ is an $FY_n'$-module, since
$$ b_jy_{n_1}'=\begin{cases}
b_1+b_2,&\text{for } j=1,\\
b_{j+1},&\text{for } j=2,\ldots,n_1-1,\\
\sum_{i=2}^{n_1}b_i,&\text{for } j=n_1,
\end{cases} \,\,\,
b_jy_{n_2}'=\begin{cases}
b_1+\sum_{i=1}^{n_1/2}b_{2i}+\delta^+,&\text{for } j=1,\\
b_j,&\text{for } j=2,\ldots,n_1.
\end{cases} $$
Similarly, we define the subspace $M_2$ of $M'$ 
with basis $\mathfrak{B}_2:=\{\tilde{b}_1,\ldots,\tilde{b}_{n_2}\}$ where
$$ \tilde{b}_1:=\delta_{n_1+1}+\delta_{n-1}+\sum_{i=0}^{n_1/2-1}\delta_{1+2i}
\quad\text{ and }\quad
\tilde{b}_j:=\delta_{n_1+j}+\delta_{n_1+j-1}+\delta_{n-1}+\delta_{n}, $$
for $j=2,\ldots,n_2$.
Also $M_2+\langle\delta^+\rangle_F$ is an $FY_n'$-module and, by construction, $M_1+M_2+\langle\delta^+\rangle_F$
is an $FY_n'$-submodule of $\res_{Y_n'}^{\mathfrak{S}_n}(M')$. 
We show that $M_1+M_2+\langle\delta^+\rangle_F$
contains an $F$-basis of $M'$,
implying that we actually have
$M_1+M_2+\langle\delta^+\rangle_F=\res_{Y_n'}^{\mathfrak{S}_n}(M')$:
First of all, we have
$\delta_1^+=\sum_{i=1}^{n_1/2}b_{2i}\in M_1$ and
$\delta_2^+=\sum_{i=1}^{n_2/2}\tilde{b}_{2i}\in M_2$.
Thus 
$\delta_{n-1}+\delta_n=\delta_1^++\delta_2^++\delta^+
 \in M_1+M_2+\langle\delta^+\rangle_F$, 
and
$$\delta_i+\delta_{i+1}=\begin{cases}
b_{i+1}+\delta_{n-1}+\delta_n\in M_1+M_2+\langle\delta^+\rangle_F,&
\text{ for } i=1,\ldots,n_1-1,\\
\tilde{b}_{i+1}+\delta_{n-1}+\delta_n\in M_1+M_2+\langle\delta^+\rangle_F,&
\text{ for } i=n_1+1,\ldots,n_1+n_2-1.
\end{cases}$$
Hence we have
$s_1:=\sum_{i=0}^{n_2/2-1}\delta_{n_1+1+2i}
\in M_1+M_2+\langle\delta^+\rangle_F$ 
and $s_2:=\sum_{i=0}^{n_1/2-1}\delta_{1+2i}
\in M_1+M_2+\langle\delta^+\rangle_F$,
thus we get 
$\delta_{1}+\delta_{n-1}=b_1+s_1\in M_1+M_2+\langle\delta^+\rangle_F$ and
$\delta_{n_1+1}+\delta_{n-1}
=\tilde{b}_1+s_2\in M_1+M_2+\langle\delta^+\rangle_F$,
therefore,
$$ \res_{Y_n'}^{\mathfrak{A}_n}(E)
=\res_{Y_n'}^{\mathfrak{S}_n}(M'/M'')
=(M_1+M''/M'')+(M_2+M''/M''). $$
Letting $U_i:=M_i+M''/M''$, we from
$\dim(E)=n-2=n_1+n_2=\dim(M_1)+\dim(M_2)$ get 
$U_i\cong M_i$, for $i\in\{1,2\}$, and hence
$\res_{Y_n'}^{\mathfrak{A}_n}(E)=U_1\oplus U_2$.
For $j=1,\ldots,n_1$ we set $\overline{b}_j:=b_j+M''$, 
and for $j=1,\ldots,n_2$ we set
$\overline{\tilde{b}}_j:=\tilde{b}_j+M''$. Then 
$\overline{\mathfrak{B}}_1:=\{\overline{b}_1,\ldots,\overline{b}_{n_1}\}$
and 
$\overline{\mathfrak{B}}_2
 :=\{\overline{\tilde{b}}_{1},\ldots,\overline{\tilde{b}}_{n_2}\}$ 
are bases for $U_1$ and $U_2$, respectively.
Furthermore, both $U_1$ and $U_2$ are indecomposable. Namely,
$b_1y_{n_1}'^+=\delta_1^+=\sum_{i=1}^{n_1/2}b_{2i}\neq 0$ 
so that $M_1$ is not annihilated
by $y_{n_1}'^+$. In particular, $FY_{n_1}'\mid\res_{Y_{n_1}'}^{Y_n'}(M_1)$. 
Comparing dimensions, we deduce
$FY_{n_1}'\cong\res_{Y_{n_1}'}^{Y_n'}(M_1)$. 
In particular, $\res_{Y_{n_1}'}^{Y_n'}(M_1)$ and thus also $M_1\cong U_1$ 
is uniserial, hence indecomposable. 
The indecomposability of $U_2$ is proved analogously. 

\medskip
Next we show that $Y_n'$ is a vertex of both $U_1$ and $U_2$. 
For this, notice that $Y_n'$ possesses precisely three
maximal subgroups, these are
$Z_1:=\langle y_{n_1}',(y_{n_2}')^2\rangle$,
$Z_2:=\langle y_{n_1}'y_{n_2}',(y_{n_2}')^2\rangle$ and
$Z_3:=\langle(y_{n_1}')^2,y_{n_2}'\rangle$.
We show that neither $U_1$ nor $U_2$ can be
relatively $Z_i$-projective, for $i=1,2,3$.
By Green's Indecomposability Theorem,
it suffices to verify that $U_1$ and $U_2$ 
restrict indecomposably to each of these groups. We investigate $U_1$ first. 
By definition, $(y_{n_2}')^2$ acts trivially on $U_1$. 
That is, we may view $U_1$ as a module for the factor group
$\overline{Y}_n':=Y_n'/\langle (y_{n_2}')^2\rangle$ and show that
it restricts indecomposably to each of the maximal subgroups of 
$\overline{Y}_n'$. The latter are
in natural bijection with the maximal subgroups of $Y_n'$, thus are
$\overline{Z}_1=\langle \overline{y}_{n_1}'\rangle$,
$\overline{Z}_2=\langle \overline{y}_{n_1}'\overline{y}_{n_2}'\rangle$ and
$\overline{Z}_3=\langle (\overline{y}_{n_1}')^2,\overline{y}_{n_2}'\rangle$,
where $^{-}:Y_n'\longrightarrow \overline{Y}_n'$ 
denotes the natural epimorphism. As we have already mentioned, 
$\res_{Y_{n_1}'}^{Y_n'}(U_1)\cong FY_{n_1}'$. Hence also 
$\res_{\overline{Z}_1}^{\overline{Y}_n'}(U_1)\cong F\overline{Z}_1$ 
which is indecomposable.

\medskip
We consider 
$\res_{\overline{Z}_2}^{\overline{Y}_n'}(U_1)$ and
$\res_{\overline{Z}_3}^{\overline{Y}_n'}(U_1)$. 
For convenience, we replace the $F$-basis
$\overline{\mathfrak{B}}_1=\{\overline{b}_1,\ldots,\overline{b}_{n_1}\}$ 
of $U_1$ by 
$\overline{\mathfrak{B}}_1':=\{\overline{b}_1',\ldots,\overline{b}_{n_1}'\}$ 
where $\overline{b}_1':=\overline{b}_1$, and
$\overline{b}_j':=\overline{b}_{j-1}'+\overline{b}_j$, for $j=2,\ldots,n_1$. 
In other words,
$$\overline{b}_j'=\overline{\delta}_j+\sum_{i=0}^{n_2/2-1}
  \overline{\delta}_{n_1+1+2i}+\begin{cases}
\overline{\delta}_{n-1},&\text{ if } j \text{ is odd},\\
\overline{\delta}_{n},&\text{ if } j \text{ is even}.
\end{cases}$$
With this notation, we get
\begin{align*}
\overline{b}_j'\overline{y}_{n_1}'&
=\overline{b}_j'y_{n_1}'=\overline{b}_{j+1}',\text{ for }j=1,\ldots,n_1-1,\\
\overline{b}_{n_1}'\overline{y}_{n_1}'&
=\overline{b}_{n_1}'y_{n_1}'=\overline{b}_1',\\
\overline{b}_j'\overline{y}_{n_2}'&
=\overline{b}_j'y_{n_2}'=\sum_{i\neq j}\overline{b}_i',
\text{ for } j=1,\ldots,n_1.
\end{align*}
We set $\overline{Y}_{n_1}'':=\langle \overline{y}_{n_1}'^2\rangle$,
and notice that $\overline{Y}_{n_1}''\leq \overline{Z}_2$ and
$\overline{Y}_{n_1}''\leq \overline{Z}_3$.
Since 
$\res_{\overline{Y}_{n_1}'}^{\overline{Y}_n'}(U_1)\cong F\overline{Y}_{n_1}'$, 
we also have
$\res_{\overline{Y}_{n_1}''}^{\overline{Y}_n'}(U_1)\cong 
 F\overline{Y}_{n_1}''\oplus F\overline{Y}_{n_1}''$.
More precisely, 
$\res_{\overline{Y}_{n_1}''}^{\overline{Y}_n'}(U_1)=V_1\oplus V_2$ where 
$V_1:=\langle\overline{b}_i'\mid i\text{ odd }\rangle_F$ 
and $V_2:=\langle\overline{b}_i'\mid i\text{ even }\rangle_F$, and the maps
$V_1\longrightarrow F\overline{Y}_{n_1}'',\; 
 \overline{b}_{2i-1}'\longmapsto\overline{y}_{n_1}'^i$
and 
$V_2\longrightarrow F\overline{Y}_{n_1}'',\; 
\overline{b}_{2i}'\longmapsto\overline{y}_{n_1}'^i$,
for $i=1,\ldots,n_1/2$, are isomorphisms of $F\overline{Y}_{n_1}''$-modules.
Consequently, 
$$\mathcal{E}:=\End_{F\overline{Y}_{n_1}''}
  (\res_{\overline{Y}_{n_1}''}^{\overline{Y}_n'}(U_1))
  \cong\Mat(2,F\overline{Y}_{n_1}''),$$
and we will from now on simply identify 
$\mathcal{E}$ and $\Mat(2,F\overline{Y}_{n_1}'')$.
Moreover, $\mathcal{E}$ is a $\overline{Y}_n'$-algebra 
with respect to the conjugation action induced by the
natural embedding
$F\overline{Y}_n'\longrightarrow \mathcal{E}$.
Via this embedding, the elements $\overline{y}_{n_1}'^2$ and 
$\overline{y}_{n_1}'$ correspond to the endomorphisms
$$\begin{pmatrix} \overline{y}_{n_1}'^2&0\\0& \overline{y}_{n_1}'^2 
  \end{pmatrix}\quad \text{ and }\quad
\begin{pmatrix}0&1\\ \overline{y}_{n_1}'^2&0  \end{pmatrix},$$
respectively.
Now $\overline{y}_{n_2}'$ acts on $U_1$ as 
$1+(\overline{y}_{n_1}')^+=1+(\overline{y}_{n_1}'^2)^+
+\overline{y}_{n_1}'\cdot (\overline{y}_{n_1}'^2)^+$.
Letting $s:=(\overline{y}_{n_1}'^2)^+\in\Soc(F\overline{Y}_{n_1}'')$, 
the elements $\overline{y}_{n_2}'$ 
and $\overline{y}_{n_1}'\overline{y}_{n_2}'$
correspond to the endomorphisms
$$\psi:=\begin{pmatrix} 1+s & s\\ s& 1+s  \end{pmatrix}
\quad \text{ and }\quad 
\psi':=\begin{pmatrix} s & 1+s\\ \overline{y}_{n_1}'^2+s& s \end{pmatrix},$$
respectively.

\medskip
We now determine the elements in $\mathcal{E}$ which are fixed by 
$\overline{y}_{n_2}'$.
These are precisely the elements in $\End_{F\overline{Z}_3}(U_1)$. Let
$\varphi\in\mathcal{E}$ such that 
$\varphi=\begin{pmatrix} a& b\\c& d \end{pmatrix}$, for some 
$a,b,c,d\in F\overline{Y}_{n_1}''$. 
Then $\varphi\in\End_{F\overline{Z}_3}(U_1)$ if and only if 
$\varphi\psi=\psi\varphi$, or equivalently, if $(a+d)s=0=(b+c)s$.
That is,
$$\mathcal{F}:=\End_{F\overline{Z}_3}(U_1)
=\left \{\begin{pmatrix} a&b\\c&d \end{pmatrix}
\in \Mat(2,F\overline{Y}_{n_1}'')\mid (a+d)s=0=(b+c)s\right \}.$$
Obviously, $I:=\mathcal{F}\cap\Rad(\Mat(2,F\overline{Y}_{n_1}''))$ 
is a nilpotent ideal in $\mathcal{F}$ such that
\begin{align*} 
\mathcal{F}/I&=\mathcal{F}/\mathcal{F}\cap
\Rad(\Mat(2,F\overline{Y}_{n_1}''))\cong
\mathcal{F}+\Rad(\Mat(2,F\overline{Y}_{n_1}''))/
\Rad(\Mat(2,F\overline{Y}_{n_1}''))\\
&\subseteq\Mat(2,F\overline{Y}_{n_1}'')/
\Rad(\Mat(2,F\overline{Y}_{n_1}'')).
\end{align*}
Since $F\overline{Y}_{n_1}''$ is a local $F$-algebra,
there is an isomorphism 
$$ \Mat(2,F\overline{Y}_{n_1}'')/\Rad(\Mat(2,F\overline{Y}_{n_1}''))
\longrightarrow\Mat(2,F).$$
Moreover, since the annihilator of $s$ in $F\overline{Y}_{n_1}''$
coincides with $\Rad(F\overline{Y}_{n_1}'')$,
the above isomorphism maps the algebra $\mathcal{F}/I$ onto
$$\left\{\begin{pmatrix}\bar{a}& \bar{b}\\\bar{c}&\bar{d}\end{pmatrix}
\in\Mat(2,F)\mid \bar{a}=\bar{d}, \bar{b}=\bar{c}\right\}\cong FC_2.$$
In consequence,
$\mathcal{F}/I$ and thus also $\mathcal{F}=\End_{F\overline{Z}_3}(U_1)$ 
is a local $F$-algebra, and 
$\res_{\overline{Z}_3}^{\overline{Y}_n'}(U_1)$ is indecomposable. 


\medskip
Similarly, $\varphi\in\End_{F\overline{Z}_2}(U_1)$ if and only if
$\varphi\psi'=\psi'\varphi$, or equivalently, if
$(a+d)(s+1)=(a+d)(s+\overline{y}_{n_1}'^2)=0$
and $c(s+1)=b(s+\overline{y}_{n_1}'^2)$.
Since both $s+1,s+\overline{y}_{n_1}'^2\in F\overline{Y}_{n_1}''$ are units, 
we get
$$\mathcal{F}':=\End_{F\overline{Z}_2}(U_1)
=\left \{\begin{pmatrix} a&b\\c&d \end{pmatrix}
\in\Mat(2,F\overline{Y}_{n_1}'')\mid a=d,c=b\overline{y}_{n_1}'^2\right\}.$$
Let 
$0\neq e=a\cdot\begin{pmatrix} 1&0\\0&1 \end{pmatrix} 
        +b\cdot\begin{pmatrix} 0&1\\\overline{y}_{n_1}'^2&0 \end{pmatrix}
\in\mathcal{F}'$,
for some $a,b\in F\overline{Y}_{n_1}''$, be an idempotent. Then 
$$ 
 a\cdot\begin{pmatrix} 1&0\\0&1 \end{pmatrix}
+b\cdot\begin{pmatrix} 0&1\\\overline{y}_{n_1}'^2&0 \end{pmatrix} 
=e=e^2=a^2\cdot\begin{pmatrix} 1&0\\0&1 \end{pmatrix}
      +b^2\cdot\begin{pmatrix} \overline{y}_{n_1}'^2&0\\
                               0&\overline{y}_{n_1}'^2 \end{pmatrix} $$
shows that $b=0$ and $a^2=a\neq 0$ is an idempotent in $F\overline{Y}_{n_1}''$.
Since $F\overline{Y}_{n_1}''$ is a local $F$-algebra we conclude that
$a=1$, thus we have $e=1$, implying that 
$\mathcal{F}'=\End_{F\overline{Z}_2}(U_1)$ is a local $F$-algebra as well,
and $\res_{\overline{Z}_2}^{\overline{Y}_n'}(U_1)$ is indecomposable.

\medskip
Therefore, we have now shown that $U_1$ is an indecomposable 
$F\overline{Y}_n'$-module with vertex $\overline{Y}_n'$.
This in turn implies that $U_1$, as $FY_n'$-module, 
is also indecomposable with vertex $Y_n'$ (cf. \cite{Ku}). Replacing
$y_{n_1}'$ by $y_{n_2}'$, we deduce that also $U_2$ has vertex $Y_n'$. 
\end{proof}

\begin{prop}\label{prop:l3nl2}
Let $n_l=2$ and $l=3$.
Then $E$ has vertex $Q_n$ and source $\res_{Q_n}^{\mathfrak{A}_n}(E)$.
\end{prop}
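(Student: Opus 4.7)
The plan is to follow the general strategy of Remark~\ref{rem:strategy}: first show that $\tilde E := \res_{Q_n}^{\mathfrak A_n}(E)$ is indecomposable, and then derive a contradiction from the assumption that $E$ is relatively $R$-projective for some maximal subgroup $R < Q_n$.

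For the indecomposability, Lemma~\ref{lemma:l3nl2} is the key input: $\res_{Y_n'}^{\mathfrak A_n}(\tilde E) = U_1 \oplus U_2$ with $\dim U_1 = n_1 \neq n_2 = \dim U_2$, so $U_1 \not\cong U_2$. Consequently $\tilde E$ admits at most two indecomposable direct $FQ_n$-summands, and any non-trivial decomposition $\tilde E = V_1 \oplus V_2$ must, up to re-labelling, satisfy $V_j|_{Y_n'} = U_j$ and $\dim V_j = n_j$. Applying Remark~\ref{rem:formerly51} with $Q = Q_n$ and $j = 1, 2$ (note $n_1, n_2 \geq 4$, so $Q_n^{p_j q_j} = \langle w_{n_j,j}\rangle \neq 1$), one deduces that $V_j$ must coincide with the unique summand $W_j$ of $\tilde E$ not annihilated by $x_{n_j}^+$. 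To close the argument, I would exhibit an element $g \in Q_n \setminus Y_n'$, for instance $g = w_{n_1,1}$, that fails to stabilise $U_1$ (equivalently, $U_2$) as a subspace of $\tilde E$: since $V_1$ is a $Q_n$-summand with $Q_n$-equivariant projector $\pi_1 : \tilde E \to V_1$ whose restriction to $Y_n'$ is the canonical projection onto $U_1$, the relation $g \cdot U_1 \not\subseteq U_1$ contradicts $V_1 g = V_1$.

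Once indecomposability is established, the remainder follows the pattern of Proposition~\ref{prop:l4nl2}. Assume $\tilde E$ is relatively $R$-projective for some maximal $R < Q_n$; then $X_n = \prod_j X_{n_j} \leq \Phi(Q_n) \leq R$, and $Q_{n-4} \leq R$. Fix an indecomposable direct sum decomposition of $\res_R^{\mathfrak A_n}(E)$, whose summands have dimension $\dim(E)/2 = (n_1 + n_2)/2$ by Green's Indecomposability Theorem. By Remark~\ref{rem:formerly51}(a) either exactly one or exactly two of these summands are not annihilated by $x_{n_1}^+$. In the former case, $(FX_{n_1})^{\oplus 2}$ is a direct summand of the restriction to $X_{n_1}$ of such a summand $U$, forcing $\dim U \geq n_1 > (n_1 + n_2)/2$, a contradiction. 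In the latter case, Remark~\ref{rem:formerly51}(b) gives $R \leq \ker(p_1 q_1)$; but $n - 4 = n_1 + n_2 - 2 \geq n_1$ implies that $P_{n_1}$ is a direct factor of $P_{n-4}$, so $w_{n_1,1} \in Q_{n_1} \leq Q_{n-4} \leq R$, and $w_{n_1,1} \notin \ker(p_1 q_1)$, again a contradiction.

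The main obstacle is the indecomposability step. The clean $X_n$-argument of Remark~\ref{rem:formerly51}(c) breaks down when $n_l = 2$, since $x_{n_l}^+ = 0$ and the elements $\overline{\gamma}_{l'}^+, \overline{\gamma}_{l''}^+$ do not individually lie in $D$; the partial information available from $j = 1, 2$ alone does not suffice. One has to supplement it with Lemma~\ref{lemma:l3nl2} and exploit both the asymmetry $U_1 \not\cong U_2$ and the presence of $Q_n$-elements outside $Y_n'$ in order to rule out the single hypothetical splitting of $\tilde E$ that is compatible with the $Y_n'$-decomposition.
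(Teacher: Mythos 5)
The second half of your argument (the vertex step, assuming indecomposability of $\tilde{E}:=\res_{Q_n}^{\mathfrak{A}_n}(E)$) is correct: the chain $X_{n_1}\leq\Phi(Q_n)\leq R$, $w_{n_1,1}\in Q_{n_1}\leq Q_{n-4}\leq R$ together with Remark~\ref{rem:formerly51}(a),(b) is exactly the argument of Proposition~\ref{prop:l4nl2}, and it does go through for $l=3$; the paper instead uses Lemma~\ref{lemma:l3nl2} (the $U_i$ have vertex $Y_n'$, forcing $Y_n'\leq R$, and then $n_1\neq n_2$ contradicts Green), but both routes are valid. The genuine gap is in the indecomposability step. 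From a hypothetical $Q_n$-decomposition $\tilde{E}=V_1\oplus V_2$, Krull--Schmidt gives only \emph{isomorphisms} $\res_{Y_n'}^{Q_n}(V_1)\cong U_1$ and $\res_{Y_n'}^{Q_n}(V_2)\cong U_2$, not equality of subspaces, whereas your final contradiction needs equality: the $Q_n$-equivariant projector onto $V_1$ restricts to an idempotent in $\End_{FY_n'}(\tilde{E})$ whose image is $\res_{Y_n'}^{Q_n}(V_1)$, and there is no reason why this should be the particular subspace $U_1=M_1+M''/M''$ constructed in Lemma~\ref{lemma:l3nl2}. Since all composition factors of $U_1$ and $U_2$ are trivial, $\Hom_{FY_n'}(U_1,U_2)\neq 0$, so $\tilde{E}$ contains many $FY_n'$-submodules isomorphic to $U_1$ that complement $U_2$ (graphs of homomorphisms $U_1\rightarrow U_2$), and $V_1$ could a priori be any of them. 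Hence exhibiting some $g\in Q_n\setminus Y_n'$ with $U_1g\not\subseteq U_1$ says nothing about $V_1$; in fact, since $\tilde{E}$ turns out to be indecomposable, at least one of $U_1,U_2$ is certainly not $Q_n$-stable, so such a $g$ always exists and by itself yields no contradiction.

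What is needed, and what the paper does, is to pin down the hypothetical summand itself. Writing $\hat{V}_1\leq M'$ for the preimage of $V_1$ (the summand of dimension $n_1$, not annihilated by $y_{n_1}'^+$ resp.\ $x_{n_1}^+$), one shows via the projection $\pi_1:M\rightarrow M_{n_1}$ that $(\pi_1)_{|\hat{V}_1}$ is surjective with one-dimensional kernel $\langle\gamma_2^++\gamma_3^+\rangle_F$, then chooses $w\in\hat{V}_1$ with $w^{\pi_1}=\delta_1$ and uses $w(y_{n_2}'+1)\in\hat{V}_1\cap\ker(\pi_1)$ to force $w$ into an explicit form; applying the element $w_{n_2,2}\in Q_n\setminus Y_n'$ to this $w$ then gives $\gamma_2^+\in\hat{V}_1\cap\ker(\pi_1)$, a contradiction. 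So an element of $Q_n$ outside $Y_n'$ is indeed the right tool, but it must be applied to a concretely determined vector of the hypothetical summand $\hat{V}_1$, not to the model subspace $U_1$ from Lemma~\ref{lemma:l3nl2}; your sketch is missing exactly this pinning-down argument, and without it the indecomposability claim remains unproved.
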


\begin{proof}
We first show that $\res_{Q_n}^{\mathfrak{A}_n}(E)$ is indecomposable. 
For this, fix an indecomposable direct sum decomposition of 
$\res_{Q_n}^{\mathfrak{A}_n}(E)$. 
For $j\in\{1,2\}$, by Proposition \ref{prop:ker}, there is precisely one
indecomposable direct summand $V_j$ in this decomposition
which is not annihilated by $y_{n_j}'^+$.
We denote the preimage of $V_j$ under the natural epimorphism
$M'\longrightarrow M'/M''$ by $\hat{V}_j$.
From $\overline{\gamma}_j^+\in V_j$
we, by Proposition \ref{prop:bassoc}, conclude that
$\Soc(\res_{Y_n'}^{\mathfrak{A}_n}(E))\leq V_1+V_2$,
so that $\res_{Y_n'}^{\mathfrak{A}_n}(E)=V_1+V_2$.
Hence it suffices to show that $V_1=V_2$.

\medskip
Assume that $V_1\neq V_2$. 
We first conclude from Lemma \ref{lemma:l3nl2} that 
$\res_{Q_n}^{\mathfrak{A}_n}(E)=V_1\oplus V_2$, where $\dim(V_j)=n_j$. 
As in the proof of Lemma \ref{lemma:l4nl2indec}, for $j=1,2,3$, we use the
$FP_n$-epimorphism $\pi_j:M\longrightarrow M_{n_j}$.
We show that $(\pi_1)_{|\hat{V}_1}$ is surjective:
We have $\hat{V}_2^{\pi_1}\leq\Rad(\res_{Y_{n_1}'}^{P_n}(M_{n_1}))$,
since otherwise taking
$w\in\hat{V}_2$ such that
$w^{\pi_1}\in M_{n_1}\setminus\Rad(\res_{Y_{n_1}'}^{P_n}(M_{n_1}))$
yields $w(y_{n_1}'+1)^{n_1-1}=a\gamma_1^+\in\hat{V}_2$, for some
$0\neq a\in F$,
a contradiction. From $\hat{V}_1+\hat{V}_{2}=M'$ we get
$(\hat{V}_1+\hat{V}_{2})^{\pi_1}=M_{n_1}$,
thus $\hat{V}_1^{\pi_1}=M_{n_1}$.
Hence from $\dim(\hat{V}_1)=n_1+1=\dim(M_{n_1})+1$ we
get $\dim(\hat{V}_1\cap\ker(\pi_1))=1$.
We set $v_0:=\gamma_2^++\gamma_3^+$. Then clearly $v_0\in\ker(\pi_1)$ 
and, since $v_0=\gamma^++\gamma_1^+$, we also
have $v_0\in\hat{V}_1$, thus 
$\hat{V}_1\cap\ker(\pi_1)=\langle v_0\rangle_F$.

\medskip
Now there is some $w\in \hat{V}_1$ such that $w^{\pi_1}=\gamma_1=\delta_1$. 
We write $w=\delta_1+\sum_{i=n_1+1}^na_i\delta_i$ 
with appropriate $a_{n_1+1},\ldots,a_n\in F$. Then we have
\begin{align*}
\tilde{w}:=w(y_{n_2}'+1)=&\sum_{i=n_1+2}^{n_1+n_2}(a_i+a_{i-1})\delta_i
                         +(a_{n_1+n_2}+a_{n_1+1})\delta_{n_1+1}\\
&+(a_{n-1}+a_n)(\delta_{n-1}+\delta_n)
\in\hat{V}_1\cap\ker(\pi_1).
\end{align*}
Since $w\in M'$, we have $\tilde{w}\neq 0$,
hence we get $\tilde{w}=av_0$, for some $0\neq a\in F$,
where we may suppose that $a=1$.
Thus we have $a_{i+1}=a_i+1$ for all $i\in\{n_1+1,\ldots,n-3\}$ 
and $a_n=a_{n-1}+1$.
Hence adding a suitable multiple of $v_0$ we may assume that
$ w=\delta_1+(b+1)\delta_{n-1}+b\delta_n
    +\sum_{i=0}^{n_2/2-1}\delta_{n_1+1+2i} $,
    for some $b\in F$.
Since $\hat{V}_1$ is an $FQ_n$-module, we also have 
$$ w w_{n_2,2}
= \delta_1+(b+1)\delta_{n-1}+b\delta_n
   +\sum_{i=1}^{n_2/2}\delta_{n_1+2i}
=w+\gamma_2^+ \in\hat{V}_1 ,$$
implying $\gamma_2^+\in\hat{V}_1\cap\ker(\pi_1)$, a contradiction. 
Hence we have $V_1=V_2$.

\medskip
It remains to show that $Q_n$ is a vertex of $E$. 
We follow the strategy given in Remark \ref{rem:strategy},
and assume that $E$ is relatively $R$-projective, for some maximal subgroup
$R<Q_n$.
By Lemma \ref{lemma:l3nl2} we have $\res_{Y_n'}^{Q_n}(V_1)=U_1\oplus U_2$ 
where the $U_i$ are indecomposable with vertex $Y_n'$. 
Since $\res_{Q_n}^{\mathfrak{A}_n}(E)=V_1$ is indecomposable, 
we infer that $Y_n'\leq R$. 
Thus, again by Lemma \ref{lemma:l3nl2}, 
$\res_R^{Q_n}(V_1)$ is either indecomposable or the direct sum
of an indecomposable module of dimension $n_1$ 
and an indecomposable module of dimension $n_2\neq n_1$,
a contradiction.
\end{proof}


\section{The case $n_l=2$ and $l=2$}\label{sec:nl2l2}
In order to complete the proof of Theorem \ref{thm:main},
we are now left with the case where
$l=2=n_l$ which is treated in the following.

\begin{prop}\label{prop:l2nl2}
Let $n_l=2$ and $l=2$. If $n>6$ then $E$ has vertex $Q_n$ and
source $\res_{Q_n}^{\mathfrak{A}_n}(E)$.
\end{prop}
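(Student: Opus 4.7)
The plan is to follow the general strategy of Remark~\ref{rem:strategy}: first establish that $\tilde{E}:=\res_{Q_n}^{\mathfrak{A}_n}(E)$ is indecomposable, then pin down a unique candidate maximal subgroup $R$, and finally show that $\tilde{E}$ cannot be induced from $R$. For indecomposability, Proposition~\ref{prop:bassoc}(iii) gives $\res_{Y_n'}^{\mathfrak{A}_n}(E)\cong FY_n'$, and since $Y_n'\cong C_{n_1}$ is cyclic of $2$-power order, this regular module is indecomposable. Next, assuming $\tilde{E}$ is relatively $R$-projective for some maximal $R<Q_n$, Green's Indecomposability Theorem gives $\tilde{E}\cong\ind_R^{Q_n}(V)$ with $V$ indecomposable of dimension $n_1/2$, and $\res_R(\tilde{E})=V\oplus V^g$ for $g\in Q_n\setminus R$. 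Applying Remark~\ref{rem:formerly51}(a),(b) with $j=1$ (the unique $j$ with $n_j>2$), the case that only one of $V,V^g$ is non-annihilated by $x_{n_1}^+$ would force that summand to have dimension $\geq n_1>n_1/2$, which is impossible; hence both are non-annihilated, and Remark~\ref{rem:formerly51}(b) yields $R\leq\ker(p_1q_1)\cap Q_n=(B_{n_1}\times P_2)\cap Q_n$. As both subgroups have index $2$ in $Q_n$, this identifies $R=(B_{n_1}\times P_2)\cap Q_n$.

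The main obstacle is deriving the contradiction for this specific $R$, and my plan here is to identify $\res_R(\tilde{E})$ explicitly as a permutation module of $R$ and then exploit indecomposability of such modules over $2$-groups. Remark~\ref{rem:nat}(b), applied to $\res_{\mathfrak{A}_{n-1}}^{\mathfrak{A}_n}(E)\cong E^{(n-2,1)}_0$ (valid since $n>6$ is even, so that $n-1\geq 9$ is odd), tells us that $F$ is a direct summand of $\res_{Q_{n-4}}(\tilde{E})$ with $Q_{n-4}=Q_{n_1-2}$. The standard Sylow embedding places $Q_{n_1-2}\leq B_{n_1}'\leq R$, and an order count gives $[R:Q_{n_1-2}]=2^{i_1}=n_1=\dim\tilde{E}$. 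Frobenius reciprocity lifts the $Q_{n_1-2}$-projection $\tilde{E}\twoheadrightarrow F$ to an $R$-map $\tilde{E}\to\ind_{Q_{n_1-2}}^R(F)$; transitivity of $R$ on $R/Q_{n_1-2}$ makes this map surjective, and matching dimensions force it to be an isomorphism of $FR$-modules.

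To conclude, I will invoke the general principle that for any $2$-group $R$ and any subgroup $H\leq R$ the permutation module $\ind_H^R(F)$ is indecomposable: every nonzero $FR$-module has nonzero $R$-fixed points, while $(\ind_H^R(F))^R=\Hom_{H}(F,F)=F$ is $1$-dimensional, so any nontrivial direct sum decomposition of $\ind_H^R(F)$ would produce $R$-fixed points of dimension at least $2$. Hence $\tilde{E}\cong\ind_{Q_{n_1-2}}^R(F)$ is indecomposable as an $FR$-module, contradicting the Green decomposition $\res_R(\tilde{E})=V\oplus V^g$ with $V,V^g$ both non-zero. Therefore $\tilde{E}$ is not relatively $R$-projective for any maximal $R<Q_n$, so $Q_n$ is a vertex of $E$ and $\res_{Q_n}^{\mathfrak{A}_n}(E)$ is a source.
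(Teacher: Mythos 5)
Your first two steps are sound and in fact slightly streamline the paper's argument: establishing indecomposability of $\tilde{E}=\res_{Q_n}^{\mathfrak{A}_n}(E)$ directly from Proposition~\ref{prop:bassoc}(iii) (since $Y_n'$ is cyclic of order $n_1$ here and $FY_n'$ is uniserial) is a neat shortcut, and your use of Remark~\ref{rem:formerly51}(a),(b) to pin down $R=(B_{n_1}\times P_2)\cap Q_n$ mirrors the paper's elimination of $R_1'$ and $R_2'$.

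The gap is in the final step, and it is a genuine one. You claim that the Frobenius reciprocity map $\varphi\colon\res_R\tilde{E}\to\ind_{Q_{n_1-2}}^R(F)$ induced by a projection $\psi\colon\res_{Q_{n_1-2}}\tilde{E}\twoheadrightarrow F$ is surjective "by transitivity of $R$ on $R/Q_{n_1-2}$", and then conclude $\res_R\tilde{E}\cong\ind_{Q_{n_1-2}}^R(F)$ by dimension count. But transitivity of $R$ on the coset space is just a property of the target module and says nothing about the image of $\varphi$; surjectivity of $\varphi$ is equivalent to the nonvanishing of the transfer $\mathrm{tr}_{Q_{n_1-2}}^R(\psi)\in\Hom_R(\res_R\tilde{E},F)$, which is by no means automatic. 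Worse, the asserted isomorphism is in fact false: a transitive permutation module $\ind_{Q_{n_1-2}}^R(F)$ for the $2$-group $R$ has a one-dimensional space of $R$-fixed points, whereas $\dim_F\bigl((\res_R\tilde{E})^R\bigr)\geq 2$. Indeed, $R=(B_{n_1}\times P_2)\cap Q_n$ has exactly three orbits on $\{1,\ldots,n\}$, namely $\Omega_1'$, $\Omega_1''$ and $\{n-1,n\}$ (recall $n_1\geq 8$, so $Q_{n_1/2}\times Q_{n_1/2}\leq R$ acts transitively on $\Omega_1'$ and $\Omega_1''$, and $(1,2)(n-1,n)\in R$). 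Hence $\dim(M^R)=3$; each of the corresponding fixed vectors $\gamma_{1'}^+$, $\gamma_{1''}^+$, $\gamma_{n-1}+\gamma_n$ has even augmentation, so they all lie in $M'$, giving $\dim((M')^R)=3$; since $(M')^R\cap M''=M''$ is one-dimensional, the induced map $(M')^R/M''\hookrightarrow E^R$ forces $\dim(E^R)\geq 2$. So $\res_R\tilde{E}$ cannot be a transitive permutation module, and the contradiction you seek does not materialize.

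For this final candidate the paper does something quite different: it enlarges $R_3'$ to the non-$2$-local subgroup $H'=(\mathfrak{S}_{\frac{n_1}{2}}\times\mathfrak{S}_{\frac{n_1}{2}}\times\mathfrak{S}_2)\cap\mathfrak{A}_n$, computes the endomorphism algebra $\End_{FH'}(E)$ explicitly (it is $5$-dimensional and local, hence $\res_{H'}^{\mathfrak{A}_n}(E)$ is indecomposable), and then feeds this through a vertex-transfer argument via $\hat{H}'=((\mathfrak{S}_{\frac{n_1}{2}}\wr\mathfrak{S}_2)\times\mathfrak{S}_2)\cap\mathfrak{A}_n\supseteq Q_n$ to rule out relative $R_3'$-projectivity. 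Some such detour through an overgroup of $R_3'$ appears to be needed precisely because $\res_{R_3'}(E)$ is not indecomposable in an easily detectable way.
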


\begin{proof}
We follow the strategy given in Remark \ref{rem:strategy}, and assume that 
$E$ is relatively $R$-projective for some maximal subgroup $R<Q_n$.
Hence we have $\langle \Phi(Q_n),Q_{n-4}\rangle\leq R$ and, in particular,
$X_{n_1}=X_n\leq\Phi(Q_n)\leq R$.
Using $\Phi(Q_n)=\Phi(P_n)=\Phi(P_{n_1})$ and 
$Q_{\frac{n_1}{2}}\leq Q_{n-4}$, 
Proposition \ref{prop:basis} yields
$B_{n_1}'=Q_{\frac{n_1}{2}}\Phi(P_{n_1})\leq Q_{n-4}\Phi(Q_n)\leq R$.
Since $Q_n/B_{n_1}'$ is elementary abelian of order 4, 
there are precisely three 
maximal subgroups of $Q_n$ containing $B_{n_1}'=B_n'$:
$$R_1':=B_n'\langle y_{n_1}'\rangle,\quad 
R_2':=B_n'\langle w_{n_1,1}\rangle=Q_{n_1},\quad 
R_3':=B_n'\langle w_{2,1}w_{2,2}\rangle=(B_n\times P_2)\cap Q_n.$$

\medskip
We next show that $E$ restricts indecomposably to $R_1'$ and $R_2'$:
By Remark \ref{rem:formerly51}, part (a), 
and using $\dim(E)=n_1=2\dim(FX_{n_1})$ we get 
$\res_{X_{n_1}}^{\mathfrak{A}_n}(E)\cong FX_{n_1}\oplus FX_{n_1}$.
Now let $i\in\{1,2\}$, fix an indecomposable direct sum decomposition of
$\res_{R_i'}^{\mathfrak{A}_n}(E)$, 
and assume that $\res_{R_i'}^{\mathfrak{A}_n}(E)$ is decomposable.
Hence $\res_{R_i'}^{\mathfrak{A}_n}(E)$ actually
has two indecomposable summands, 
both of which are not annihilated by $x_{n_1}^+$.
But, since $R_i'^{p_1 q_1}=\langle w_{n_1,1}\rangle$,
Remark \ref{rem:formerly51}, part (b),
implies that there is precisely one such summand, a contradiction.
Hence from this we also conclude that $\res_{Q_n}^{\mathfrak{A}_n}(E)$ 
is indecomposable. 

\medskip
It remains to show that $E$ is not relatively $R_3'$-projective. 
We claim that it suffices to show that $E$ restricts indecomposably to 
$H':=(\mathfrak{S}_{\frac{n_1}{2}}\times\mathfrak{S}_{\frac{n_1}{2}}
\times\mathfrak{S}_2)\cap\mathfrak{A}_n$.
Namely, then $E$ also restricts indecomposably to 
$\hat{H}':=((\mathfrak{S}_{\frac{n_1}{2}}\wr\mathfrak{S}_{2})
\times\mathfrak{S}_2)\cap\mathfrak{A}_n$. 
Of course $E$ is relatively $\hat{H}'$-projective, hence $E$ and 
$\res_{\hat{H}'}^{\mathfrak{A}_n}(E)$ have a common vertex. 
If $E$ were relatively $R_3'$-projective then 
$\res_{\hat{H}'}^{\mathfrak{A}_n}(E)$ 
would be relatively $H'$-projective, a contradiction since $|\hat{H}':H'|=2$.

\medskip
Therefore, we now show that $\End_{FH'}(E)$ is a local $F$-algebra so that 
$\res_{H'}^{\mathfrak{A}_n}(E)$ is indecomposable. 
Using the $F$-basis 
$\{\overline{\gamma}_1+\overline{\gamma}_n,\ldots,
\overline{\gamma}_{n_1}+\overline{\gamma}_n\}$ of $D$, 
we get
$$ \res_{\mathfrak{S}_{\frac{n_1}{2}}\times
\mathfrak{S}_{\frac{n_1}{2}}}^{\mathfrak{S}_n}(D)=M_1\oplus M_2\cong 
(M^{(\frac{n_1}{2}-1,1)}\boxtimes F)\oplus 
(F\boxtimes M^{(\frac{n_1}{2}-1,1)}) .$$
Here $M^{(\frac{n_1}{2}-1,1)}\boxtimes F$ denotes the
outer tensor product of the $F\mathfrak{S}_{\frac{n_1}{2}}$-modules 
$M^{(\frac{n_1}{2}-1,1)}$ and $F$.
Furthermore, $M_1$ has $F$-basis 
$\{\overline{\gamma}_1+\overline{\gamma}_n,\ldots,
\overline{\gamma}_{\frac{n_1}{2}}+\overline{\gamma}_n\}$,
and $M_2$ has $F$-basis 
$\{\overline{\gamma}_{\frac{n_1}{2}+1}+\overline{\gamma}_n,\ldots,
\overline{\gamma}_{n_1}+\overline{\gamma}_n\}$. 
Both modules are indecomposable and uniserial with 
descending composition factors
$$ (F,D^{(\frac{n_1}{2}-1,1)}\boxtimes F,F)
\quad\text{ and }\quad
(F,F\boxtimes D^{(\frac{n_1}{2}-1,1)},F),$$ 
respectively.
Note that this also holds for $n=6$, that is $n_1=4$, 
if we just let $D^{(1^2)}:=\{0\}$.
Now, since $n>6$, we have $n_1\geq 8$. Consequently
$\res_{\mathfrak{A}_{\frac{n_1}{2}}
 \times\mathfrak{A}_{\frac{n_1}{2}}}^{\mathfrak{S}_n}(D)
=\tilde{M}_1\oplus \tilde{M}_2$,
where $\tilde{M}_1$ and $\tilde{M}_2$, for $n_1>8$, are uniserial with 
descending composition factors
$$ (F,E_0^{(\frac{n_1}{2}-1,1)}\boxtimes F,F)
\quad\text{ and }\quad
(F,F\boxtimes E_0^{(\frac{n_1}{2}-1,1)},F) ,$$
respectively. 
For $n_1=8$ we have 
$\Soc(\tilde{M}_1)\cong\tilde{M}_1/\Rad(\tilde{M}_1)\cong F$
and
$\Rad(\tilde{M}_1)/\Soc(\tilde{M}_1)\cong
(E_+^{(3,1)}\oplus E_-^{(3,1)})\boxtimes F$
as well as
$\Soc(\tilde{M}_2)\cong\tilde{M}_2/\Rad(\tilde{M}_2)\cong F$
and 
$\Rad(\tilde{M}_2)/\Soc(\tilde{M}_2)\cong
F\boxtimes(E_+^{(3,1)}\oplus E_-^{(3,1)})$.
Thus
$$ \mathcal{E}:=
\End_{F[\mathfrak{S}_{\frac{n_1}{2}}\times\mathfrak{S}_{\frac{n_1}{2}}]}(D)=
\End_{F[\mathfrak{A}_{\frac{n_1}{2}}\times\mathfrak{A}_{\frac{n_1}{2}}]}(D)$$
has dimension 6, and $F$-basis $\{\varphi_1,\ldots,\varphi_6\}$ where 
$\varphi_1$ and $\varphi_2$ are the projections
onto $\tilde{M}_1$ and $\tilde{M}_2$, respectively, 
$\varphi_3$ annihilates $\tilde{M}_2$ and maps
$\tilde{M}_1$ onto $\Soc(\tilde{M}_1)$, 
$\varphi_4$ annihilates $\tilde{M}_1$ and maps
$\tilde{M}_2$ onto $\Soc(\tilde{M}_2)$, 
$\varphi_5$ annihilates $\tilde{M}_2$ and maps
$\tilde{M}_1$ onto $\Soc(\tilde{M}_2)$, 
$\varphi_6$ annihilates $\tilde{M}_1$ and maps
$\tilde{M}_2$ onto $\Soc(\tilde{M}_1)$. 
That is, for $a_1,\ldots,a_{n_1}\in F$ and
$v:=\sum_{i=1}^{n_1}a_i(\overline{\gamma}_i+\overline{\gamma}_n)\in D$, we
may suppose that
\begin{align*}
v^{\varphi_1}&= a_1(\overline{\gamma}_1+\overline{\gamma}_n)+\cdots 
+a_{\frac{n_1}{2}}(\overline{\gamma}_{\frac{n_1}{2}}+\overline{\gamma}_n),\\
v^{\varphi_2}&=a_{\frac{n_1}{2}+1}(\overline{\gamma}_{\frac{n_1}{2}+1}+
\overline{\gamma}_n)+\cdots +a_{n_1}(\overline{\gamma}_{n_1}+
\overline{\gamma}_n),\\
v^{\varphi_3}&=(a_1+\cdots+a_{\frac{n_1}{2}})((\overline{\gamma}_1+
\overline{\gamma}_n)+\cdots+(\overline{\gamma}_{\frac{n_1}{2}}+
\overline{\gamma}_n)),\\
v^{\varphi_4}&=(a_{\frac{n_1}{2}+1}+\cdots +a_{n_1})
((\overline{\gamma}_{\frac{n_1}{2}+1}+\overline{\gamma}_n)+\cdots+
(\overline{\gamma}_{n_1}+\overline{\gamma}_n)),\\
v^{\varphi_5}&=(a_1+\cdots+a_{\frac{n_1}{2}})
((\overline{\gamma}_{\frac{n_1}{2}+1}+\overline{\gamma}_n)+\cdots+
(\overline{\gamma}_{n_1}+\overline{\gamma}_n)),\\
v^{\varphi_6}&=(a_{\frac{n_1}{2}+1}+\cdots +a_{n_1})
((\overline{\gamma}_1+\overline{\gamma}_n)+\cdots+
(\overline{\gamma}_{\frac{n_1}{2}}+\overline{\gamma}_n)).
\end{align*}
The multiplication in $\mathcal{E}$ is given by 
\begin{align*}
\varphi_1^2=\varphi_1, & &
\varphi_1\varphi_3=\varphi_3=\varphi_3\varphi_1, & &
\varphi_6\varphi_1=\varphi_6=\varphi_2\varphi_6, \\
\varphi_2^2=\varphi_2, & &
\varphi_2\varphi_4=\varphi_4=\varphi_4\varphi_2, & &
\varphi_5\varphi_2=\varphi_5=\varphi_1\varphi_5, 
\end{align*}
and any other product of two basis elements vanishes. Note that
$H:=\mathfrak{S}_{\frac{n_1}{2}}\times\mathfrak{S}_{\frac{n_1}{2}}
\times\mathfrak{S}_2 
=(\mathfrak{S}_{\frac{n_1}{2}}\times 
\mathfrak{S}_{\frac{n_1}{2}})\langle (n-1,n)\rangle$ 
and 
$H'=(\mathfrak{A}_{\frac{n_1}{2}}\times 
\mathfrak{A}_{\frac{n_1}{2}})
\langle (1,2)(\frac{n_1}{2}+1,\frac{n_1}{2}+2),(1,2)(n-1,n)\rangle$.
The algebra $\mathcal{E}$ carries an $H$-algebra structure with 
respect to the conjugation action. We thus deduce
that $\tilde{\mathcal{E}}:=\End_{FH'}(E)=\End_{FH}(D)$, and 
$\tilde{\mathcal{E}}$ consists of those $\varphi\in\mathcal{E}$ which are
fixed under $(n-1,n)$. Let now $v\in D$ be as above, and let 
$\varphi:=b_1\varphi_1+\cdots+b_6\varphi_6\in\mathcal{E}$
for some $b_1,\ldots,b_6\in F$. Then 
$$v(n-1,n)=\sum_{i=1}^{n_1}(\sum_{j\neq i}a_j)
(\overline{\gamma}_i+\overline{\gamma}_n)=v^{\varphi_1+\cdots+\varphi_6}.$$
Hence $\varphi\in\tilde{\mathcal{E}}$ if and only if 
$\varphi(\varphi_1+\cdots+\varphi_6)=(\varphi_1+\cdots+\varphi_6)\varphi$, 
or equivalently, if $b_1=b_2$.
We have thus shown that $\tilde{\mathcal{E}}$ has dimension 5 and $F$-basis 
$\{\varphi_1+\varphi_2,\varphi_3,\varphi_4,\varphi_5,\varphi_6\}$. 
Since $\tilde{\mathcal{E}}$ is abelian and
$\varphi_3,\varphi_4,\varphi_5,\varphi_6$ are
nilpotent, we also deduce that 
$\Rad(\tilde{\mathcal{E}})$ has dimension 4. 
In particular, $\tilde{\mathcal{E}}$ is local, and the assertion follows.
\end{proof}

\begin{rem}\label{rem:n6}\normalfont
It remains to consider the case $n=6$. Let $E:=E^{(5,1)}_0$. 
In view of the above observations, we aim to show that $E$ has vertex 
$Q:=(\mathfrak{S}_2\times\mathfrak{S}_2\times\mathfrak{S}_2)\cap\mathfrak{A}_6
=\langle (1,2)(3,4),(3,4)(5,6)\rangle\cong V_4$
and sources of dimension 2:
To this end, let 
$Q<Q_6:=\langle (1,3)(2,4),(1,2)(3,4),(3,4)(5,6)\rangle<\mathfrak{A}_6$,
where $Q_6$ is a dihedral group of order $8$.
With respect to the basis
$\{\bar{\gamma}_1+\bar{\gamma}_6,\bar{\gamma}_2+\bar{\gamma}_6,
\bar{\gamma}_3+\bar{\gamma}_6,\bar{\gamma}_4+\bar{\gamma}_6\}$,
the action of $Q_6$ on $E$ is given via the following matrices:
$$
(1,3)(2,4)\longleftrightarrow 
\begin{pmatrix} .&.&1&.\\ .&.&.&1\\1&.&.&.\\.&1&.&. \end{pmatrix},$$
$$ (1,2)(3,4)\longleftrightarrow 
\begin{pmatrix} .&1&.&.\\1&.&.&.\\ .&.&.&1\\.&.&1&. \end{pmatrix},\quad
(3,4)(5,6)\longleftrightarrow
\begin{pmatrix} .&1&1&1\\ 1&.&1&1\\ 1&1&1&.\\ 1&1&.&1 \end{pmatrix}.$$
Let $\omega\in F$ be a primitive third root of unity, 
and consider the following $F$-subspaces of $E$:
\begin{align*}
U&:=\langle (\bar{\gamma}_1+\bar{\gamma}_6)
           +\omega(\bar{\gamma}_4+\bar{\gamma}_6),
            (\bar{\gamma}_2+\bar{\gamma}_6)
           +\omega(\bar{\gamma}_3+\bar{\gamma}_6)\rangle_F,\\
V&:=\langle (\bar{\gamma}_3+\bar{\gamma}_6)
           +\omega(\bar{\gamma}_2+\bar{\gamma}_6),
            (\bar{\gamma}_4+\bar{\gamma}_6)
           +\omega(\bar{\gamma}_1+\bar{\gamma}_6)\rangle_F.
\end{align*}
Then we have $E=U\oplus V$ as $F$-vector spaces,
and the action of $Q_6$ on $E$, with respect to
this new $F$-basis, is given via:
$$ (1,3)(2,4)\longleftrightarrow
\begin{pmatrix} .&.&1&.\\ .&.&.&1\\1&.&.&.\\.&1&.&. \end{pmatrix},$$
$$ (1,2)(3,4)\longleftrightarrow 
\begin{pmatrix} .&1&.&.\\1&.&.&.\\ .&.&.&1\\.&.&1&. \end{pmatrix},\quad
(3,4)(5,6)\longleftrightarrow
\begin{pmatrix} \omega&\omega^2&.&.\\ \omega^2&\omega&.&.\\ 
                .&.&\omega^2&\omega\\ .&.&\omega&\omega^2 \end{pmatrix}.$$
Thus both $U$ and $V$ are $FQ$-submodules of $E$, and
we have $\res_Q^{\mathfrak{A}_6}(E)=U\oplus V$,
where the socle of $U$ has dimension 1 
so that $U$ is indecomposable.
Moreover, from $((1,2)(3,4))^{(1,3)(2,4)}=(1,2)(3,4)$ and 
$$((3,4)(5,6))^{(1,3)(2,4)}=(1,2)(5,6)=(1,2)(3,4)\cdot (3,4)(5,6)$$
we infer that $\ind_Q^{Q_6}(U)\cong\res_{Q_6}^{\mathfrak{A}_6}(E)$.
Thus, by Green's Indecomposability Theorem,
$\res_{Q_6}^{\mathfrak{A}_6}(E)$ is indecomposable, and 
$E$ is relatively $Q$-projective. 
Moreover, each proper subgroup of $Q$ is cyclic, and
since $E$ belongs to a block of $F\mathfrak{A}_6$ with non-cyclic 
defect groups, $E$ cannot have a cyclic vertex, by \cite{E}. 
So $Q$ and $U$ then have to be vertex and source, respectively, of $E$.
\hfill\openbox
\end{rem}

\section{Appendix}\label{sec:appendix}
\normalfont

We give a new corrected proof of \cite[Thm. 1.4(a), 1.5(a)]{MuZ}.
For the case $l=2$ and $n_l=2$ we reuse the observations made 
in the proof of Proposition \ref{prop:l2nl2}, where we actually
have already chosen notation appropriately.

\begin{thm}\label{thm:correction}
Let $n$ be even, but not a $2$-power. Then 
$D$ has vertex $P_n$ and source $\res_{P_n}^{\mathfrak{S}_n}(D)$.
\end{thm}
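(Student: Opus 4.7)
The plan is to transfer the strategy of Remark \ref{rem:strategy}, together with the case analysis of Sections \ref{sec:nlgt2}--\ref{sec:nl2l2}, to the symmetric-group setting. First, I would observe that $\res_{P_n}^{\mathfrak{S}_n}(D)$ is automatically indecomposable: its restriction to $Q_n$ equals $\res_{Q_n}^{\mathfrak{A}_n}(E)$, which is indecomposable by \cite[Prop.~4.2]{MuZ}, so any non-trivial direct sum decomposition of the $P_n$-restriction would already be visible on $Q_n$. In particular, any vertex $Q$ of $D$ can be chosen inside $P_n$, and the theorem reduces to showing that $D$ is not relatively $R$-projective for any maximal subgroup $R<P_n$.

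Next I would run the symmetric-group analog of Remark \ref{rem:strategy}(a). Since $n-1$ is odd, Remark \ref{rem:nat}(b) says that the simple composition factor $D^{(n-2,1)}$ of $\res_{\mathfrak{S}_{n-1}}^{\mathfrak{S}_n}(D)$ has vertex $P_{n-4}$ and trivial source. Combining this with Mackey's decomposition theorem as in Remark \ref{rem:strategy}(a) forces $\langle\Phi(P_n),P_{n-4},Q\rangle\leq R$ up to $P_n$-conjugacy. By Proposition \ref{prop:basis} (and the straightforward analog for general $l$), this pins $R$ down to a short, explicit list of maximal overgroups of $B_n\Phi(P_n)$, and the proof naturally splits along the $2$-adic expansion $n=\sum_{j=1}^l n_j$ into the same cases treated for $E$.

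The key observation is that, in the generic cases $n_l>2$, $n_l=2$ with $l\geq 4$, and $n_l=2$ with $l=3$, the arguments of Sections \ref{sec:nlgt2}, \ref{sec:nl2lgeq4} and \ref{sec:nl2l3} transfer essentially verbatim. The reason is that Remark \ref{rem:formerly51} and the underlying basis analysis of $M$, $M'$ and $D$ are already carried out over $\mathfrak{S}_n$; the alternating-group conclusions were deduced from them, never the other way around. In particular, parts (a)--(c) of Remark \ref{rem:formerly51} apply to any subgroup $Q\leq P_n$ with $X_n\leq Q$, so replaying each of those case arguments with $Q_n,\tilde{E}$ replaced by $P_n,\res_{P_n}^{\mathfrak{S}_n}(D)$, and $y_{n_j}',w_{n_j,j}$ used as in the original set-up, produces the required contradictions.

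The main obstacle is the final case $l=n_l=2$, where $B_n$ already has index $2$ in $P_n=P_{n_1}\times P_2$, so only a handful of candidate maximal subgroups $R\supseteq P_{n-4}\Phi(P_n)$ remain; the analogs of $R_1'$ and $R_2'$ of Proposition \ref{prop:l2nl2} are excluded via $\res_{X_{n_1}}^{\mathfrak{S}_n}(D)\cong FX_{n_1}\oplus FX_{n_1}$ together with Remark \ref{rem:formerly51}(b), but the analog of $R_3'$ requires a different argument. Here I would reuse verbatim the endomorphism-algebra computation carried out in Proposition \ref{prop:l2nl2}: there it was shown that $\End_{FH}(D)$ for $H=\mathfrak{S}_{n_1/2}\times\mathfrak{S}_{n_1/2}\times\mathfrak{S}_2$ is a local $F$-algebra of dimension $5$ with $4$-dimensional radical, and this calculation was set up at the $\mathfrak{S}_n$-level from the outset and remains valid for $n=6$ under the convention $D^{(1^2)}:=0$. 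Consequently $\res_H^{\mathfrak{S}_n}(D)$ is indecomposable, hence so is $\res_{\hat H}^{\mathfrak{S}_n}(D)$ for $\hat H=(\mathfrak{S}_{n_1/2}\wr\mathfrak{S}_2)\times\mathfrak{S}_2$, and the index computation $|\hat H:H|=2$ rules out the remaining maximal $R$. This is precisely the step flagged in Remark \ref{rem:nat}(c) as closing the gap in \cite[Prop.~4.5]{MuZ}.
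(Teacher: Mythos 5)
Your proposal is correct in substance, and on the decisive case $l=n_l=2$ it coincides with the paper's own proof: the paper likewise reduces to the three maximal subgroups $R_1,R_2,R_3$ of $P_n$ above $B_n$, excludes $R_1,R_2$ via $\res_{X_{n_1}}^{\mathfrak{S}_n}(D)\cong FX_{n_1}\oplus FX_{n_1}$ and Remark \ref{rem:formerly51}(b), and excludes $R_3$ by the $\hat{H}$ versus $H$ index-$2$ trick together with the locality of $\End_{FH}(D)$ computed in the proof of Proposition \ref{prop:l2nl2}; your observation that this computation is set up at the $\mathfrak{S}_n$-level and survives $n=6$ with the convention $D^{(1^2)}:=0$ is exactly the point the paper uses. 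Where you diverge: (i) the paper does not replay Sections \ref{sec:nlgt2}--\ref{sec:nl2l3} at all, but collapses every case with $n_l>2$ or $l\geq 3$ into one short argument, because in the symmetric group one always has $P_{n_1}\leq P_{n-4}\leq R$, hence $R^{p_1q_1}=\langle w_{n_1,1}\rangle\neq 1$; Remark \ref{rem:formerly51}(b) then leaves exactly one summand $U$ not annihilated by $x_{n_1}^+$, and $(FX_{n_1}\oplus FX_{n_1})\mid\res_{X_{n_1}}^{R}(U)$ gives $\dim(U)\geq n_1>\dim(D)/2$, contradicting the splitting into two halves forced by Remark \ref{rem:strategy}(a); so the heavy machinery of Lemmas \ref{lemma:l4nl2indec} and \ref{lemma:l3nl2} is superfluous for $D$, and your verbatim transfer, while workable, buys nothing. (ii) For the indecomposability of $\res_{P_n}^{\mathfrak{S}_n}(D)$ the paper simply quotes \cite[Prop. 3.7]{MuZ}; your alternative route via $Q_n$ is legitimate in principle (the appendix may use Sections \ref{sec:nlgt2}--\ref{sec:nl2l2} and Remark \ref{rem:n6}, which do establish that $\res_{Q_n}^{\mathfrak{A}_n}(E)$ is indecomposable for all even non-$2$-power $n$), but your citation of \cite[Prop. 4.2]{MuZ} for this is not supported: in the paper that result is invoked only when $n$ is a $2$-power, and for $l\geq 2$ the indecomposability of $\res_{Q_n}^{\mathfrak{A}_n}(E)$ is precisely what Sections \ref{sec:nlgt2}--\ref{sec:nl2l2} labour to prove, so you should re-source this step. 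Finally, your remark that Proposition \ref{prop:basis} pins $R$ down to a short explicit list for general $l$ is inaccurate (that proposition concerns $2$-power degrees and enters only in the $l=n_l=2$ case via $\Phi(P_n)=\Phi(P_{n_1})$), but none of your case arguments actually depends on it.
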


\begin{proof}
By \cite[Prop. 3.7]{MuZ}, $\res_{P_n}^{\mathfrak{S}_n}(D)$ is indecomposable.
We again follow the strategy given in Remark \ref{rem:strategy}. Assume that 
$R<P_n$ is a maximal subgroup such that $D$ is relatively $R$-projective,
and fix an indecomposable direct sum decomposition of 
$\res_{R}^{\mathfrak{S}_n}(D)$. We have $X_n\leq\Phi(P_n)\leq R$.
Moreover, we have 
$\res_{\mathfrak{S}_{n-1}}^{\mathfrak{S}_n}(D)\cong D^{(n-2,1)}$.
The latter, by \cite[Thm. 1.2, 1.3]{MuZ}, has
vertex $P_{n-4}$ and trivial source. Thus we have $P_{n-4}\leq R$.

\medskip
Let first $n_l>2$, or $n_l=2$ and $l\geq 3$; recall that $l\geq 2$ anyway.
Then, in all these cases we have $P_{n_1}\leq P_{n-4}\leq R$,
implying that $R^{p_1q_1}=\langle w_{n_1,1}\rangle$. 
Then by Remark \ref{rem:formerly51}, part (b),
there is precisely
one indecomposable direct summand $U$ in the decomposition
which is not annihilated by $x_{n_1}^+$.
Thus $(FX_{n_1}\oplus FX_{n_1})|\res_{X_{n_1}}^{R}(U)$ implies
$\dim(U)>\dim(D)/2$, a contradiction.

\medskip
Let now $n_l=2$ and $l=2$.
Then using $\Phi(P_n)=\Phi(P_{n_1})$ and
$P_{\frac{n_1}{2}}\leq P_{n-4}$,
Proposition \ref{prop:basis} yields
$B_n=B_{n_1}=P_{\frac{n_1}{2}}\Phi(P_{n_1})\leq P_{n-4}\Phi(P_n)\leq R$.
Since $P_n/B_n$ is elementary abelian of order 4, there are precisely three
maximal subgroups of $P_n$ containing $B_n$, these are
$R_1:=B_n\langle y_{n_1}'\rangle$ and
$R_2:=B_n\langle y_{n_1}\rangle=P_{n_1}$ and 
$R_3:=B_n\langle w_{2,2}\rangle=B_n\times P_2$.
With the notation as in the proof of
Proposition \ref{prop:l2nl2}, we have $R_i\cap {\mathfrak{A}_n}=R_i'$ for $i\in\{1,2,3\}$.
Hence from the proof of Proposition \ref{prop:l2nl2}
we infer that $D$ restricts indecomposably to $R_1$ and $R_2$.


\medskip
It remains to show that $D$ is not relatively $R_3$-projective.
As in the proof of Proposition \ref{prop:l2nl2}, it suffices 
to show that $D$ restricts indecomposably to 
$H:=\mathfrak{S}_{\frac{n_1}{2}}\times\mathfrak{S}_{\frac{n_1}{2}}
\times\mathfrak{S}_2$.
Namely, then $D$ also restricts indecomposably to 
$\hat{H}:=(\mathfrak{S}_{\frac{n_1}{2}}\wr\mathfrak{S}_{2})
           \times\mathfrak{S}_2$. 
Of course $D$ is relatively $\hat{H}$-projective, hence $D$ and 
$\res_{\hat{H}}^{\mathfrak{S}_n}(D)$ have a common vertex. 
If $D$ were relatively $R_3$-projective then 
$\res_{\hat{H}}^{\mathfrak{S}_n}(D)$ 
would be relatively $H$-projective, a contradiction since $|\hat{H}:H|=2$.
But now the proof of Proposition \ref{prop:l2nl2} shows that
$\tilde{\mathcal{E}}=\End_{FH}(D)$ is a local $F$-algebra so that 
$\res_{H}^{\mathfrak{S}_n}(D)$ indeed is indecomposable. 
\end{proof}


\medskip
{\sc S.D.:
Mathematical Institute, University of Oxford \\
24-29 St Giles', Oxford, OX1 3LB, UK} \\
{\sf danz@maths.ox.ac.uk}

\medskip
{\sc J.M.:
Lehrstuhl D f\"ur Mathematik, RWTH Aachen \\
Templergraben 64, D-52062 Aachen, Germany} \\
{\sf Juergen.Mueller@math.rwth-aachen.de}

\end{document}